\pgfplotsset{width=9cm,compat=1.12}
    \definecolor{ocre}{RGB}{0,96,128}
    \definecolor{purp}{RGB}{112,0,112}
    \pgfplotsset{
    	colormap={ocrefade}{
    		rgb255=(150,216,255)
    		rgb255=(0,64,96)
    	}
    }
\newtheorem{conjecture}{Conjecture}
\newtheorem{definition}{Definition}
\newtheorem{lemma}{Lemma}
\newtheorem{theorem}{Theorem}
\newtheorem{corollary}{Corollary}
\newtheorem{fact}{Fact}
\newtheorem{proposition}{Proposition}
\def\C{\mathbb{C}}
\def\R{\mathbb{R}}
\def\Z{\mathbb{Z}}
\begin{document}
\title{Laplacian $\{-1,0,1\}$- and $\{-1,1\}$-diagonalizable graphs}

\author{
	Nathaniel Johnston\textsuperscript{1} and Sarah Plosker\textsuperscript{2}
}

\maketitle

\begin{abstract}
    A graph is called \emph{Laplacian integral} if the eigenvalues of its Laplacian matrix are all integers. We investigate the subset of these graphs whose Laplacian is furthermore diagonalized by a matrix with entries coming from a fixed set, in particular, the sets $\{-1,0,1\}$ or $\{-1,1\}$. Such graphs include as special cases the recently-investigated families of \emph{Hadamard-diagonalizable} and \emph{weakly Hadamard-diagonalizable} graphs. As a combinatorial tool to aid in our investigation, we introduce a family of vectors that we call \emph{balanced}, which generalizes totally balanced partitions, regular sequences, and complete partitions. We show that balanced vectors completely characterize which graph complements and complete multipartite graphs are $\{-1,0,1\}$-diagonalizable, and we furthermore prove results on diagonalizability of the Cartesian product, disjoint union, and join of graphs. Particular attention is paid to the $\{-1,0,1\}$- and $\{-1,1\}$-diagonalizability of the complete graphs and complete multipartite graphs. Finally, we provide a complete list of all simple, connected graphs on nine or fewer vertices that are $\{-1,0,1\}$- or $\{-1,1\}$-diagonalizable.
    \medskip

    \noindent \textbf{Keywords:}  Hadamard matrices, Hadamard diagonalizable graphs, Laplacian integral graphs, eigenspaces\\
	
	\noindent \textbf{MSC2010 Classification:}  
  05C50; 
  15A18  
\end{abstract}

\addtocounter{footnote}{1}
\footnotetext{Department of Mathematics \& Computer Science, Mount Allison University, Sackville, NB, Canada E4L 1E4}
\addtocounter{footnote}{1}
\footnotetext{Department of Mathematics \& Computer Science, Brandon University, Brandon, MB, Canada R7A 6A9}

\section{Introduction}\label{sec:intro}

In spectral graph theory, the goal is to identify how the eigenvalues and/or eigenvectors of a graph's adjacency and/or Laplacian matrix relate to properties of the graph itself \cite{BH11}. In recent years, a significant amount of attention in this area has been given to graphs with eigenspaces that are spanned by eigenvectors with restricted entries. For example, numerous papers have explored eigenspaces of graphs that are spanned entirely by vectors whose entries come from the set $\{-1,0,1\}$ \cite{AAG06,San08}, while others have explored graphs that are diagonalizable by a Hadamard matrix (and thus have eigenspaces spanned entirely by vectors whose entries come from the set $\{-1,1\}$) \cite{BFK11,Breen22}. Sometimes orthogonality is expected between these eigenvectors, while other times only a weaker condition like linear independence or quasi-orthogonality \cite{adm2021weakly} is required.

One of the prototypical applications of such graphs arises in quantum information theory. In this setting, the concept of ``perfect state transfer'' (see \cite{ADLMTXZ16} and references therein) describes the ability to transfer a quantum state from one vertex of a graph to another reliably, which is required in order for quantum computers to function properly \cite{Bose}. Hadamard diagonalizability of a graph's Laplacian has been shown to be a useful tool when investigating whether or not it exhibits perfect state transfer \cite{johnston2017}, and more recently perfect state transfer in graphs whose Laplacian  can be diagonalized by a weak Hadamard matrix has been studied in \cite{MMP}.

In this paper, we explore which graphs have a Laplacian matrix that can be diagonalized by a $\{-1,0,1\}$- or $\{-1,1\}$-valued matrix. Unlike recent work on Hadamard-diagonalizable graphs and weakly Hadamard-diagonalizable graphs, we do not enforce any orthogonality or quasi-orthogonality conditions on the $\{-1,0,1\}$- or $\{-1,1\}$-valued matrices. Instead, we introduce the concepts of $\{-1,0,1\}$-bandwidth and $\{-1,1\}$-bandwidth of a graph, which generalize and unify these different notions of orthogonality. For example, a graph is Hadamard-diagonalizable if and only if it has $\{-1,1\}$-bandwidth $1$ and it is weakly Hadamard-diagonalizable if and only if it has $\{-1,0,1\}$-bandwidth at most $2$ (see Section~\ref{sec:bandwidth} for details).

As a combinatorial tool that we believe is of independent interest, we introduce a family of vectors that we call \emph{balanced}. These vectors generalize numerous other combinatorial objects that have been studied previously:
\begin{itemize}
    \item recursively balanced partitions, which were introduced for their connections to weakly Hadamard diagonalizable graphs \cite{adm2021weakly};

    \item regular sequences, which have applications particularly in statistics \cite{FO89}; and

    \item complete partitions, which are partitions of a given number that contain, as subsets, partitions of all smaller numbers \cite{Par98}.
\end{itemize}
We show that balanced vectors are at the heart of $\{-1,0,1\}$-diagonalizability, as they completely determine whether or not the complement of a $\{-1,0,1\}$-diagonalizable graph is $\{-1,0,1\}$-diagonalizable (see Theorem~\ref{thm:complement_of_neg_zero_one}) and which complete multipartite graphs are $\{-1,0,1\}$-diagonalizable (see Corollary~\ref{cor:complete_multipartite_graph}).

\subsection{Organization of the paper}\label{sec:organization}

We start in Section~\ref{sec:prelim} by presenting the necessary mathematical preliminaries and definitions,  
and introducing the notation used throughout the paper. In Section~\ref{sec:balanced_vecs}, we define and explore the concept of balanced vectors. 
In Section~\ref{sec:wHdiagGraphs}, we study $\{-1,0,1\}$- and $\{-1,1\}$-diagonalizability of complete graphs, complete multipartite graphs, complements of $\{-1,0,1\}$- and $\{-1,1\}$-diagonalizable graphs, and joins of $\{-1,0,1\}$- and $\{-1,1\}$-diagonalizable graphs. In particular, we show that this type of diagonalizability is closely connected with the well-known Hadamard conjecture, a natural $\{-1,0,1\}$-valued variant of it, and the balanced vectors that were introduced earlier. We furthermore provide, in Section~\ref{sec:small_graphs}, an exhaustive list of all simple graphs on nine or fewer vertices that are $\{-1,0,1\}$-diagonalizable, as well their $\{-1,0,1\}$- and $\{-1,1\}$-bandwidths. 
In Section~\ref{sec:other_set_S} we generalize our results and definitions to sets other than $\{-1,0,1\}$ and $\{-1,1\}$, and finally we close in Section~\ref{sec:conclusions} with some open questions.

\section{Preliminaries and definitions}\label{sec:prelim}

We use bold lowercase letters like $\mathbf{v}, \mathbf{w}, \ldots$ to denote vectors in $\R^n$ or $\C^n$, $\mathbf{e_j}$ to denote the $j$-th standard basis vector (i.e., the vector with $1$ in its $j$-th entry and $0$ elsewhere), and $\mathbf{1}_n$ to denote the $n$-entry all-ones vector (or just $\mathbf{1}$ if the number of entries is unimportant or clear from context). We use $\Z^p_{+}$ to denote vectors of length $p\in \mathbb \Z_{+}$ having all (strictly) positive integer entries. We denote the set of all $m\times n$ real-valued matrices by $\mathcal{M}_{m,n}$, or simply $\mathcal{M}_n$ in the case when $m=n$, and we use upper case letters like $A, B, \ldots$ to denote matrices. The $n \times n$ identity matrix is denoted by $I_{n}$ (or simply $I$ if its size is unimportant or clear from context) and the all-ones matrix is denoted by $J$.

A superscript $T$ denotes the transpose of a vector or matrix, although we will often freely associate between vectors viewed as columns and vectors viewed as rows. We denote the set of $n \times n$ positive semi-definite (PSD) real matrices by $\mathcal{M}_n^+$ and we require symmetry of all PSD matrices. We use non-bold lowercase letters with subscripts to denote specific entries of vectors and matrices: $v_j$ is the $j$-th entry of $\mathbf{v}$ and $a_{i,j}$ is the $(i,j)$-entry of $A$.

We exclusively consider simple graphs: unweighted, undirected graphs with no loops. If $G$ is a simple graph on $n$ vertices then its \emph{adjacency matrix} is the symmetric matrix $A := [a_{i,j}] \in \mathcal{M}_n$ that has $a_{i,j} = 1$ if there is an edge between vertices $i$ and $j$ and $a_{i,j} = 0$ otherwise. The \emph{degree matrix} $D \in \mathcal{M}_n$ is a diagonal matrix whose diagonal entry $d_{i,i}$ ($1 \leq i \leq n$) equals the \emph{degree} of the vertex $i$ (i.e., the number of edges incident to vertex $i$). The graph $G$ is \emph{regular} if $d_{i,i}$ is constant for all $i$. The \emph{Laplacian matrix} of $G$ is the symmetric matrix $L := D - A$.
 
The Laplacian matrix $L$ of a simple graph $G$ has numerous useful properties:
\begin{itemize}
    \item $L$ is diagonally dominant and thus positive semidefinite;

    \item $L$ has row sums equal to $0$. In other words, if $\mathbf{0},\mathbf{1} \in \R^n$ are the all-zeros and all-ones vectors, respectively, then $L\mathbf{1} = \mathbf{0}$, so $\mathbf{1}$ is an eigenvector of $L$ with corresponding eigenvalue $0$; and

    \item the multiplicity of the eigenvalue $0$ is exactly the number of connected components of $G$. In particular, $G$ is connected if and only if the eigenspace corresponding to the eigenvalue $0$ is $\mathrm{span}(\mathbf{1})$.
\end{itemize}

\subsection{Laplacian integral and Hadamard-diagonalizable graphs}\label{sec:laplacian_integral}

A graph is called \emph{Laplacian integral} if all of the eigenvalues of its Laplacian matrix are integers (see \cite{Grone94,Grone08,Kirk} and the references therein). Numerous important families of graphs are Laplacian integral, including all cographs \cite{Mer98} (i.e., graphs generated from a single vertex along with the operations of graph complement and disjoint union) and thus their sub-families of complete multipartite graphs \cite{zhao} and threshold graphs. If a simple graph is Laplacian integral then its eigenvectors can be chosen to have integer entries as well, since the entries of $L-\lambda I$ are all integers and thus the equation $(L-\lambda I)\mathbf{x}=\mathbf{0}$ has a solution with $\mathbf{x} \in \mathbb{Z}^n$.

We are often interested in which Laplacian integral graphs have eigenvectors whose entries can be further restricted to belong to the set $\{-1,0,1\}$ or even $\{-1,1\}$:

\begin{definition}\label{defn:S_diag}
    We say that a graph $G$ is \emph{$\{-1,0,1\}$-diagonalizable} (\emph{$\{-1,1\}$-diagonalizable}) if there is a basis of eigenvectors for the Laplacian $L$ of $G$ whose entries all belong to $\{-1,0,1\}$ ($\{-1,1\}$). Equivalently, $G$ is $\{-1,0,1\}$-diagonalizable ($\{-1,1\}$-diagonalizable) if there exists a matrix $P$, whose entries all belong to $\{-1,0,1\}$ ($\{-1,1\}$), with the property that $P^{-1}LP$ is diagonal.
\end{definition}

Some easily-provable properties of $\{-1,0,1\}$- and $\{-1,1\}$-diagonalizable graphs include:
\begin{itemize}
    \item If $G$ is a simple $\{-1,1\}$-diagonalizable graph then all eigenvalues of its Laplacian matrix $L$ are even integers. This can be proved via an argument that is identical to that of \cite[Theorem~5]{BFK11}.
    
    \item If $G$ is a simple connected $\{-1,0,1\}$-diagonalizable graph then orthogonality of the eigenspaces of real symmetric matrices implies that all $\{-1,0,1\}$-eigenvectors of $L$ other than $\pm \mathbf{1}$ have the same number of entries equal to $1$ as $-1$.
\end{itemize}

The set of graphs that are $\{-1,1\}$-diagonalizable contains the set of Hadamard-diagonalizable graphs as a subset: these are graphs $G$ whose Laplacian matrix $L$ can be diagonalized by a Hadamard matrix $H$ (i.e., a matrix whose entries all belong to $\{-1,1\}$ and whose columns are mutually orthogonal). Similarly, graphs that are $\{-1,0,1\}$-diagonalizable contain, as a sub-family, graphs that are weakly Hamadard-diagonalizable \cite{adm2021weakly}:

\begin{definition}\label{defn:weak_hadamard}
    A matrix $P \in \mathcal{M}_n$ is a \emph{weak Hadamard matrix} if all of its entries belong to $\{-1,0,1\}$ and $P^TP$ is a tridiagonal matrix. A graph $G$ is \emph{weakly Hadamard-diagonalizable (WHD)} if its Laplacian matrix $L$ can be diagonalized by a weak Hadamard matrix (i.e., $P^{-1}LP$ is diagonal for some weak Hadamard matrix $P$).
\end{definition}

\subsection{Bandwidth and (quasi-)orthogonality}\label{sec:bandwidth}

The tridiagonal requirement of weak Hadamard matrices from Definition~\ref{defn:weak_hadamard} relaxes the requirement of (non-weak) Hadamard matrices that their columns be mutually orthogonal. In particular, $P^TP$ being tridiagonal is equivalent to the columns $\mathbf{v_1}$, $\mathbf{v_2}$, $\ldots$ , $\mathbf{v_n}$ of $P$ being \emph{quasi-orthogonal}: $\mathbf{v_i} \cdot \mathbf{v_j} = 0$ whenever $|i-j| \geq 2$.

Recall that the \emph{bandwidth} of a matrix $A$ is the smallest $k \in \mathbb{Z}_+$ with the property that $a_{i,j} = 0$ whenever $|i-j| \geq k$ (so, for example, diagonal matrices have bandwidth $1$ and tridiagonal matrices have bandwidth $2$) \cite[Chapter 7.2]{bandwidth}.\footnote{Some sources define the bandwidth of a matrix to be $1$ less than the definition used herein, so that diagonal and tridiagonal matrices have bandwidth $0$ and $1$, respectively.} It is then natural to define the bandwidth of a graph's diagonalization as follows:

\begin{definition}\label{def:bandwidth}
    Let $G$ be a graph with Laplacian matrix $L$. 
    \begin{enumerate}
        \item[a)] If $G$ is not $\{-1,0,1\}$-diagonalizable ($\{-1,1\}$-diagonalizable) then the \emph{$\{-1,0,1\}$-bandwidth} (\emph{$\{-1,1\}$-bandwidth}) of $G$ is $\infty$.
        
        \item[b)] Otherwise, the \emph{$\{-1,0,1\}$-bandwidth} (\emph{$\{-1,1\}$-bandwidth}) of $G$ is the smallest $k \in \mathbb{Z}_+$ for which there exists a matrix $P$ with all of the following properties:
        \begin{itemize}
            \item all of its entries belong to $\{-1,0,1\}$ ($\{-1,1\}$);

            \item $P^{-1}LP$ is diagonal; and

            \item $P^TP$ has bandwidth $k$.
        \end{itemize}
    \end{enumerate}
\end{definition}

It is immediate from this definition that if an $n$-vertex graph has finite $\{-1,0,1\}$- or $\{-1,1\}$-bandwidth then that bandwidth is at most $n$. In fact, we can do slightly better by recalling that if a graph has $c \geq 1$ connected components then its Laplacian matrix has at least two distinct eigenvalues ($0$ and a strictly positive number), and the corresponding eigenspaces are orthogonal to each other. It follows that the $\{-1,0,1\}$- and $\{-1,1\}$-bandwidths of the graph are at most $\max\{c,n-c\}$. We will see examples shortly of graphs whose bandwidths attain this bound, at least for certain choices of $c$ and $n$.

It is also immediate from Definition~\ref{def:bandwidth} that a graph has $\{-1,1\}$-bandwidth equal to $1$ if and only if it is Hadamard diagonalizable, and it has $\{-1,0,1\}$-bandwidth at most $2$ if and only if it is weakly Hadamard diagonalizable. Many of the results about weakly Hadamard diagonalizable graphs from \cite{adm2021weakly} still hold for arbitrary $\{-1,0,1\}$- and $\{-1,1\}$-bandwidths. Indeed, many of the proofs from that paper do not make use of quasi-orthogonality of the Laplacian's eigenvectors, so the same proofs establish the following facts:

\begin{proposition}\label{prop:neg101properties}
    Let $G$ and $H$ be graphs with $m$ and $n$ vertices, respectively.
   \begin{enumerate}
       \item[a)] If $G$ is connected and $\{-1,0,1\}$-diagonalizable ($\{-1,1\}$-diagonalizable) with bandwidth $k$ then its complement $G^{\textup{c}}$ is $\{-1,0,1\}$-diagonalizable ($\{-1,1\}$-diagonalizable) with bandwidth at most $k$ (c.f.\ \cite[Lemma 2.4]{adm2021weakly}).
       
       \item[b)] If $G$ and $H$ are $\{-1,0,1\}$-diagonalizable with bandwidths $k$ and $\ell$, respectively, then the disjoint union $G \sqcup H$ is $\{-1,0,1\}$-diagonalizable with bandwidth $\max\{k,\ell\}$ (c.f.\ \cite[Lemma 2.3]{adm2021weakly}).
       
       \item[c)] If $G$ and $H$ are $\{-1,0,1\}$-diagonalizable ($\{-1,1\}$-diagonalizable) with bandwidths $k$ and $\ell$, respectively, then their Cartesian product $G \mathbin{\square} H$ is $\{-1,0,1\}$-diagonalizable ($\{-1,1\}$-diagonalizable) with bandwidth at most $\min\{n(k-1)+\ell,m(\ell-1)+k\}$ (c.f.\ \cite[Proposition 3.3]{adm2021weakly}).
   \end{enumerate} 
\end{proposition}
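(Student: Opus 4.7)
My plan is to prove each part by explicitly constructing a diagonalizing matrix $P$ for the new graph from diagonalizers of the constituent graphs and then bounding the bandwidth of $P^{T}P$; since the analogous proofs in \cite{adm2021weakly} do not actually invoke quasi-orthogonality, I expect to adapt them with minimal changes. For (a), I would start from the identity $L_{G^{c}} = nI - J - L_{G}$, where $J$ is the all-ones matrix. Given a $\{-1,0,1\}$-valued (respectively $\{-1,1\}$-valued) diagonalizer $P$ of $L_{G}$ with $P^{T}P$ of bandwidth $k$, connectedness of $G$ forces $\ker L_{G}=\mathrm{span}(\mathbf{1})$, so exactly one column of $P$ is a nonzero $\{-1,0,1\}$-valued scalar multiple of $\mathbf{1}$ and hence equals $\pm\mathbf{1}$. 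Every other column of $P$, being an eigenvector for a nonzero eigenvalue of the symmetric matrix $L_{G}$, is orthogonal to $\mathbf{1}$ and so lies in $\ker J$. A direct calculation then shows that the same $P$ diagonalizes $L_{G^{c}}$, with the zero eigenvalue preserved and each nonzero eigenvalue $\lambda$ of $L_{G}$ mapped to $n-\lambda$. Since $P$ is unchanged, so is $P^{T}P$, and the bandwidth remains at most $k$.

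For (b), the Laplacian of a disjoint union is block diagonal, $L_{G\sqcup H}=L_{G}\oplus L_{H}$, so I take $P=P_{G}\oplus P_{H}$; then $P^{T}P=(P_{G}^{T}P_{G})\oplus (P_{H}^{T}P_{H})$ is also block diagonal, with bandwidth $\max\{k,\ell\}$. For (c), the standard identity $L_{G\square H}=L_{G}\otimes I_{n}+I_{m}\otimes L_{H}$ makes $P=P_{G}\otimes P_{H}$ a natural diagonalizer, because Kronecker products of eigenvectors are eigenvectors (with eigenvalues given by the sums), and the entries remain in $\{-1,0,1\}$ (respectively $\{-1,1\}$). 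The key step is the bandwidth bound on $(P_{G}^{T}P_{G})\otimes(P_{H}^{T}P_{H})$: indexing rows and columns by pairs $(i,j)$ in lexicographic order via the linear index $(i-1)n+j$, an entry can be nonzero only when $|i-i'|\le k-1$ and $|j-j'|\le \ell-1$, in which case the linear distance $|(i-i')n+(j-j')|$ is at most $(k-1)n+(\ell-1)$; this gives bandwidth at most $n(k-1)+\ell$. Reordering the columns of $P$ via the alternative lexicographic ordering (equivalently, using $P_{H}\otimes P_{G}$ after a perfect-shuffle permutation of rows, which leaves $P^{T}P$ unchanged as a Gram matrix up to a simultaneous row/column permutation) yields the symmetric bound $m(\ell-1)+k$, and the minimum of these two is the claimed bound.

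The main obstacle I anticipate is in part (c): I want to justify carefully that both orderings of the columns of $P$ really produce valid diagonalizers of the same Laplacian matrix $L_{G\square H}$ (up to relabeling of vertices, using $G\square H \cong H\square G$), so that taking the minimum of the two bandwidth bounds is truly attainable by a single graph. Parts (a) and (b) look essentially routine once one observes that quasi-orthogonality of the columns is never invoked in the corresponding proofs of \cite{adm2021weakly}, so those constructions should transfer verbatim to our more general bandwidth setting.
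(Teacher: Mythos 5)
Your proposal is correct and follows essentially the same route as the paper: the paper proves this via its more general version (Proposition~\ref{prop:neg101propertiesS}), using exactly your three constructions --- the same diagonalizer $P$ for $(nI - \mathbf{1}\mathbf{1}^T) - L$ in (a), the block-diagonal matrix in (b), and the Kronecker product $P\otimes Q$ in (c) --- with the bandwidth bound $\min\{n(k-1)+\ell,\, m(\ell-1)+k\}$ justified by the same Kronecker-index computation you give (stated in the paper's surrounding discussion rather than in the proof itself).
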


We do not prove this proposition, since we present a more general version of it later (as Proposition~\ref{prop:neg101propertiesS}) that we do prove.

There are some technicalities concerning each part of the above proposition that we should be careful to clarify. It is tempting to conjecture that the bandwidth does not change when taking the complement in property~(a), but it actually can strictly decrease. For example, the $\{-1,0,1\}$-bandwidth of the (connected) complete multipartite graph $K_{4,1,1,1,1,1}$ is $2$, but the $\{-1,0,1\}$-bandwidth of its (disconnected) complement $K_{4,1,1,1,1,1}^{\textup{c}} = K_4 \sqcup K_1 \sqcup K_1 \sqcup K_1 \sqcup K_1 \sqcup K_1$ is $1$. Furthermore, property~(a) relies crucially on $G$ being connected: if it is disconnected then its complement might not be $\{-1,0,1\}$-diagonalizable. For example, $K_2 \sqcup K_1$ is $\{-1,0,1\}$-diagonalizable, but its complement $(K_2 \sqcup K_1)^{\textup{c}} = K_{2,1}$ is not. We return to this problem in Section~\ref{sec:complete_multipartite}.

The fact that property~(b) does not mention $\{-1,1\}$-diagonalizability is not an accident: the disjoint union preserves $\{-1,0,1\}$-diagonalizability but not $\{-1,1\}$-diagonalizability. For example, $K_2$ and $K_1$ are each $\{-1,1\}$-diagonalizable but $K_2 \sqcup K_1$ is not.

In property~(c), the strange quantity $\min\{n(k-1)+\ell,m(\ell-1)+k\}$ comes from the fact that if $A \in \mathcal{M}_m$ and $B \in \mathcal{M}_n$ are matrices with bandwidth $k$ and $\ell$, respectively, then $A \otimes B$ has bandwidth at most $n(k-1)+\ell$ and $B \otimes A$ has bandwidth at most $m(\ell-1)+k$.

If $k = 1$ and $\ell = 2$ then property~(c) says that $G \mathbin{\square} H$ has $\{-1,0,1\}$-bandwidth at most $2$ (i.e., $G \mathbin{\square} H$ is weakly Hadamard diagonalizable), thus recovering one of the statements of \cite[Proposition~3.3]{adm2021weakly}. The other two statements of that proposition (i.e., that the direct product and strong product of $G$ and $H$ are also weakly Hadamard diagonalizable) are actually false: $K_{2,1,1}$ has $\{-1,0,1\}$-bandwidth equal to $1$, but its strong and direct products with itself are not even Laplacian integral, let alone $\{-1,0,1\}$-diagonalizable or weakly Hadamard diagonalizable.\footnote{The statements of \cite[Proposition~3.3]{adm2021weakly} concerning the strong and direct products are true if the hypothesis ``regular'' is added to the statement of the proposition, which is likely what was intended.}

\section{Balanced vectors}\label{sec:balanced_vecs}

Here, we introduce the concept of balanced vectors, that will let us characterize $\{-1,0,1\}$-diagonalizability of certain families of graphs:

\begin{definition}\label{defn:totally_balanced}
    A vector $\mathbf{v} \in \R^p$ ($p \geq 2$) is \emph{balanced} if there exists a matrix $A \in \mathcal{M}_{p-1,p}$, all of whose entries belong to $\{-1,0,1\}$, with $\mathrm{null}(A) = \mathrm{span}(\mathbf{v})$.
\end{definition}
 
A vector $\mathbf{v}$ being balanced is equivalent, in the language of \cite[Proposition~4.2]{adm2021weakly},\footnote{In particular, this means that balanced vectors generalize \emph{recursively balanced partitions} from that paper. The fact that $(1,1,1)$ is balanced but not recursively balanced shows that this generalization is strict.} to there being $p-1$ equations of the form
\[
    v_{i_1} + v_{i_2} + \cdots + v_{i_r} = v_{j_1} + v_{j_2} + \cdots + v_{j_s}.
\]
These $p-1$ equations can be obtained by letting $i_1$, $i_2$, $\ldots$ , $i_r$ be the indices of the ``$-1$'' entries in a particular row of the matrix $A$, and letting $j_1$, $j_2$, $\ldots$ , $j_s$ be the indices the of the ``$+1$'' entries in that row. That is, for each row of $A$, the sum of the entries of $\mathbf{v}$ corresponding to the $-1$ entries in a particular row of $A$ is equal to the sum of the entries of $\mathbf{v}$ corresponding to the $1$ entries in that row. 

Implicit in Definition~\ref{defn:totally_balanced} is that the rank of $A$ must be $p-1$, its nullity must be $1$, and balanced vectors are all non-zero. We can also, without loss of generality, assume many other nice properties of balanced vectors. For example, whether or not a vector is balanced does not depend on the order of its entries, since any permutation that we apply to the entries of $\mathbf{v}$ can also be applied to the columns of the matrix $A$ from Definition~\ref{defn:totally_balanced}. We may thus without loss of generality assume that their entries are sorted in non-increasing order. Furthermore, the following proposition shows that we may assume without loss of generality that the entries of a balanced vector are integers:

\begin{proposition}\label{prop:tot_bal_rescale}
    Let $\mathbf{v} \in \R^p$ be a balanced vector. Then $c\mathbf{v}$ is also balanced for all non-zero $c \in \R$. Furthermore, there exists $c \in \R$ so that $c\mathbf{v} \in \Z^p$.
\end{proposition}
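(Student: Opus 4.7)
The statement has two parts, and both reduce to routine linear algebra once one observes that the certificate matrix $A$ in Definition~\ref{defn:totally_balanced} has integer entries.

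For the first claim, suppose $\mathbf{v}$ is balanced, and fix a matrix $A \in \mathcal{M}_{p-1,p}$ with entries in $\{-1,0,1\}$ such that $\mathrm{null}(A) = \mathrm{span}(\mathbf{v})$. For any nonzero $c \in \R$, the span is unchanged: $\mathrm{span}(c\mathbf{v}) = \mathrm{span}(\mathbf{v}) = \mathrm{null}(A)$. Hence the very same matrix $A$ witnesses that $c\mathbf{v}$ is balanced. This half is essentially a tautology from the definition.

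For the second claim, the plan is to exploit the fact that $A$ has entries in $\Z$. As noted in the paragraph following Definition~\ref{defn:totally_balanced}, $A$ necessarily has rank $p-1$ and nullity $1$. Since $A$ has integer (hence rational) entries, performing Gaussian elimination on $A$ over $\Q$ yields a reduced row echelon form with rational entries, from which one may read off a rational vector $\mathbf{u} \in \Q^p$ that spans $\mathrm{null}(A)$; equivalently, one can invoke Cramer's rule on a $(p-1)\times(p-1)$ invertible submatrix of $A$. Because both $\mathbf{v}$ and $\mathbf{u}$ span the same one-dimensional subspace, there exists nonzero $\alpha \in \R$ with $\mathbf{v} = \alpha\mathbf{u}$. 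Setting $c = 1/\alpha$ then gives $c\mathbf{v} = \mathbf{u} \in \Q^p$, and multiplying by the least common multiple of the denominators of the entries of $\mathbf{u}$ produces a further rescaling that lands in $\Z^p$.

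There is no real obstacle here: the argument is essentially a one-line appeal to the fact that the kernel of a rational matrix has a basis of rational vectors, combined with clearing denominators. The only thing worth flagging explicitly is why $\mathrm{null}(A)$ is one-dimensional (so that the rational null vector is forced to be a scalar multiple of $\mathbf{v}$), which is already observed in the paragraph preceding the proposition.
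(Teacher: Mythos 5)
Your proposal is correct and follows essentially the same route as the paper: the first claim is immediate because $\mathrm{span}(c\mathbf{v})=\mathrm{span}(\mathbf{v})=\mathrm{null}(A)$, and the second claim follows by row-reducing the integer matrix $A$ over $\Q$ to extract a rational spanning vector of the one-dimensional null space and then clearing denominators. Your explicit remark that the rational null vector must be a scalar multiple of $\mathbf{v}$ (since the nullity is $1$) is a nice touch, but the argument is the same as the paper's.
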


\begin{proof}
    The fact that $c\mathbf{v}$ is also balanced follows from the fact that the null space of every matrix is a subspace, so $\mathbf{v} \in \mathrm{null}(A)$ implies $c\mathbf{v} \in \mathrm{null}(A)$. Furthermore, since $A$ has full rank $p-1$, each row of the row reduced echelon form $R$ of $A$ has $p-2$ entries equal to $0$, one entry equal to $1$, and one entry that is some rational number (potentially equal to $0$ or $1$). It follows that we can find a non-zero vector in $\mathrm{null}(A) = \mathrm{null}(R)$ with all entries integers no larger than the least common multiple of the denominators of entries of $R$.
\end{proof}

We can also assume without loss of generality that the entries of a balanced vector $\mathbf{v}$ are all strictly positive. To see this, notice that multiplying the $j$-th column of the matrix $A$ from Definition~\ref{defn:totally_balanced} by $-1$ results in a new balanced vector with its $j$-th entry multiplied by $-1$. Similarly, adding or removing a ``$0$'' entry to or from a balanced vector results in another balanced vector; simply add or remove a single row and column to or from the matrix $A$ from Definition~\ref{defn:totally_balanced} in the obvious way.\footnote{To make this operation still preserve balancedness of vectors when $p = 2$, we extend Definition~\ref{defn:totally_balanced} to the $p = 1$ case by simply specifying that $v \in \mathbb{R}^1$ is balanced if and only if it is non-zero.} We summarize all of these ``without loss of generality'' observations in the following fact:

\begin{fact}\label{fact:totally_balanced_wlog}
    The following operations all preserve balancedness of a vector $\mathbf{v} \in \mathbb{R}^p$:
    \begin{itemize}
        \item permuting the entries of $\mathbf{v}$;

        \item replacing $\mathbf{v}$ by $c\mathbf{v}$ for some $0 \neq c \in \mathbb{R}$;

        \item changing the sign of some entries of $\mathbf{v}$; and
        
        \item adding or removing $r$ number of ``$0$'' entries to $\mathbf{v}$, in which case the resulting vector is in $\mathbb R^{p\pm r}$.
    \end{itemize}
    Furthermore, every balanced vector in $\mathbb{R}^p$ can be obtained via these operations from a non-increasing positive integer-valued balanced vector (i.e., a balanced vector with integer entries satisfying $v_1 \geq v_2 \geq \cdots \geq v_p \geq 1$).
\end{fact}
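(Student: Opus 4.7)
The plan is to justify each of the four bulleted operations by directly modifying the witnessing matrix $A \in \mathcal{M}_{p-1,p}$ from Definition~\ref{defn:totally_balanced}, and then to chain these operations together to obtain the ``furthermore'' statement. Scaling by a nonzero scalar is already handled by Proposition~\ref{prop:tot_bal_rescale}, so only three operations remain.

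For permutation and sign change, the transformation on $\mathbf{v}$ can be written as left-multiplication by an invertible matrix $M$ of the form $DP$, where $D$ is diagonal with entries in $\{-1,1\}$ and $P$ is a permutation matrix. Given $A$ with $\mathrm{null}(A) = \mathrm{span}(\mathbf{v})$, the matrix $A' := AM^{-1}$ still has entries in $\{-1,0,1\}$ (since right-multiplication by $M^{-1}$ merely permutes columns and flips signs) and satisfies $\mathrm{null}(A') = M\cdot\mathrm{null}(A) = \mathrm{span}(M\mathbf{v})$, establishing that $M\mathbf{v}$ is balanced.

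The one step requiring a small linear-algebra argument is the addition or removal of a zero entry; by iteration it suffices to handle a single zero. Adding one is easy: given a witness $A$ for $\mathbf{v}$, I would append a zero column to $A$ and then append a new row equal to the last standard basis vector, producing an $\{-1,0,1\}$-valued matrix in $\mathcal{M}_{p,p+1}$ of full row rank $p$ whose nullspace is spanned by $(\mathbf{v},0)^T$. For removing a zero entry $v'_j=0$ from a balanced $\mathbf{v}' \in \R^{p+1}$ with witness $A'\in \mathcal{M}_{p,p+1}$, I would delete column $j$ of $A'$ to obtain $\tilde A \in \mathcal{M}_{p,p}$, which still annihilates the shortened vector $\mathbf{v}$. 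Deleting a single column drops rank by at most $1$, so $\mathrm{rank}(\tilde A) \geq p-1$, while $\tilde A$ cannot be invertible since $\mathbf{v}\neq \mathbf{0}$ lies in its null space; hence $\mathrm{rank}(\tilde A)=p-1$. Deleting any linearly dependent row of $\tilde A$ then yields a matrix $A \in \mathcal{M}_{p-1,p}$ of rank $p-1$ whose nullspace is $\mathrm{span}(\mathbf{v})$.

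For the ``furthermore'' conclusion, starting from any balanced $\mathbf{v} \in \R^p$ I would first rescale (Proposition~\ref{prop:tot_bal_rescale}) to make the entries integers, flip signs to make all entries nonnegative, permute so the sequence is non-increasing, and remove any trailing zeros to arrive at a non-increasing positive integer-valued balanced vector $\mathbf{u}$. Each of the four operations has an inverse of the same type (permutations, sign flips, and zero insertions/removals are manifestly involutive or invertible, and scaling by $c$ is undone by scaling by $1/c$), so reversing the sequence recovers $\mathbf{v}$ from $\mathbf{u}$. The only mild obstacle is the zero-removal step above; everything else is direct matrix bookkeeping.
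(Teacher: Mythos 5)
Your proposal is correct and follows essentially the same route as the paper, which justifies this Fact informally by modifying the witnessing matrix $A$ from Definition~\ref{defn:totally_balanced}: permuting and sign-flipping its columns, invoking Proposition~\ref{prop:tot_bal_rescale} for scaling, and adding or removing a row and column for the zero entries. Your rank argument for the zero-removal step is a welcome bit of extra rigor that the paper glosses over with ``in the obvious way,'' but it is not a different approach; the only edge case worth a remark is removal down to $p=1$, which the paper handles by the footnoted convention that $v\in\mathbb{R}^1$ is balanced iff it is non-zero.
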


Because of the above fact, we typically only consider non-increasing balanced vectors in $\Z^p_{+}$ whose entries have greatest common divisor equal to $1$. For example, the only (up to scaling and re-ordering of entries) balanced vectors in $\Z^3_{+}$ are $(1,1,1)$ and $(2,1,1)$, which come from the null spaces of the $2 \times 3$ matrices
\[
    \begin{bmatrix}
        1 & -1 & 0 \\
        0 & 1 & -1
    \end{bmatrix} \quad \text{and} \quad \begin{bmatrix}
        1 & -1 & -1 \\
        0 & 1 & -1
    \end{bmatrix},
\]
respectively. Every other balanced vector in $\mathbb{R}^3$ can be obtained by applying some combination of the first three operations from Fact~\ref{fact:totally_balanced_wlog} to one of these two vectors, or by adding a ``$0$'' entry to a balanced vector from $\mathbb{R}^2$.

The following proposition gives another way to construct new balanced vectors from old ones:

\begin{proposition}\label{prop:concat_tb}
    Suppose $\mathbf{v} \in \R^p$ and $\mathbf{w} \in \R^q$ are balanced. If there exist vectors $\mathbf{x} \in \{-1,0,1\}^p$ and $\mathbf{y} \in \{-1,0,1\}^q$ with $\mathbf{v} \cdot \mathbf{x} = \mathbf{w} \cdot \mathbf{y} \neq 0$, then $(\mathbf{v},\mathbf{w})\in \mathbb R^{p+q}$ is balanced.
\end{proposition}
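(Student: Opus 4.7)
My plan is to build an explicit matrix $C \in \mathcal{M}_{p+q-1,\,p+q}$ with entries in $\{-1,0,1\}$ whose nullspace is exactly $\mathrm{span}((\mathbf{v},\mathbf{w}))$, thereby witnessing that $(\mathbf{v},\mathbf{w})$ is balanced per Definition~\ref{defn:totally_balanced}. Using the hypothesis, let $A \in \mathcal{M}_{p-1,p}$ and $B \in \mathcal{M}_{q-1,q}$ be $\{-1,0,1\}$-matrices with $\mathrm{null}(A)=\mathrm{span}(\mathbf{v})$ and $\mathrm{null}(B)=\mathrm{span}(\mathbf{w})$. The natural first attempt is the block-diagonal matrix $\left[\begin{smallmatrix} A & 0 \\ 0 & B \end{smallmatrix}\right]$, whose nullspace is the two-dimensional subspace $\mathrm{span}(\mathbf{v})\oplus \mathrm{span}(\mathbf{w}) = \{(a\mathbf{v},\, b\mathbf{w}): a,b \in \R\}$. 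I need exactly one more $\{-1,0,1\}$-row that cuts this down to the line through $(\mathbf{v},\mathbf{w})$, i.e., that forces $a=b$.

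This is where the hypothesis enters. I would append the row $(\mathbf{x}^T,\, -\mathbf{y}^T)$ to form
\[
    C \;=\; \begin{bmatrix} A & 0 \\ 0 & B \\ \mathbf{x}^T & -\mathbf{y}^T \end{bmatrix}.
\]
All entries are in $\{-1,0,1\}$, and $C$ has $p+q-1$ rows and $p+q$ columns as required. To verify $\mathrm{null}(C)=\mathrm{span}((\mathbf{v},\mathbf{w}))$, note any vector in $\mathrm{null}(C)$ must first lie in the nullspace of the block-diagonal part, hence have the form $(a\mathbf{v},\,b\mathbf{w})$. The extra row then imposes $a(\mathbf{v}\cdot\mathbf{x}) - b(\mathbf{w}\cdot\mathbf{y})=0$, and since $\mathbf{v}\cdot\mathbf{x}=\mathbf{w}\cdot\mathbf{y}\neq 0$, this collapses to $a=b$. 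Conversely, $(\mathbf{v},\mathbf{w})$ itself lies in $\mathrm{null}(C)$ for the same reason. Thus $\mathrm{null}(C)=\mathrm{span}((\mathbf{v},\mathbf{w}))$ and $(\mathbf{v},\mathbf{w})$ is balanced.

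The proof is essentially a dimension-counting argument, so there is no serious obstacle; the only subtlety is checking the edge cases $p=1$ or $q=1$ introduced by the paper's footnoted extension of the definition. In those cases the corresponding block ($A$ or $B$) is the empty $0\times 1$ matrix and the block-diagonal part contributes no rows on that side, but the same reasoning applies: for instance when $p=1$, the nullspace of the block-diagonal part is $\{(a,\,b\mathbf{w})\}$ and the extra row $(x,-\mathbf{y}^T)$ with $xv=\mathbf{y}\cdot\mathbf{w}\neq 0$ forces $a=bv$, giving $\mathrm{span}((v,\mathbf{w}))$ as desired. So the construction is uniform, and the non-vanishing of the common dot product value $\mathbf{v}\cdot\mathbf{x}=\mathbf{w}\cdot\mathbf{y}$ is precisely what guarantees the appended row is independent of the existing row space and that the forced linear relation is $a=b$ rather than something degenerate.
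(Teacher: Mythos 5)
Your construction is exactly the one the paper uses: append the row $(\mathbf{x}^T,-\mathbf{y}^T)$ to the block-diagonal matrix $\left[\begin{smallmatrix} A & O \\ O & B\end{smallmatrix}\right]$ and check that the nonzero common dot product forces the two scaling factors to agree. The argument is correct and matches the paper's proof of Proposition~\ref{prop:concat_tb}, with your verification of the nullspace (and the $p=1$ edge case) simply spelling out what the paper leaves as ``straightforward to verify.''
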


Before proving the above proposition, we note that the curious hypothesis involving the dot product is indeed required for the conclusion to hold. For example, both $\mathbf{v} = (3,3)$ and $\mathbf{w} = (1,1)$ are balanced, but $(\mathbf{v},\mathbf{w}) = (3,3,1,1)$ is not. The dot product hypothesis ensures that $\mathbf{v}$ and $\mathbf{w}$ have scaling that is not ``too far off'' from each other (and since re-scaling a balanced vector produces another balanced vector, this is not too much of a restriction). For example, if we choose $\mathbf{x}$ and $\mathbf{y}$ to be standard basis vectors in Proposition~\ref{prop:concat_tb} then we learn the following: if $\mathbf{v} \in \Z^p_{+}$ and $\mathbf{w} \in \Z^q_{+}$ are balanced and $v_i = w_j$ for some $i,j$, then $(\mathbf{v},\mathbf{w})$ is balanced.

\begin{proof}[Proof of Proposition~\ref{prop:concat_tb}]
    Since $\mathbf{v}$ and $\mathbf{w}$ are balanced, there exist matrices $A \in \mathcal{M}_{p-1,p}$ and $B \in \mathcal{M}_{q-1,q}$ with $\mathrm{null}(A) = \mathrm{span}(\mathbf{v})$ and $\mathrm{null}(B) = \mathrm{span}(\mathbf{w})$. If $\mathbf{x}$ and $\mathbf{y}$ are as in the statement of the proposition then it is straightforward to verify that when
    \[
        C := \begin{bmatrix}
            A & O \\
            O & B \\
            \mathbf{x}^T & -\mathbf{y}^T
        \end{bmatrix} \in \mathcal{M}_{p+q-1,p+q},
    \]
    we have $\mathrm{null}(C) = \mathrm{span}(\mathbf{v},\mathbf{w})$, so $(\mathbf{v},\mathbf{w})$ is balanced.
\end{proof}

We provide a complete list (up to scaling and re-ordering of entries) of balanced vectors in $\mathbb{Z}^p_{+}$ for $p \in \{2,3,4,5\}$ in Table~\ref{tab:totally_balanced_smallp}.

\begin{table}[!htb]
    \centering
    \begin{tabular}{cl}\toprule
        $p$ & vectors $\mathbf{v}$ \\\toprule
        $1$ & $(1)$ \\\midrule
        $2$ & $(1,1)$ \\\midrule
        $3$ & $(1,1,1)$, $(2,1,1)$ \\\midrule
        $4$ & $(1,1,1,1)$, $(2,1,1,1)$, $(2,2,1,1)$, $(3,1,1,1)$, $(3,2,1,1)$, $(3,2,2,1)$, $(4,2,1,1)$, $(4,3,2,1)$ \\\midrule
        $5$ & $(1,1,1,1,1)$, $(2,1,1,1,1)$, $(2,2,1,1,1)$, $(2,2,2,1,1)$, $(3,1,1,1,1)$, $(3,2,1,1,1)$, $(3,2,2,1,1)$, \\
        & $(3,2,2,2,1)$, $(3,3,1,1,1)$, $(3,3,2,1,1)$, $(3,3,2,2,1)$, $(3,3,2,2,2)$, $(4,1,1,1,1)$, $(4,2,1,1,1)$, \\
        & $(4,2,2,1,1)$, $(4,3,1,1,1)$, $(4,3,2,1,1)$, $(4,3,2,2,1)$, $(4,3,3,1,1)$, $(4,3,3,2,1)$, $(4,3,3,2,2)$, \\
        & $(4,4,2,1,1)$, $(4,4,3,2,1)$, $(4,4,3,3,2)$, $(5,2,1,1,1)$, $(5,2,2,1,1)$, $(5,2,2,2,1)$, $(5,3,1,1,1)$, \\
        & $(5,3,2,1,1)$, $(5,3,2,2,1)$, $(5,3,3,1,1)$, $(5,3,3,2,1)$, $(5,4,2,1,1)$, $(5,4,2,2,1)$, $(5,4,3,1,1)$, \\
        & $(5,4,3,2,1)$, $(5,4,3,2,2)$, $(5,4,3,3,1)$, $(5,4,3,3,2)$, $(5,4,4,2,1)$, $(5,4,4,3,2)$, $(6,2,2,1,1)$, \\
        & $(6,3,1,1,1)$, $(6,3,2,1,1)$, $(6,3,2,2,1)$, $(6,4,2,1,1)$, $(6,4,3,1,1)$, $(6,4,3,2,1)$, $(6,4,4,1,1)$, \\
        & $(6,4,4,3,1)$, $(6,5,2,2,1)$, $(6,5,3,1,1)$, $(6,5,3,2,1)$, $(6,5,3,2,2)$, $(6,5,4,2,1)$, $(6,5,4,3,1)$, \\
        & $(6,5,4,3,2)$, $(6,5,4,4,3)$, $(7,3,2,1,1)$, $(7,3,2,2,1)$, $(7,3,3,2,1)$, $(7,4,2,1,1)$, $(7,4,2,2,1)$, \\
        & $(7,4,3,2,1)$, $(7,4,4,2,1)$, $(7,5,3,1,1)$, $(7,5,3,2,1)$, $(7,5,3,3,1)$, $(7,5,4,1,1)$, $(7,5,4,2,1)$, \\
        & $(7,5,4,3,1)$, $(7,6,3,2,1)$, $(7,6,3,2,2)$, $(7,6,4,2,1)$, $(7,6,4,3,2)$, $(7,6,5,3,1)$, $(7,6,5,3,2)$, \\
        & $(7,6,5,4,2)$, $(7,6,5,4,3)$, $(8,3,2,2,1)$, $(8,4,2,1,1)$, $(8,4,3,2,1)$, $(8,4,3,3,2)$, $(8,5,2,2,1)$, \\
        & $(8,5,4,2,1)$, $(8,5,4,3,2)$, $(8,6,3,2,1)$, $(8,6,4,1,1)$, $(8,6,4,3,1)$, $(8,6,5,2,1)$, $(8,6,5,4,1)$, \\
        & $(8,6,5,4,3)$, $(8,7,3,2,2)$, $(8,7,4,2,1)$, $(8,7,4,3,2)$, $(8,7,5,4,2)$, $(8,7,6,3,2)$, $(8,7,6,5,4)$, \\
        & $(9,4,3,2,1)$, $(9,5,3,2,1)$, $(9,6,4,2,1)$, $(9,6,5,2,1)$, $(9,7,4,3,1)$, $(9,7,5,3,1)$, $(9,8,4,3,2)$, \\
        & $(9,8,6,5,2)$, $(10,4,3,2,1)$, $(10,6,3,2,1)$, $(10,7,4,2,1)$, $(10,7,6,2,1)$, $(10,8,4,3,1)$, \\
        & $(10,8,6,3,1)$, $(10,9,4,3,2)$ \\\bottomrule
    \end{tabular}
    \caption{A list of all balanced vectors $\mathbf{v} \in \Z^p_{+}$ for $p \in \{1,2,3,4,5\}$, up to scaling and re-ordering of entries. There are $1$, $1$, $2$, $8$, and $113$ vectors, respectively, in the rows of this table.}\label{tab:totally_balanced_smallp}
\end{table}

\subsection{Regular vectors}\label{sec:regular_vec}

It seems likely that determining whether or not a given vector is balanced is a computationally difficult question; we are not aware of a significantly better method than just checking the null space of every $(p-1) \times p$ matrix with entries from $\{-1,0,1\}$. However, we will show shortly that the following vectors (whose defining condition is easy to check) are all balanced:

\begin{definition}\label{defn:regular_sequence}
    Suppose $\mathbf{v} = (v_1, v_2, \ldots, v_p) \in \Z^p_{+}$ is non-increasing (i.e., $v_1 \geq v_2 \geq \cdots \geq v_p$). If $v_j \leq \sum_{i=j+1}^p v_i$ for all $j \in \{1,2,\ldots,p-1\}$ and $v_p = 1$, then $\mathbf{v}$ is called \emph{regular}.
\end{definition}

We note that, in addition to the requirement $v_p = 1$ in this definition, the inequality $v_j \leq \sum_{i=j+1}^p v_i$ when $j = p-1$ forces $v_{p-1} = 1$, so all regular vectors end with $(\ldots, 1, 1)$. We also note that the adjective ``regular'' in the past was defined to refer to the sequence $v_1, v_2, \ldots, v_p$ \cite{FO89,oeisA003513}. Here, we instead apply it to the corresponding vector $(v_1, v_2, \ldots, v_p)$. That is, we talk about regular \emph{vectors} instead of regular \emph{sequences}; this is just a matter of convenience, as it is straightforward to convert between the two. Finally, we note that if $(v_1, v_2, \ldots, v_p)$ is a regular vector then $(v_1, v_2, \ldots, v_{p-1})$ is sometimes called a \emph{complete partition} (see \cite{Par98,oeisA126796} and the references therein).

The following proposition shows that regular vectors are balanced:

\begin{proposition}\label{prop:regular_sequences}
    Suppose $\mathbf{v} \in \Z^p_{+}$ is non-increasing and has $v_p = 1$. Then $\mathbf{v}$ is regular if and only if it is balanced and the matrix $A$ from Definition~\ref{defn:totally_balanced} can be chosen to be upper triangular.
\end{proposition}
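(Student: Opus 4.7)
The plan is to prove both directions by constructing (or reading off) an upper triangular $(p-1)\times p$ matrix $A$ whose rows encode equations of the form $v_j = \sum_{i>j} c_{j,i} v_i$ with $c_{j,i}\in\{-1,0,1\}$.

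For the forward direction (regular $\Rightarrow$ balanced with upper triangular $A$), the key intermediate claim is a ``subset-sum completeness'' statement: if $\mathbf{v}$ is regular then for every $j\in\{1,\ldots,p-1\}$, every integer in $\{0,1,\ldots,T_j\}$ (where $T_j := \sum_{i=j+1}^p v_i$) can be written as a sum of a subset of $\{v_{j+1},\ldots,v_p\}$. I would prove this by reverse induction on $j$: the base case $j = p-1$ uses $v_p = 1$, and the inductive step splits subsets of $\{v_{j+1},\ldots,v_p\}$ into those containing $v_{j+1}$ (attaining $\{v_{j+1},\ldots,v_{j+1}+T_{j+1}\}$) and those not (attaining $\{0,\ldots,T_{j+1}\}$ by induction); these two intervals cover $\{0,\ldots,T_j\}$ precisely when $v_{j+1}\le T_{j+1}+1$, which is implied by the regularity inequality $v_{j+1}\le T_{j+1}$. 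Applying this to $v_j$ itself (which satisfies $v_j\le T_j$) yields $S_j\subseteq\{j+1,\ldots,p\}$ with $\sum_{i\in S_j} v_i = v_j$. Define $A$ by setting $a_{j,j}=1$, $a_{j,i}=-1$ for $i\in S_j$, and $a_{j,i}=0$ otherwise; then $A$ is upper triangular with nonzero diagonal, hence has rank $p-1$, and $A\mathbf{v}=\mathbf{0}$, so $\mathrm{null}(A)=\mathrm{span}(\mathbf{v})$.

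For the backward direction, suppose $A \in \mathcal{M}_{p-1,p}$ is upper triangular with entries in $\{-1,0,1\}$ and $\mathrm{null}(A)=\mathrm{span}(\mathbf{v})$. Because $A$ has rank $p-1$ and is upper triangular, each diagonal entry $a_{j,j}$ must be nonzero, hence lies in $\{-1,1\}$. Row $j$ of $A\mathbf{v}=\mathbf{0}$ rearranges to $v_j = \sum_{i>j} c_{j,i} v_i$ with $c_{j,i} := -a_{j,j}a_{j,i} \in\{-1,0,1\}$. Since $\mathbf{v}\in\mathbb{Z}^p_+$, the triangle inequality gives
\[
v_j = \Bigl|\sum_{i>j} c_{j,i} v_i\Bigr| \le \sum_{i>j} |c_{j,i}|\, v_i \le \sum_{i>j} v_i = T_j,
\]
which, together with the hypothesis $v_p=1$, is exactly the definition of regularity.

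I expect the main obstacle to be the subset-sum completeness lemma in the forward direction. It is a standard fact about complete partitions, but needs to be carefully formulated and verified (the off-by-one shift between the regularity condition $v_j\le T_j$ and the complete-partition condition $v_j\le T_{j+1}+1$ is where the hypothesis $v_p=1$ enters). Once that lemma is in hand, the rest is a direct construction for one direction and a one-line triangle-inequality argument for the other.
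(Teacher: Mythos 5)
Your overall route matches the paper's: the same matrix construction $a_{j,j}=1$, $a_{j,i}=-1$ for $i\in S_j$ in one direction, and the same triangle-inequality reading of the rows in the other. One genuine difference is that the paper simply cites the existence of the subsets $S_j$ with $v_j=\sum_{i\in S_j}v_i$ as a standard fact about regular sequences, whereas you prove it from scratch via the reverse induction on $j$. That induction is correct: the two families of subsets (containing or omitting $v_{j+1}$) realize $\{0,\ldots,T_{j+1}\}$ and $\{v_{j+1},\ldots,T_j\}$ respectively, and the regularity inequality $v_{j+1}\le T_{j+1}$ guarantees these intervals overlap. This makes your forward direction self-contained, which is a modest improvement over the paper.

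There is, however, one flawed step in your backward direction. You assert that because $A$ is upper triangular of rank $p-1$, each diagonal entry must be nonzero. That implication is false on its own: the matrix
\[
A=\begin{bmatrix} 0 & 1 & 0\\ 0 & 0 & 1\end{bmatrix}
\]
is upper triangular with rank $p-1=2$, yet both diagonal entries are zero. What rescues the argument is the additional hypothesis that the null vector $\mathbf{v}$ has $v_p=1\neq 0$ (indeed all entries positive): if $a_{j,j}=0$, then rows $j,\ldots,p-1$ are supported entirely on columns $j+1,\ldots,p$, so either these $p-j$ rows are linearly dependent (contradicting $\operatorname{rank}(A)=p-1$) or they force every null vector to vanish in coordinates $j+1,\ldots,p$ (contradicting $v_p=1$). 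The paper makes exactly this point, noting that a zero diagonal entry ``together with the fact that $v_p=1$'' is what yields the contradiction. Once you patch that justification, the rest of your backward direction (rearranging row $j$ to $v_j=\sum_{i>j}c_{j,i}v_i$ and applying the triangle inequality) is fine.
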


\begin{proof}
    For the ``if'' direction, suppose $A \in \mathcal{M}_{p-1,p}$ is upper triangular, all of its entries belong to $\{-1,0,1\}$, and $\mathrm{null}(A) = \mathrm{span}(\mathbf{v})$. Furthermore, suppose without loss of generality that its diagonal entries all equal $1$ (if any equal $-1$ then we can multiply that row by $-1$ without changing the null space, and none can equal $0$ since that (together with the fact that $v_p = 1$) would imply $\mathrm{rank}(A) < p-1$).

    If $\mathbf{a_j}^T$ is the $j$-th row of $A$, then $\mathbf{v} \in \mathrm{null}(A)$ implies $\mathbf{a_j} \cdot \mathbf{v} = 0$, so (since $\mathbf{a_j} \in \{-1,0,1\}^p$) we have $v_j \leq \sum_{i=j+1}^p v_i$ for all $1 \leq j \leq p-1$. In other words, $\mathbf{v}$ is regular.
    
    Conversely, if $\mathbf{v}$ is regular then a standard result concerning regular vectors (see \cite{FO89} or \cite[Lemma~2.1]{FRM90}, for example) says that there exist subsets $S_j \subseteq \{j+1, j+2, \ldots, p\}$ such that $v_j = \sum_{i \in S_j} v_i$ for all $1 \leq j \leq p-1$. We then define $A \in \mathcal{M}_{p-1,p}$ to have its $(j,i)$-entry equal to
    \[
        a_{j,i} = \begin{cases}
            1, & \quad \text{if $i = j$,} \\
            -1, & \quad \text{if $i \in S_j$, and} \\
            0, & \quad \text{otherwise.}
        \end{cases}
    \]
    It is then straightforward to verify that $A$ is upper triangular, $A\mathbf{v} = \mathbf{0}$, and $\mathrm{rank}(A) = p-1$ (since its diagonal entries are all non-zero) so $\mathrm{null}(A) = \mathrm{span}(\mathbf{v})$. This completes the proof.
\end{proof}

For any dimension $p \geq 4$, regular vectors form a strict subset of balanced vectors. Indeed, it is straightforward to show that if a non-increasing vector $\mathbf{v} \in \Z^p_{+}$ is balanced then $v_1 \leq \sum_{i=2}^p v_i$ (just apply the triangle inequality to the first entry of $A\mathbf{v} = \mathbf{0}$, where $A$ is the matrix from Definition~\ref{defn:totally_balanced}). However, the other $p-2$ inequalities $v_j \leq \sum_{i=j+1}^p v_i$ for $j \in \{2,3,\ldots,p-1\}$ that hold for regular vectors need not hold for balanced vectors. For example, when $p = 4$ there are only $6$ regular vectors: $(1,1,1,1)$, $(2,1,1,1)$, $(3,1,1,1)$, $(2,2,1,1)$, $(3,2,1,1)$, and $(4,2,1,1)$. The other two (up to scaling) non-increasing balanced vectors in $\mathbb{Z}_{+}^4$ (i.e., $(3,2,2,1)$ and $(4,3,2,1)$) are not regular. When $p = 5$, only $27$ of the $113$ balanced vectors from Table~\ref{tab:totally_balanced_smallp} are regular.

\section{$\{-1,0,1\}$ and $\{-1,1\}$-diagonalizability of specific graph families}\label{sec:wHdiagGraphs}

In this section, we characterize $\{-1,0,1\}$- and $\{-1,1\}$-diagonalizability of complete graphs, complete multipartite graphs, graph complements, and some graphs that can be constructed by joining together smaller graphs.

The following lemma will be useful to us repeatedly when characterizing $\{-1,1\}$-diagonalizability of graphs.

\begin{lemma}\label{lem:one_neg_circulant}
    There exists an invertible matrix $A \in \mathcal{M}_n$ with the following properties if and only if $n \in \{1\} \cup \{2k : k \in \mathbb{Z}_{+}\}$:
    \begin{itemize}
        \item every entry of $A$ belongs to $\{-1,1\}$,
        
        \item one column of $A$ is $\mathbf{1}$, and

        \item all other columns of $A$ are orthogonal to $\mathbf{1}$.
    \end{itemize}
\end{lemma}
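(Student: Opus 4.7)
The plan is to handle the two directions separately, with a parity argument for ``only if'' and an explicit construction for ``if''.

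For the ``only if'' direction, suppose such an $A$ exists and $n \geq 2$. Any column of $A$ other than $\mathbf{1}$ is a $\{-1,1\}$-valued vector orthogonal to $\mathbf{1}$, so its entries must sum to $0$. But a sum of $n$ values each equal to $\pm 1$ has the same parity as $n$, so such a column can exist only when $n$ is even. This forces $n \in \{1\} \cup \{2k : k \in \mathbb{Z}_+\}$, where the $n=1$ case corresponds to the trivial choice $A = [1]$ (the condition on ``other columns'' is vacuous).

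For the ``if'' direction, the $n=1$ case is the trivial matrix just described, so assume $n = 2k \geq 2$. Let $S$ denote the set of $\{-1,1\}$-vectors in $\R^n$ with exactly $k$ entries equal to $+1$ and $k$ entries equal to $-1$; these are precisely the $\{-1,1\}$-vectors orthogonal to $\mathbf{1}$. The key claim is that $S$ spans the $(n-1)$-dimensional hyperplane $\mathbf{1}^{\perp}$. To see this, fix indices $i \neq j$; because $n \geq 2$, one can choose some $\mathbf{u} \in S$ with $u_i = 1$ and $u_j = -1$, and then the vector $\mathbf{u}'$ obtained by swapping those two entries still lies in $S$, with $\mathbf{u} - \mathbf{u}' = 2(\mathbf{e}_i - \mathbf{e}_j)$. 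Since the vectors $\mathbf{e}_i - \mathbf{e}_j$ span $\mathbf{1}^{\perp}$, so does $S$. Hence one can extract $n-1$ linearly independent vectors $\mathbf{v}_1, \ldots, \mathbf{v}_{n-1}$ from $S$, and the matrix $A = [\,\mathbf{1} \mid \mathbf{v}_1 \mid \cdots \mid \mathbf{v}_{n-1}\,]$ has entries in $\{-1,1\}$, has first column $\mathbf{1}$, and has all other columns in $\mathbf{1}^{\perp}$. It is invertible because $\mathbf{1} \notin \mathbf{1}^{\perp}$, so together with a basis of $\mathbf{1}^{\perp}$ it spans $\R^n$.

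Neither direction involves a serious obstacle: the ``only if'' is pure parity, and the ``if'' reduces to showing that the coordinate differences $\mathbf{e}_i - \mathbf{e}_j$ arise as differences of elements of $S$. The only small care point is verifying the construction at the boundary $n = 2$, where $S = \{(1,-1),(-1,1)\}$ is very small but still furnishes a one-dimensional basis of $\mathbf{1}^{\perp}$, so the argument goes through uniformly for all even $n \geq 2$.
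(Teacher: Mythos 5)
Your proof is correct, but the ``if'' direction takes a genuinely different route from the paper's. The ``only if'' direction is the same parity argument in both cases. For the ``if'' direction, the paper splits into cases: $n \in \{2,4\}$ is handled by Hadamard matrices, and for even $n \geq 6$ it builds an explicit circulant matrix with top row $(-1,1,1,-1,\ldots,1,-1)$, invoking a result relating the rank of a circulant matrix to the roots of unity at which its symbol polynomial vanishes to show this circulant has rank $n-1$, then swaps one column for $\mathbf{1}$. You instead prove that the set $S$ of balanced $\{-1,1\}$-vectors spans $\mathbf{1}^{\perp}$ by realizing each $2(\mathbf{e}_i - \mathbf{e}_j)$ as a difference of two elements of $S$, and then extract a basis. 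Your argument is more elementary (no circulant/roots-of-unity machinery, no case split, and it works uniformly for all even $n \geq 2$), while the paper's approach has the advantage of producing a fully explicit matrix rather than asserting the existence of a linearly independent subset of $S$. The one detail worth making sure you have verified---and you did---is that for every pair $i \neq j$ one can actually find $\mathbf{u} \in S$ with $u_i = 1$ and $u_j = -1$, which requires only $k \geq 1$, i.e.\ $n \geq 2$.
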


\begin{proof}
    The ``only if'' direction follows from the fact that if $n > 1$ is odd then there does not exist a vector (i.e., a column of $A$) in $\{-1,1\}^n$ that is orthogonal to $\mathbf{1}$, since the sum of an odd number members of $\{-1,1\}$ cannot equal $0$.
    
    For the ``if'' direction, the $n = 1$ case is clear and the $n \in \{2,4\}$ cases are handled by Hadamard matrices, so suppose $n \geq 6$ is even. Consider the vector $\mathbf{v}=(1,-1,1,-1,\ldots,1,-1) \in \R^n$. It is clear that the circulant matrix with top row equal to $\mathbf{v}$ has rank $1$ (every row in this matrix is the negative of the row above it), so \cite[Proposition 1.1]{ingleton} tells us that the polynomial
    \[
        f(x) := 1 - x + x^2 - x^3 + \cdots + x^{n-2} - x^{n-1}
    \]
    has $n-1$ of the $n$ different $n$-th roots of unity as roots. It is straightforward to check that $-1$ is not a root of $f$, so we conclude that all $n$-th roots of unity except for $-1$ are roots of $f$.

    Now consider the following polynomial, which is obtained from $f$ simply by swapping the signs of its first two coefficients (but leaving the rest unchanged):
    \[
        g(x) := (-1 + x) + (x^2 - x^3 + \cdots + x^{n-2} - x^{n-1}).
    \]
    Since $n \geq 6$ it is straightforward to see that $-1$ is not a root of $g$. Furthermore, since $(f-g)(x) = 2-2x$, has $1$ as its only root, we conclude that $1$ is the only root of unity that is a root of $g$. It follows from \cite[Proposition 1.1]{ingleton} that the circulant matrix $C$ with top row equal to $\mathbf{w}=(-1,1, 1, -1, \ldots, 1,-1)\in \R^n$ (obtained from $\mathbf{v}$ by swapping the signs of its first two entries) has rank $n-1$. We can thus remove one column from $C$ without changing its rank, and then append $\mathbf{1}$ as a new column in its place. Since $\mathbf{1}$ is orthogonal to all of the other columns, the resulting matrix $A$ will have full rank and all other properties described in the statement of the lemma.
\end{proof}

Our usage of the above lemma will be to demonstrate the existence or non-existence of a $\{-1,1\}$ matrix diagonalizing certain Laplacians. Since every Laplacian is symmetric with $\mathbf{1}$ as an eigenvector (corresponding to the eigenvalue $0$), we can always (for every graph Laplacian and every $n$) find a matrix $A$ diagonalizing a graph Laplacian that satisfies the second and third conditions of Lemma~\ref{lem:one_neg_circulant}; it is just the first condition that is trickier.

\begin{corollary}\label{cor:is_one_neg}
    Suppose $n \in \mathbb{Z}_{+}$. The following are equivalent:
    \begin{itemize}
        \item[a)] $n \in \{1\} \cup \{2k : k \in \mathbb{Z}_{+}\}$;

        \item[b)] there is an $n$-vertex connected $\{-1,1\}$-diagonalizable graph; and

        \item[c)] the complete graph $K_n$ is $\{-1,1\}$-diagonalizable.
    \end{itemize}
\end{corollary}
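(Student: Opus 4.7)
The plan is to prove the cyclic chain of implications (a) $\Rightarrow$ (c) $\Rightarrow$ (b) $\Rightarrow$ (a). The middle implication is immediate since $K_n$ is connected, so the real content lies in (a) $\Rightarrow$ (c) and (b) $\Rightarrow$ (a). Both will reduce to Lemma~\ref{lem:one_neg_circulant} by exploiting the fact that, for a connected graph, the kernel of the Laplacian is exactly $\mathrm{span}(\mathbf{1})$.

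For (a) $\Rightarrow$ (c): The Laplacian of $K_n$ is $L = nI - J$, which has eigenvalue $0$ with eigenvector $\mathbf{1}$ and eigenvalue $n$ with eigenspace $\mathbf{1}^\perp$. In particular, \emph{any} vector orthogonal to $\mathbf{1}$ is automatically a $\lambda = n$ eigenvector, so I do not need to worry about orthogonality among these eigenvectors---only linear independence. Lemma~\ref{lem:one_neg_circulant} provides exactly such an invertible $\{-1,1\}$-valued matrix $A$ whenever $n = 1$ or $n$ is even, with $\mathbf{1}$ as a column and all other columns orthogonal to $\mathbf{1}$. This $A$ diagonalizes $L$.

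For (b) $\Rightarrow$ (a): Suppose $G$ is a connected $n$-vertex graph whose Laplacian $L$ is diagonalized by some $P \in \mathcal{M}_n$ with entries in $\{-1,1\}$. Connectedness forces the eigenspace for $0$ to be $\mathrm{span}(\mathbf{1})$, so some column of $P$ must be a scalar multiple of $\mathbf{1}$; as its entries are in $\{-1,1\}$, this column is $\pm\mathbf{1}$, and negating it if needed I may assume one column is $\mathbf{1}$. Because $L$ is symmetric, the remaining columns of $P$, which span the direct sum of the other eigenspaces, lie in $\mathbf{1}^\perp$. Thus $P$ satisfies all three bullet points of Lemma~\ref{lem:one_neg_circulant}, which forces $n \in \{1\} \cup \{2k : k \in \mathbb{Z}_+\}$.

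The only subtlety is ensuring that the basis of eigenvectors other than $\mathbf{1}$ can in fact be chosen orthogonal to $\mathbf{1}$---but this is automatic for symmetric matrices, since eigenspaces corresponding to distinct eigenvalues are mutually orthogonal. Once this observation is in place, both nontrivial implications become direct applications of Lemma~\ref{lem:one_neg_circulant}, with no parity or combinatorial computation needed beyond what is already packaged in that lemma.
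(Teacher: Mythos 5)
Your proposal is correct and follows essentially the same route as the paper: both directions of substance, (a)\,$\Rightarrow$\,(c) and (b)\,$\Rightarrow$\,(a), are reduced to Lemma~\ref{lem:one_neg_circulant} via the observation that connectedness forces the $0$-eigenspace to be $\mathrm{span}(\mathbf{1})$ and symmetry forces the remaining eigenvectors into $\mathbf{1}^\perp$. Your explicit normalization of the $\pm\mathbf{1}$ column is a minor point the paper leaves implicit, but the argument is the same.
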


\begin{proof}
    The fact that (c) implies (b) is trivial.
    
    To see that (b) implies (a), recall that in a connected graph $G$ all eigenvectors of the Laplacian $L$ except for the scalar multiples of $\mathbf{1}$ (i.e., all eigenvectors corresponding to an eigenvalue other than $0$) are orthogonal to $\mathbf{1}$. If $L$ is diagonalized by a matrix with entries from $\{-1,1\}$, the columns of that matrix are eigenvectors of $L$ and is thus a matrix with all of the properties described by Lemma~\ref{lem:one_neg_circulant}. By that lemma, $n \in \{1\} \cup \{2k : k \in \mathbb{Z}_{+}\}$.
    
    Finally, to see that (a) implies (c), let $A \in \mathcal{M}_n$ be a matrix with all of the properties described by Lemma~\ref{lem:one_neg_circulant}. The Laplacian matrix $L$ of $K_n$ is $L = nI - J$, which has eigenspaces $\mathrm{span}(\mathbf{1})$ (corresponding to eigenvalue $0$) and its orthogonal complement (corresponding to eigenvalue $n$). It follows that $A$ (whose columns all equal $\mathbf{1}$ or are orthogonal to $\mathbf{1}$) diagonalizes $L$, so $K_n$ is $\{-1,1\}$-diagonalizable. 
\end{proof}

\subsection{The complete graph}

The following theorem summarizes what is known about the $\{-1,0,1\}$-diagonalizability and $\{-1,1\}$-diagonalizability and bandwidths of complete graphs.

\begin{theorem}\label{thm:complete_is_pm1}
    Let $n \geq 1$ be an integer and let $K_n$ be the complete graph on $n$ vertices.
    \begin{itemize}
        \item[a)] $K_n$ is $\{-1,0,1\}$-diagonalizable, with $\{-1,0,1\}$-bandwidth at most $2$.
        
        \item[b)] $K_n$ is $\{-1,1\}$-diagonalizable if and only if $n$ is even. Furthermore,

        \begin{itemize}
            \item[i)] if $n \equiv 2 \pmod{4}$ then $K_n$ has $\{-1,1\}$-bandwidth $n-1$; and

            \item[ii)] if there exists a Hadamard matrix of order $n$ (and thus $n \in \{1,2\} \cup \{4k : k \in \mathbb{Z}_{+}\}$) then $K_n$ has $\{-1,1\}$-bandwidth $1$.
        \end{itemize}
    \end{itemize}
\end{theorem}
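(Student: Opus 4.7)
The plan is to treat the three items separately, leveraging Corollary~\ref{cor:is_one_neg} and Lemma~\ref{lem:one_neg_circulant}.

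For part (a), the Laplacian is $L=nI-J$, which has only two eigenspaces: $\mathrm{span}(\mathbf{1})$ (eigenvalue $0$) and $\mathbf{1}^\perp$ (eigenvalue $n$). I would exhibit the diagonalizing matrix $P$ whose columns are $\mathbf{1},\, \mathbf{e_1}-\mathbf{e_2},\, \mathbf{e_2}-\mathbf{e_3},\, \ldots,\, \mathbf{e_{n-1}}-\mathbf{e_n}$. These are $\{-1,0,1\}$-vectors forming a basis of $\R^n$, with $\mathbf{1}$ in the $0$-eigenspace and the remaining $n-1$ columns spanning $\mathbf{1}^\perp$. A routine calculation shows $P^TP$ has $(1,1)$-entry equal to $n$, zeros elsewhere in the first row and column, and a tridiagonal $(n-1)\times(n-1)$ block with $2$ on the diagonal and $-1$ on the off-diagonals; hence $P^TP$ is tridiagonal, giving $\{-1,0,1\}$-bandwidth at most $2$.

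For the opening ``iff'' of part (b), the equivalence ``$K_n$ is $\{-1,1\}$-diagonalizable iff $n$ is even'' is an immediate specialization of Corollary~\ref{cor:is_one_neg}. For case (ii), given a Hadamard matrix $H$ of order $n$, one may negate columns so that $\mathbf{1}$ appears as a column; Hadamard orthogonality then places the other columns in $\mathbf{1}^\perp$, so $H$ diagonalizes $L$, and $H^TH=nI$ is diagonal, giving bandwidth $1$.

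The real work is case (i). The crux is a short parity observation: if $\mathbf{u},\mathbf{v}\in\{-1,1\}^n$ both lie in $\mathbf{1}^\perp$, then letting $a$ denote the number of coordinates where both equal $+1$ gives $\mathbf{u}\cdot\mathbf{v}=4a-n$; in particular, when $n\equiv 2\pmod 4$, no two such vectors are mutually orthogonal. For the lower bound, in any $\{-1,1\}$-matrix $P$ diagonalizing $L$, invertibility forces exactly one column to equal $\pm\mathbf{1}$ at some position $p$, while the remaining $n-1$ columns lie in $\mathbf{1}^\perp$; these occupy the positions $\{1,\ldots,n\}\setminus\{p\}$, among which the largest pairwise gap is at least $n-2$ (the minimum, achieved only by $p\in\{1,n\}$). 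By the parity fact, those two extreme columns have non-zero dot product, so $P^TP$ has a non-zero entry at distance $n-2$, forcing the bandwidth to be at least $n-1$. For the matching upper bound, place $\mathbf{1}$ as the first column and fill the remaining $n-1$ columns with any $\{-1,1\}$-basis of $\mathbf{1}^\perp$, whose existence is guaranteed by Lemma~\ref{lem:one_neg_circulant}; then the first row and column of $P^TP$ vanish off the diagonal and no other entry sits at distance $n-1$, yielding bandwidth at most $n-1$. The main obstacle is the parity lemma coupled with pinning down the optimal placement of $\pm\mathbf{1}$ — both elementary, but they must be stated carefully to rule out cleverer column arrangements.
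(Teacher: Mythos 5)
Your proposal is correct and, for the one part the paper actually proves in detail --- the lower bound in (b)(i) --- it uses exactly the same parity argument ($\mathbf{u}\cdot\mathbf{v}=4a-n\neq 0$ when $n\equiv 2\pmod 4$) as the paper, with a slightly more careful discussion of where the $\pm\mathbf{1}$ column can sit. For parts (a) and (b)(ii) you simply unfold into explicit constructions (the path-difference basis $\mathbf{e_i}-\mathbf{e_{i+1}}$, and column-negating a Hadamard matrix) what the paper handles by citing \cite[Lemma~1.5]{adm2021weakly} and \cite[Proposition~2.3]{Breen22}, so the content is the same.
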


\begin{proof}
    Part~(a) of the theorem is equivalent to the statement that $K_n$ is weakly Hadamard diagonalizable, which was proved in \cite[Lemma~1.5]{adm2021weakly}. The non-bandwidth portion of part~(b) was proved already in Corollary~\ref{cor:is_one_neg}. The bandwidth statement (b)(ii) is equivalent to the statement that if there exists a Hadamard matrix of order $n$ then $K_n$ is Hadamard-diagonalizable, which was proved in \cite[Proposition~2.3]{Breen22}. Thus all that remains is to prove the bandwidth claim (b)(i).
    
    To verify (b)(i), suppose that $n \equiv 2 \pmod{4}$. Since the Laplacian $L$ of $K_n$ has eigenspaces of dimension $1$ and $n-1$ (corresponding to eigenvalues $0$ and $n$, respectively), it is immediate that its $\{-1,1\}$-bandwidth is no larger than $n-1$: if $P$ diagonalizes $L$ then $P^TP$ will be block diagonal with blocks of size $1 \times 1$ and $(n-1) \times (n-1)$. To see that the $\{-1,1\}$-bandwidth is not smaller than $n-1$, it suffices to show that none of its $\{-1,1\}$-eigenvectors corresponding to eigenvalue $n$ are orthogonal to each other (so all entries in the $(n-1) \times (n-1)$ diagonal block of $P^TP$ are non-zero). To this end, simply notice that if $\mathbf{v}$ and $\mathbf{w}$ are $\{-1,1\}$-eigenvectors of $L$ corresponding to eigenvalue $n$ then they are both orthogonal to $\mathbf{1}$ and thus have $n/2$ entries equal to each of $-1$ and $1$. If $k$ denotes the number of subscripts $j$ for which $v_j = w_j = 1$ then we see that
    \[
        \mathbf{v} \cdot \mathbf{w} = 2k - 2((n/2) - k) = 4k - n \neq 0
    \]
    (since $n \equiv 2 \pmod{4}$), so $\mathbf{v}$ and $\mathbf{w}$ are not orthogonal.
\end{proof}

The above theorem leaves open some questions about $\{-1,1\}$-bandwidth and $\{-1,0,1\}$-bandwidth of the complete graph in certain cases. In particular, when are the $\{-1,1\}$- and $\{-1,0,1\}$-bandwidths of $K_n$ equal to $1$ and when are they equal to $2$? We now show that pinning down these remaining bandwidth questions is likely very difficult, as they are equivalent to known and presumably difficult combinatorial questions:

\begin{theorem}\label{thm:hadamard_equiv}
    Suppose $n \in \mathbb{Z}_{+}$. The following are equivalent:
    \begin{itemize}
        \item[a)] there exists an $n \times n$ Hadamard matrix;

        \item[b)] there is an $n$-vertex connected graph with $\{-1,1\}$-bandwidth $1$; and
        
        \item[c)] the complete graph $K_n$ has $\{-1,1\}$-bandwidth $1$.
    \end{itemize}
\end{theorem}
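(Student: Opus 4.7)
The plan is to prove the cycle of implications (a)$\Rightarrow$(c)$\Rightarrow$(b)$\Rightarrow$(a). The only nontrivial step is (b)$\Rightarrow$(a), and even that reduces to unpacking what ``bandwidth $1$'' means for a $\{-1,1\}$-matrix.

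For (a)$\Rightarrow$(c), I would just invoke Theorem~\ref{thm:complete_is_pm1}(b)(ii), which already asserts that whenever an $n\times n$ Hadamard matrix exists, $K_n$ has $\{-1,1\}$-bandwidth $1$. The implication (c)$\Rightarrow$(b) is immediate since $K_n$ is connected for every $n\geq 1$.

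For (b)$\Rightarrow$(a), I would take a connected graph $G$ on $n$ vertices together with an invertible matrix $P$, all of whose entries lie in $\{-1,1\}$, such that $P^{-1}LP$ is diagonal and $P^TP$ has bandwidth $1$. Since bandwidth $1$ literally means all off-diagonal entries of $P^TP$ vanish, $P^TP$ is diagonal. On the other hand each column of $P$ has entries $\pm 1$, so each diagonal entry of $P^TP$ equals $n$. Thus $P^TP = nI$, which is exactly the Hadamard condition: $P$ is an $n\times n$ Hadamard matrix. (Notice that the diagonalization of $L$ is not actually needed for this implication — only the bandwidth-$1$ requirement on $P^TP$ is used — but it is what forces the existence of the matrix $P$ in the first place.)

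There is no real obstacle here; the theorem is essentially a bookkeeping corollary that makes precise the intuition ``$\{-1,1\}$-bandwidth $1$ = Hadamard diagonalizability'' and explains why the remaining question about when $K_n$ has $\{-1,1\}$-bandwidth $1$ is exactly the Hadamard conjecture. I would close by remarking on this connection.
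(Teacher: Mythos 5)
Your proposal is correct and follows essentially the same route as the paper: (c)$\Rightarrow$(b) is immediate, (b)$\Rightarrow$(a) unpacks ``bandwidth $1$'' to conclude $P^TP = nI$ so that $P$ is Hadamard, and (a)$\Rightarrow$(c) is delegated to the earlier result that any Hadamard matrix diagonalizes the Laplacian of $K_n$ (Theorem~\ref{thm:complete_is_pm1}(b)(ii), which the paper cites via \cite[Proposition~2.3]{Breen22}). Your extra observation that the diagonalization of $L$ is not needed in (b)$\Rightarrow$(a) is accurate but does not change the argument.
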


\begin{proof}
    The fact that (c) implies (b) is trivial.

    To see that (b) implies (a), suppose $G$ is an $n$-vertex connected graph with $\{-1,1\}$-bandwidth $1$, $L$ is its Laplacian matrix, and $P$ is an invertible matrix whose entries belong to $\{-1,1\}$ for which $P^{-1}LP$ is diagonal and $P^TP$ is diagonal (the existence of such a $P$ follows from the fact that $G$ has $\{-1,1\}$-bandwidth $1$). This implies exactly that $P$ is a Hadamard matrix.

    Finally, to see that (a) implies (c), recall from \cite[Proposition~2.3]{Breen22} that every Hadamard matrix diagonalizes the Laplacian of $K_n$, so it has $\{-1,1\}$-bandwidth $1$ whenever a Hadamard matrix of order $n$ exists.
\end{proof}

A well-known and long-standing (and thus likely very difficult) conjecture aims to establish when the equivalent conditions of Theorem~\ref{thm:hadamard_equiv} hold:\footnote{The ``only if'' direction of this conjecture is known to hold; the conjecture really concerns the ``if'' direction.}

\begin{conjecture}[Hadamard conjecture]\label{conj:hadamard}
    The (equivalent) statements of Theorem~\ref{thm:hadamard_equiv} hold if and only if $n \in \{1,2\} \cup \{4k : k \in \mathbb{Z}_{+}\}$.
\end{conjecture}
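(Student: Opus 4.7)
This is the classical Hadamard conjecture, one of the most famous open problems in combinatorial design theory, and the paper's own footnote flags it as such. By Theorem~\ref{thm:hadamard_equiv}, the statement is equivalent to the claim that a Hadamard matrix of order $n$ exists if and only if $n \in \{1,2\} \cup \{4k : k \in \mathbb{Z}_{+}\}$. The plan naturally splits into the necessary direction (classical, fully provable) and the sufficient direction (the Hadamard conjecture proper, open); the hard part is entirely the latter, and realistically the best one can do is prove necessity and acknowledge that sufficiency is beyond current techniques.

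For the necessary direction, I would proceed as follows. The cases $n \in \{1,2\}$ are immediate, so assume $H \in \mathcal{M}_n$ is Hadamard with $n \geq 3$. Since negating any row or column of a Hadamard matrix leaves it Hadamard, first normalize $H$ so that its first row equals $\mathbf{1}^T$. Every other row is then a $\{-1,1\}$-vector orthogonal to $\mathbf{1}$, and so contains exactly $n/2$ entries equal to $+1$; in particular $n$ is even. To upgrade this to $4 \mid n$, further permute columns so that the second row reads $(1,\ldots,1,-1,\ldots,-1)$ with $n/2$ entries of each sign. Write the third row as a concatenation of two length-$n/2$ blocks and let $a$ and $b$ denote the number of $+1$ entries in the first and second block respectively. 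Orthogonality of the third row with the first row gives $2(a+b) - n = 0$, i.e.\ $a + b = n/2$, while orthogonality with the second row gives $2(a - b) = 0$, i.e.\ $a = b$. Hence $a = b = n/4$, forcing $4 \mid n$.

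For the sufficient direction, the goal would be to produce, for every $k \in \mathbb{Z}_{+}$, an explicit Hadamard matrix of order $4k$. The standard toolbox includes Sylvester's doubling construction $H \mapsto \left[\begin{smallmatrix} H & H \\ H & -H \end{smallmatrix}\right]$, which yields all orders that are powers of $2$; Paley's construction of order $q+1$ from a prime power $q \equiv 3 \pmod 4$; Paley's Type II construction of order $2(q+1)$ for $q \equiv 1 \pmod 4$; Williamson-type constructions from four pairwise commuting $\pm 1$ circulants whose squares sum to a scalar multiple of the identity; and numerous ad hoc and computer-assisted constructions for small orders. No order divisible by $4$ is known to fail, but no general argument produces a Hadamard matrix of every order $4k$. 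This step is the main obstacle, cannot be completed with any presently known technique, and is precisely the reason the statement is labelled a conjecture rather than a theorem.
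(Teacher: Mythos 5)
This statement is an open conjecture, and the paper offers no proof of it---only a footnote observing that the ``only if'' direction is known while the ``if'' direction is the open Hadamard conjecture proper; your proposal takes exactly this position, and your normalization-plus-row-counting argument for the ``only if'' direction (forcing $4 \mid n$ when $n \geq 3$ via orthogonality of the third row with the first two) is the standard correct proof of the part that is actually known. Nothing more can be asked of a proof attempt here, so this matches the paper's stance.
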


The exact value of the $\{-1,0,1\}$-bandwidth of $K_n$ is equivalent to the existence of a Hadamard-like matrix that is allowed to have some entries equal to $0$:

\begin{theorem}\label{thm:hadamard_equiv_with_zero}
    Suppose $n \in \mathbb{Z}_{+}$. The following are equivalent:
    \begin{itemize}
        \item[a)] there exists an $n \times n$ matrix with entries from $\{-1,0,1\}$, with mutually orthogonal columns and one column equal to $\mathbf{1}$;

        \item[b)] there is an $n$-vertex connected graph with $\{-1,0,1\}$-bandwidth $1$; and
        
        \item[c)] the complete graph $K_n$ has $\{-1,0,1\}$-bandwidth $1$.
    \end{itemize}
\end{theorem}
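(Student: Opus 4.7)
The plan is to follow the exact template of Theorem~\ref{thm:hadamard_equiv}, exploiting the two reductions that make the $K_n$ case tractable: the Laplacian of $K_n$ has only the two eigenspaces $\mathrm{span}(\mathbf{1})$ and its orthogonal complement, and in any connected graph the 0-eigenspace of the Laplacian is exactly $\mathrm{span}(\mathbf{1})$. I interpret statement~(a) as demanding an \emph{invertible} such matrix, which is the implicit hypothesis throughout the paper, and which will in any case follow automatically from the other properties.

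The implication (c)$\Rightarrow$(b) is immediate because $K_n$ is connected. For (b)$\Rightarrow$(a), I would argue as follows. Let $G$ be an $n$-vertex connected graph with $\{-1,0,1\}$-bandwidth $1$, $L$ its Laplacian, and $P$ an invertible matrix with entries in $\{-1,0,1\}$ such that $P^{-1}LP$ is diagonal and $P^TP$ has bandwidth $1$, i.e., is diagonal. The diagonality of $P^TP$ says precisely that the columns of $P$ are mutually orthogonal. Because $G$ is connected, the 0-eigenspace of $L$ equals $\mathrm{span}(\mathbf{1})$, so one column of $P$ (which must be an eigenvector for $L$) is a scalar multiple of $\mathbf{1}$; being a nonzero vector in $\{-1,0,1\}^n$, it is $\pm \mathbf{1}$, and after negating that column if necessary (which preserves all properties) we obtain a matrix of the form required in~(a).

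For (a)$\Rightarrow$(c), let $A$ be a matrix as in~(a). Its columns are mutually orthogonal and nonzero (the prescribed column $\mathbf{1}$ is nonzero, and any zero column would both fail to be orthogonal to $\mathbf{1}$ in a meaningful sense and destroy invertibility), hence linearly independent, so $A$ is invertible. The Laplacian of $K_n$ is $L = nI - J$, whose only eigenvalues are $0$ (with eigenspace $\mathrm{span}(\mathbf{1})$) and $n$ (with eigenspace $\mathrm{span}(\mathbf{1})^{\perp}$). The distinguished column $\mathbf{1}$ of $A$ is a 0-eigenvector, and each of the remaining columns is orthogonal to $\mathbf{1}$ and thus lies in the $n$-eigenspace. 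Consequently every column of $A$ is an eigenvector of $L$, so $A^{-1}LA$ is diagonal; moreover $A^TA$ is diagonal by mutual orthogonality. Hence $K_n$ has $\{-1,0,1\}$-bandwidth at most $1$, and bandwidth cannot be less than $1$, so it equals $1$.

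There is essentially no hard step here, the only mild subtlety is the invertibility/nonzero-column point in~(a); I would handle it by noting that a diagonalizing matrix must be invertible, and that mutually orthogonal nonzero vectors are automatically linearly independent, so no extra hypothesis is needed beyond what~(a) already supplies.
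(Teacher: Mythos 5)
Your proof is correct and follows essentially the same route as the paper's: (c)$\Rightarrow$(b) is trivial, (b)$\Rightarrow$(a) extracts the diagonalizing matrix $P$ with $P^TP$ diagonal, and (a)$\Rightarrow$(c) uses the fact that the eigenspaces of the Laplacian of $K_n$ are exactly $\mathrm{span}(\mathbf{1})$ and its orthogonal complement. You are in fact slightly more careful than the paper on the points it leaves implicit, namely that connectedness forces the $0$-eigenvector column to be $\pm\mathbf{1}$ (so it can be normalized to $\mathbf{1}$) and that the matrix in (a) must be read as invertible.
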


\begin{proof}
    The fact that (c) implies (b) is trivial.

    To see that (b) implies (a), suppose $G$ is an $n$-vertex connected graph with $\{-1,0,1\}$-bandwidth $1$, $L$ is its Laplacian matrix, and $P$ is an invertible matrix whose entries belong to $\{-1,0,1\}$ for which $P^{-1}LP$ is diagonal and $P^TP$ is diagonal (the existence of such a $P$ follows from the fact that $G$ has $\{-1,0,1\}$-bandwidth $1$). This implies exactly that $P$ has the properties described by part~(a) of the theorem.

    Finally, to see that (a) implies (c), let $P$ be a matrix with the properties described by part~(a) of the theorem and let $L$ be the Laplacian matrix of $K_n$. Since the eigenspaces of $L$ are $\mathrm{span}(\mathbf{1})$ and its orthogonal complement, $P$ diagonalizes $L$: $P^{-1}LP$ is diagonal. Furthermore, mutual orthogonality of the columns of $P$ tells us that $P^TP$ is diagonal, so $L$ has $\{-1,0,1\}$-bandwidth $1$.
\end{proof}

Given a positive integer $n$, determining whether or not there exists an $n \times n$ matrix with the properties described by part~(a) of Theorem~\ref{thm:hadamard_equiv_with_zero} seems to be quite difficult, so we state it as another conjecture:\footnote{Both the ``if'' and ``only if'' directions of this conjecture are unsolved.}

\begin{conjecture}\label{conj:hadamard_zero}
    The (equivalent) statements of Theorem~\ref{thm:hadamard_equiv_with_zero} hold if and only if $n \in \{1,2\} \cup \{4k : k \in \mathbb{Z}_{+}\}$.
\end{conjecture}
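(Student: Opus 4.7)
The conjecture naturally splits into two directions, and the proof plan treats them separately.

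For the \emph{``if''} direction, suppose $n \in \{1,2\} \cup \{4k : k \in \mathbb{Z}_{+}\}$. The cases $n = 1, 2$ are immediate (take $(1)$ or $\begin{psmallmatrix} 1 & 1 \\ 1 & -1\end{psmallmatrix}$). For $n = 4k$, the Hadamard conjecture (Conjecture~\ref{conj:hadamard}) posits the existence of a Hadamard matrix $H$ of order $n$; sign-changing appropriate rows of $H$ (an operation that preserves the Hadamard property) produces a matrix with $\mathbf{1}$ as one of its columns, and this matrix satisfies all the conditions of Theorem~\ref{thm:hadamard_equiv_with_zero}(a). Thus this direction reduces entirely to the Hadamard conjecture itself.

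For the \emph{``only if''} direction, the approach is to suppose $P$ is an $n \times n$ invertible $\{-1,0,1\}$-matrix with mutually orthogonal columns and first column $\mathbf{1}$, and derive a contradiction when $n$ is odd with $n \geq 3$ or $n \equiv 2 \pmod 4$ with $n \geq 6$. The central tool is the diagonal form of $P^T P$: writing $n_j$ for the number of nonzero entries in the $j$-th column, we have $P^TP = \op{diag}(n, n_2, \ldots, n_n)$. Since every non-first column lies in $\mathbf{1}^{\perp}$, it has equal numbers of $+1$ and $-1$ entries, forcing each $n_j$ ($j\geq 2$) to be even. Taking determinants then yields the key identity
\[
    \det(P)^2 = n \, n_2 \, n_3 \cdots n_n,
\]
whose right-hand side must be a perfect square.

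This identity resolves several subcases. When $n$ is an odd prime, each $n_j \in \{2,4,\ldots,n-1\}$ is not divisible by $n$, so the right-hand side contains $n$ to an odd power, a contradiction; this rules out all odd primes. For $n \equiv 2 \pmod 4$, the identity can be combined with the observation (already used in the proof of Theorem~\ref{thm:complete_is_pm1}(b)(i)) that two $\{-1,1\}$-valued vectors in $\mathbf{1}^{\perp}$ cannot be orthogonal when $n \equiv 2 \pmod 4$: this forces at most one column of $P$ to have full support, and drives a finite case analysis on the multiset $\{n_2, \ldots, n_n\}$. I would handle small cases such as $n = 6$ in this way (where one can further eliminate candidate support patterns by noting that two $\{-1,0,1\}$-vectors of support size $2$ are orthogonal only if their supports are disjoint).

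The main obstacle is the composite odd $n$ case, beginning with $n = 9$, where neither $p$-adic valuations of $\det(P)^2$ nor elementary support-pattern arguments suffice: for $n = 9$, an even number of columns with $n_j = 6$ makes the right-hand side a perfect square without any structural obstruction, and the full-support parity trick is unavailable. Overcoming this would likely require number-theoretic tools in the spirit of the Bruck--Ryser--Chowla theorem applied to the quadratic form encoded by $P^TP$, or deeper structural results on $\{-1,0,1\}$-orthogonal designs. Accordingly, the conjecture remains genuinely open in both directions, and a complete proof would constitute a substantial new result.
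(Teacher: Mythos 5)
You have correctly identified that this statement is a conjecture rather than a theorem: the paper offers no proof of it, and its footnote explicitly records that both the ``if'' and ``only if'' directions are unsolved. There is therefore no proof in the paper to compare your attempt against, and your overall conclusion---that a complete argument would constitute a substantial new result---matches the paper's own position. Your partial observations are sound as far as they go. The reduction of the ``if'' direction for $n = 4k$ to the Hadamard conjecture via row-negation of a Hadamard matrix is the standard normalization and is correct, though it only trades one open problem for another. For the ``only if'' direction, the identity $\det(P)^2 = n\, n_2 \cdots n_n$ with each $n_j$ even (for $j \ge 2$) is valid, and your valuation argument does cleanly rule out odd primes $n$: since $n_j$ is even and at most $n$, we have $n \nmid n_j$, so the right-hand side has $n$-adic valuation exactly $1$ and cannot be a perfect square.

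Two points of comparison with what the paper reports as known. First, the paper cites a MathOverflow collaboration (Alekseyev, Bogdanov, Tikhomirov) that has settled the conjecture for all $n \le 20$, for $n = p^k$ with $p$ an odd prime, and for $n = 2p^k$. In particular the case $n = 9$, which you single out as the first obstruction to your method, is in fact resolved (by arguments going beyond the determinant identity), as is every $n \equiv 2 \pmod{4}$ of the form $2p^k$; the genuinely open cases begin with $n$ (or $n/2$) odd with at least two distinct prime factors. Second, your suggestion that Bruck--Ryser--Chowla-type quadratic-form techniques applied to $P^TP$ are the natural next tool is reasonable and consistent with how those special cases were handled. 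A small imprecision: in the $n \equiv 2 \pmod 4$ discussion, the full-support argument shows that at most one of the columns \emph{orthogonal to} $\mathbf{1}$ can be $\{-1,1\}$-valued; the column $\mathbf{1}$ itself also has full support, so ``at most one column of $P$'' should read ``at most one column besides $\mathbf{1}$.'' In short: there is no erroneous step in what you wrote, but there is also no proof---correctly so, since none exists.
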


We posed Conjecture~\ref{conj:hadamard_zero} on MathOverflow \cite{over}, and via a combined effort of Max Alekseyev, Ilya Bogdanov, and Mikhail Tikhomirov, it has been proved in the following special cases:
\begin{itemize}
    \item when $n \leq 20$;

    \item when $n = p^k$ for some $k \in \mathbb{Z}_{+}$ and some odd prime $p$; and

    \item when $n = 2p^k$ for some $k \in \mathbb{Z}_{+}$ and some odd prime $p$.
\end{itemize}
It follows that $K_n$ has $\{-1,0,1\}$-bandwidth equal to $1$ when $n \in \{1,2,4,8,12,16,20\}$ (or more generally, whenever an $n \times n$ Hadamard matrix exists) and it has $\{-1,0,1\}$-bandwidth equal to $2$ when $n \in \{3,5,6,7,9,10,11,13,14,15,17,18,19\}$, when $n$ is an odd prime power, and when $n$ is double an odd prime power. The general case remains open.

\subsection{Graph complements and joins}\label{sec:complements}

We noted in Proposition~\ref{prop:neg101properties}(a) that the complement of a connected $\{-1,0,1\}$-diagonalizable graph is also $\{-1,0,1\}$-diagonalizable. The situation for complements of disconnected graphs is much more complicated:

\begin{theorem}\label{thm:complement_of_neg_zero_one}
    Suppose that $G$ is a $\{-1,0,1\}$-diagonalizable graph with $p$ connected components of sizes $v_1$, $v_2$, $\ldots$, $v_p$, respectively. Then $G^{\textup{c}}$ is $\{-1,0,1\}$-diagonalizable if and only if the vector $(v_1,v_2,\ldots,v_p)$ is balanced. Furthermore, if $G$ has $\{-1,0,1\}$-bandwidth $k$ then $G^{\textup{c}}$ has $\{-1,0,1\}$-bandwidth at most $\max\{p-1,k\}$.
\end{theorem}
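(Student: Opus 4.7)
The case $p=1$ is immediate from Proposition~\ref{prop:neg101properties}(a), so assume $p \ge 2$, which guarantees that $G^{\textup{c}}$ is connected and hence $\ker L(G^{\textup{c}}) = \mathrm{span}(\mathbf{1})$. The starting point is the identity $L(G) + L(G^{\textup{c}}) = nI - J$, which immediately gives that any eigenvector $\mathbf{w}$ of $L(G)$ with eigenvalue $\lambda$ that is orthogonal to $\mathbf{1}$ is an eigenvector of $L(G^{\textup{c}})$ with eigenvalue $n-\lambda$. Writing $C_1, \ldots, C_p$ for the components of $G$, we have $\ker L(G) = \mathrm{span}(\mathbf{1}_{C_1}, \ldots, \mathbf{1}_{C_p})$. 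Solving $L(G^{\textup{c}})\mathbf{w} = n\mathbf{w}$ reduces to $(J + L(G))\mathbf{w} = \mathbf{0}$, and since $J$ and $L(G)$ are both PSD this forces $\mathbf{w}$ into $\ker L(G) \cap \mathbf{1}^\perp$, a $(p-1)$-dimensional subspace. In parallel, every eigenvector of $L(G)$ with positive eigenvalue already lies in $\ker L(G)^\perp \subset \mathbf{1}^\perp$ and so is an eigenvector of $L(G^{\textup{c}})$ without modification.

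The key bridge to balancedness is that any $\{-1,0,1\}$-valued vector in $\ker L(G)$ is constant on each $C_i$, hence of the form $\sum_i c_i \mathbf{1}_{C_i}$ with $c_i \in \{-1,0,1\}$, and it lies additionally in $\mathbf{1}^\perp$ exactly when $\sum_i c_i v_i = 0$. Having $p-1$ linearly independent $\{-1,0,1\}$-vectors in the $n$-eigenspace of $L(G^{\textup{c}})$ is therefore equivalent to having a matrix $A \in \mathcal{M}_{p-1,p}$ with $\{-1,0,1\}$-entries and $\mathrm{null}(A) = \mathrm{span}(\mathbf{v})$, i.e., to $\mathbf{v}=(v_1,\ldots,v_p)$ being balanced. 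The ``only if'' direction then follows by reading off the coefficient vectors of the $p-1$ columns of any $\{-1,0,1\}$-diagonalizer of $L(G^{\textup{c}})$ that lie in the $n$-eigenspace and collecting them as the rows of $A$. For the ``if'' direction, given a witness $A$ for balancedness of $\mathbf{v}$ and a $\{-1,0,1\}$-diagonalizer $P$ of $L(G)$ of bandwidth $k$, we assemble $P'$ whose columns in order are $\mathbf{1}$, then the $p-1$ vectors $\sum_i A_{j,i}\mathbf{1}_{C_i}$, then the $n-p$ columns of $P$ that do not lie in $\ker L(G)$, kept in their original relative order. Linear independence of $P'$ holds because the first $p$ columns span $\ker L(G)$ (the $p \times p$ matrix with first row $\mathbf{1}^T$ and remaining rows those of $A$ is invertible, since $\mathbf{1}\cdot\mathbf{v} = n \neq 0$ places $\mathbf{1}$ outside the row space $\mathbf{v}^\perp$ of $A$), while the remaining columns lie in $\ker L(G)^\perp$.

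For the bandwidth bound, we compute $(P')^T P'$ directly. Orthogonality of $\ker L(G)$ and $\ker L(G)^\perp$ makes this matrix block diagonal with a $p \times p$ top block and an $(n-p)\times(n-p)$ bottom block. In the top block, the $(1,1)$ entry is $n$ and every other entry in its first row and column vanishes (because every row of $A$ is orthogonal to $\mathbf{v}$), so the only potentially nonzero entries sit in the lower-right $(p-1)\times(p-1)$ corner and the block has bandwidth at most $p-1$. The bottom block is the Gram matrix of the $n-p$ selected columns of $P$, which is a principal submatrix of $P^TP$ taken on a monotonically-indexed subset of rows and columns; such a restriction cannot increase bandwidth, so this block has bandwidth at most $k$. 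Combining the two blocks gives the bound $\max\{p-1,k\}$. The main obstacle, and conceptual heart of the proof, is the identification via the map $\mathbf{c}\mapsto \sum_i c_i \mathbf{1}_{C_i}$ of $\{-1,0,1\}$-vectors in $\ker L(G)\cap\mathbf{1}^\perp$ with $\{-1,0,1\}$-vectors in $\mathbf{v}^\perp\subset\R^p$; once that correspondence is in place, both the characterization and the bandwidth bound reduce to the direct construction described above.
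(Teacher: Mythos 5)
Your proof is correct and follows essentially the same route as the paper's: identify the eigenspaces of $L(G^{\textup{c}})$ via $L(G^{\textup{c}}) = nI - J - L(G)$, observe that the nonzero eigenspaces of $L(G)$ carry over unchanged while $\ker L(G)$ splits into $\mathrm{span}(\mathbf{1})$ and the $(p-1)$-dimensional $n$-eigenspace, and identify $\{-1,0,1\}$-bases of the latter with witness matrices $A$ for balancedness of $(v_1,\ldots,v_p)$. Your explicit Gram-matrix computation for the bandwidth bound (block-diagonal structure plus the principal-submatrix observation) is in fact somewhat more careful than the paper's brief orthogonality argument, but it is the same idea.
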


The above theorem provides a significant strengthening and generalization of several known results about $\{-1,0,1\}$-diagonalizability of graphs (however, the bandwidth bound $\max\{p-1,k\}$ is typically weaker than can be obtained by investigating specific instances):

\begin{itemize}
    \item Applying Theorem~\ref{thm:complement_of_neg_zero_one} to the graph $G = K_1 \sqcup K_1 \sqcup \cdots \sqcup K_1$ (which is trivially $\{-1,0,1\}$-diagonalizable) recovers the fact that, since the vector $(1,1,\ldots,1)$ is balanced, $G^\textup{c} = K_p$ is also $\{-1,0,1\}$-diagonalizable (refer back to Theorem~\ref{thm:complete_is_pm1}(a)).

    \item Applying Theorem~\ref{thm:complement_of_neg_zero_one} to the graph $G = K_2 \sqcup \cdots \sqcup K_2 \sqcup K_1 \sqcup \cdots \sqcup K_1$ (which is $\{-1,0,1\}$-diagonalizable by Proposition~\ref{prop:neg101properties}(b)) tells us (since the vector $(2\mathbf{1}_p,\mathbf{1}_q)$ is regular and thus balanced whenever $q \geq 2$) that $G^{\textup{c}}$ is $\{-1,0,1\}$-diagonalizable. In fact, one can easily verify that $G^\textup{c}$ is the complete graph $K_{2p+q}$ with $p$ independent edges removed, so we recover the results \cite[Lemma~4.8]{adm2021weakly} and \cite[Corollary~4.9]{adm2021weakly}, but with a weaker bandwidth bound.

    \item When $G = K_k \sqcup K_n^\textup{c}$ then (since $K_k$ and $K_n^\textup{c} = K_1 \sqcup \cdots \sqcup K_1$ are $\{-1,0,1\}$-diagonalizable) we learn from Theorem~\ref{thm:complement_of_neg_zero_one} that $G^\textup{c} = K_k^\textup{c} \vee K_n$ is $\{-1,0,1\}$-diagonalizable whenever $k \geq n$ (compare with \cite[Lemma~4.6]{adm2021weakly}, which was a similar result under the stronger hypothesis that $n-k \in \{0,1,2\}$).
    
    \item If $G$ is $\{-1,0,1\}$-diagonalizable and has all of its connected components of the same size, Theorem~\ref{thm:complement_of_neg_zero_one} tells us that $G^\textup{c}$ is also $\{-1,0,1\}$-diagonalizable (since the vector $(1,1,\ldots,1)$ is regular and thus balanced). Compare this with \cite[Lemma~2.5]{adm2021weakly}, where a similar result was established under the additional hypothesis that $G$ is regular.\footnote{However, the proof of \cite[Lemma~2.5]{adm2021weakly} does not make use of regularity at all; it could be removed from the list of hypotheses.}
\end{itemize}

\begin{proof}[Proof of Theorem~\ref{thm:complement_of_neg_zero_one}]
    We assume that $p > 1$ (i.e., $G$ is disconnected) throughout this proof, since the $p = 1$ case is covered by Proposition~\ref{prop:neg101properties}(a).
    
    If we denote the Laplacian matrix of $G$ by $L$ then the Laplacian matrix of $G^{\textup{c}}$ is $L^{\textup{c}} := nI - L - J$. It is straightforward to show that every eigenvector of $L$ corresponding to a non-zero eigenvalue is also an eigenvector of $L^{\textup{c}}$. We would like to make a similar claim about eigenvectors corresponding to the eigenvalue $0$ of $L$, but we have to be considerably more careful in this case.

    If we assume that the vertices of $G$ are arranged so that its first $v_1$ vertices belong to the first connected component, the next $v_2$ vertices belong to its second connected component, and so on, the $p$-dimensional eigenspace of $L$ corresponding to the eigenvalue $0$ is
    \begin{align}\label{eq:comp_zero_evecs}
        \big\{ (c_1\mathbf{1}_{v_1}, c_2\mathbf{1}_{v_2}, \ldots, c_p\mathbf{1}_{v_p}) : c_1,c_2,\ldots,c_p \in \R \big\}.
    \end{align}

    Since the complement of a disconnected graph is always connected, we know that $G^{\textup{c}}$ is connected and thus the eigenspace of $L^{\textup{c}}$ corresponding to eigenvalue $0$ is $1$-dimensional: it is the subspace of the set~\eqref{eq:comp_zero_evecs} with $c_1 = c_2 = \cdots = c_p$. Thus any other eigenvector $(c_1\mathbf{1}_{v_1}, c_2\mathbf{1}_{v_2}, \ldots, c_p\mathbf{1}_{v_p})$ of $L^{\textup{c}}$ in the set~\eqref{eq:comp_zero_evecs} must be orthogonal to $(\mathbf{1}_{v_1}, \mathbf{1}_{v_2}, \ldots, \mathbf{1}_{v_p})$, so must have $\sum_{j=1}^p c_jv_j = 0$. Conversely, any vector $\mathbf{v}$ of this form with $\sum_{j=1}^p c_jv_j = 0$ is indeed an eigenvector of $L^{\textup{c}}$ since
    \[
        (nI - L - J)\mathbf{v} = n\mathbf{v} - 0\mathbf{v} - \left(\sum_{j=1}^p c_jv_j\right)\mathbf{1} = n\mathbf{v}.
    \]

    It follows that we can find a basis of eigenvectors of $L^{\textup{c}}$ with entries from $\{-1,0,1\}$ if and only if the following conditions hold:
    \begin{itemize}
        \item[a)] each non-zero eigenspace of $L$ has a basis of eigenvectors with entries from $\{-1,0,1\}$; and

        \item[b)] there is a basis of the $(p-1)$-dimensional subspace
        \begin{align}\label{eq:comp_Gc_zero_evecs}
            \left\{ (c_1\mathbf{1}_{v_1}, c_2\mathbf{1}_{v_2}, \ldots, c_p\mathbf{1}_{v_p}) : \sum_{j=1}^p c_jv_j = 0 \right\}
        \end{align}
        consisting of vectors with entries from $\{-1,0,1\}$.
    \end{itemize}

    Property~(a) above holds by assumption: one of our hypotheses is that $G$ is $\{-1,0,1\}$-diagonalizable. We claim that property~(b) holds if and only if $(v_1,v_2,\ldots,v_p)$ is balanced. In fact, this follows almost immediately from Definition~\ref{defn:totally_balanced}: such a basis exists if and only if there is a linearly independent set $\{\mathbf{a}_1,\mathbf{a}_2,\ldots,\mathbf{a}_{p-1}\} \subset \{-1,0,1\}^p$ with $\mathbf{a}_j \cdot \mathbf{v} = 0$ for all $1 \leq j \leq p-1$ (each $\mathbf{a}_j$ vector is of the form $(c_1,c_2,\ldots,c_p)$ for some choice of $c_1,c_2,\ldots,c_p$ from Equation~\eqref{eq:comp_Gc_zero_evecs}). The matrix $A$ from Definition~\ref{defn:totally_balanced} is the one whose rows are $\mathbf{a}_1$, $\mathbf{a}_2$, $\ldots$, $\mathbf{a}_{p-1}$, which completes the proof that $G^{\textup{c}}$ is $\{-1,0,1\}$-diagonalizable if and only if $(v_1,v_2,\ldots,v_p)$ is balanced.

    The claim about the $\{-1,0,1\}$-bandwidth of $G^{\textup{c}}$ follows just from investigating the orthogonality relationships between the eigenvectors in the $\{-1,0,1\}$-diagonalization of $L^{\textup{c}}$ that we constructed. The eigenvector $\mathbf{1}$ is orthogonal to all other eigenvectors, the $p-1$ eigenvectors from the set~\eqref{eq:comp_Gc_zero_evecs} are orthogonal to all other eigenvectors (but not necessarily to each other), and the remaining eigenvectors were all eigenvectors of $L$ itself. Since $G$ has $\{-1,0,1\}$-bandwidth $k$, this gives an upper bound of $\max\{1,p-1,k\} = \max\{p-1,k\}$ on the $\{-1,0,1\}$-bandwidth of $G^{\textup{c}}$. 
\end{proof}

The above result gives us a reasonably complete characterization of when the join of $\{-1,0,1\}$-diagonalizable graphs is also $\{-1,0,1\}$-diagonalizable:

\begin{corollary}\label{cor:join_neg_zero_one}
    Let $p \in \mathbb{Z}_{+}$. Suppose that, for $1 \leq j \leq p$, $G_j$ is a connected $v_j$-vertex $\{-1,0,1\}$-diagonalizable graph with $\{-1,0,1\}$-bandwidth $k_j$. Then the join
    \[
        G_1^{\textup{c}} \vee G_2^{\textup{c}} \vee \cdots \vee G_p^{\textup{c}}
    \]
    is $\{-1,0,1\}$-diagonalizable if and only if the vector $(v_1, v_2, \ldots, v_p)$ is balanced. Furthermore, it has $\{-1,0,1\}$-bandwidth at most $\max\{p-1,k_1,k_2,\ldots,k_p\}$.
\end{corollary}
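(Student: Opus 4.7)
The plan is to recognize this corollary as a direct consequence of combining Proposition~\ref{prop:neg101properties}(b) with Theorem~\ref{thm:complement_of_neg_zero_one}, via the standard identity
\[
    (G_1 \sqcup G_2 \sqcup \cdots \sqcup G_p)^{\textup{c}} = G_1^{\textup{c}} \vee G_2^{\textup{c}} \vee \cdots \vee G_p^{\textup{c}},
\]
which rewrites the join of complements as the complement of a disjoint union.

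Concretely, I would proceed in three steps. First, I would set $G := G_1 \sqcup G_2 \sqcup \cdots \sqcup G_p$, so that the graph under consideration is $G^{\textup{c}}$. By Proposition~\ref{prop:neg101properties}(b) applied inductively (on $p$), $G$ is $\{-1,0,1\}$-diagonalizable with $\{-1,0,1\}$-bandwidth at most $k := \max\{k_1, k_2, \ldots, k_p\}$. Second, since each $G_j$ is connected, the connected components of $G$ are precisely $G_1, G_2, \ldots, G_p$, and so they have sizes $v_1, v_2, \ldots, v_p$ respectively. Third, I would invoke Theorem~\ref{thm:complement_of_neg_zero_one} with this $G$: it yields that $G^{\textup{c}}$ is $\{-1,0,1\}$-diagonalizable if and only if $(v_1, v_2, \ldots, v_p)$ is balanced, and in the affirmative case it provides the bandwidth bound
\[
    \max\{p-1, k\} = \max\{p-1, k_1, k_2, \ldots, k_p\},
\]
which is exactly what the corollary asserts.

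There is essentially no obstacle here: both directions of the ``if and only if'' and the bandwidth estimate are transported through the complement-of-disjoint-union identity without any additional combinatorial input. The only minor bookkeeping point worth flagging is the $p=1$ edge case, where the join is just $G_1^{\textup{c}}$ and the ``balanced'' hypothesis becomes trivial (any non-zero one-entry vector is balanced by the convention adopted after Fact~\ref{fact:totally_balanced_wlog}); this is consistent with Proposition~\ref{prop:neg101properties}(a), since $G_1$ is connected, so the two characterizations agree.
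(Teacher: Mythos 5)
Your proof is correct and follows exactly the paper's own argument: rewrite the join of complements as the complement of the disjoint union, apply Proposition~\ref{prop:neg101properties}(b) to get $\{-1,0,1\}$-diagonalizability of $G_1 \sqcup \cdots \sqcup G_p$ with bandwidth at most $\max\{k_1,\ldots,k_p\}$, and then invoke Theorem~\ref{thm:complement_of_neg_zero_one}. Your extra remark on the $p=1$ edge case is a harmless (and reasonable) addition.
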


\begin{proof}
    Recall that the join of graphs can be written as the complement of the disjoint union of their complements:
    \begin{align}\label{eq:join_via_complement}
        G_1^{\textup{c}} \vee G_2^{\textup{c}} \vee \cdots \vee G_p^{\textup{c}} = (G_1 \sqcup G_2 \sqcup \cdots \sqcup G_p)^{\textup{c}}.
    \end{align}
    
    Proposition~\ref{prop:neg101properties}(b) tells us that $G_1 \sqcup G_2 \sqcup \cdots \sqcup G_p$ is $\{-1,0,1\}$-diagonalizable with $\{-1,0,1\}$-bandwidth at most $\max\{k_1,k_2,\ldots,k_p\}$. Now we just use the fact that $G_1 \sqcup G_2 \sqcup \cdots \sqcup G_p$ is a graph with $p$ connected components that have $v_1$, $v_2$, $\ldots$, $v_p$ vertices, respectively, so Theorem~\ref{thm:complement_of_neg_zero_one} tells us that its complement~\eqref{eq:join_via_complement} is $\{-1,0,1\}$-diagonalizable if and only if $(v_1, v_2, \ldots, v_p)$ is balanced, and its $\{-1,0,1\}$-bandwidth at most $\max\{p-1,k_1,k_2,\ldots,k_p\}$.
\end{proof}

\subsection{Complete multipartite graphs}\label{sec:complete_multipartite}

If we specialize Corollary~\ref{cor:join_neg_zero_one} even further, we learn exactly which complete multipartite graphs are $\{-1,0,1\}$-diagonalizable:

\begin{corollary}\label{cor:complete_multipartite_graph}
    The complete multipartite graph $K_{v_1,v_2,\ldots,v_p}$ is $\{-1,0,1\}$-diagonalizable if and only if the vector $(v_1,v_2,\ldots,v_p)$ is balanced. If it is $\{-1,0,1\}$-diagonalizable then its $\{-1,0,1\}$-bandwidth is at most $\max\{2,p-1\}$.
\end{corollary}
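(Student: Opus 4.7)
The plan is to recognize the complete multipartite graph as a join of empty graphs and then invoke Corollary~\ref{cor:join_neg_zero_one} essentially verbatim. Specifically, I will use the identity
\[
    K_{v_1,v_2,\ldots,v_p} = K_{v_1}^{\textup{c}} \vee K_{v_2}^{\textup{c}} \vee \cdots \vee K_{v_p}^{\textup{c}},
\]
since $K_{v_j}^{\textup{c}}$ is exactly the empty graph on $v_j$ vertices, and joining empty graphs places an edge between every pair of vertices lying in different groups, which is the defining property of the complete multipartite graph.

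Next I will verify that each $K_{v_j}$ satisfies the hypotheses of Corollary~\ref{cor:join_neg_zero_one}: it is connected, and by Theorem~\ref{thm:complete_is_pm1}(a) it is $\{-1,0,1\}$-diagonalizable with $\{-1,0,1\}$-bandwidth $k_j \leq 2$. Substituting $G_j = K_{v_j}$ into Corollary~\ref{cor:join_neg_zero_one} then immediately yields the equivalence: the join is $\{-1,0,1\}$-diagonalizable if and only if $(v_1,v_2,\ldots,v_p)$ is balanced. Moreover, the bandwidth bound from that corollary gives
\[
    \max\{p-1, k_1, k_2, \ldots, k_p\} \leq \max\{p-1, 2\} = \max\{2, p-1\},
\]
which is exactly the claimed bound.

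Finally, I would briefly address the edge case $p = 1$ (if one wants to treat it separately from the $p \geq 2$ case built into the balanced vector definition): here $K_{v_1}$ is just the empty graph on $v_1$ vertices, its Laplacian is the zero matrix, and the identity is a valid $\{-1,0,1\}$-diagonalizing matrix with $I^T I = I$ of bandwidth $1 \leq \max\{2, 0\}$; the one-entry vector $(v_1)$ is balanced by the extension given after Fact~\ref{fact:totally_balanced_wlog}, so the statement holds trivially. I do not expect any real obstacle in this proof — it is essentially a one-line specialization of Corollary~\ref{cor:join_neg_zero_one} once the join decomposition of $K_{v_1,\ldots,v_p}$ is written down.
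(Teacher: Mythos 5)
Your proof is correct and follows exactly the paper's own argument: write $K_{v_1,\ldots,v_p}$ as the join $K_{v_1}^{\textup{c}} \vee \cdots \vee K_{v_p}^{\textup{c}}$, use Theorem~\ref{thm:complete_is_pm1}(a) to get that each $K_{v_j}$ has $\{-1,0,1\}$-bandwidth at most $2$, and apply Corollary~\ref{cor:join_neg_zero_one}. The only (harmless) addition is your explicit treatment of the $p=1$ edge case, which the paper leaves implicit.
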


\begin{proof}
    Recall that the complete multipartite graph is the join of complements of complete graphs:
    \begin{align}\label{eq:comp_multi_via_join_of_complete}
        K_{v_1,v_2,\ldots,v_p} = K_{v_1}^\textup{c} \vee K_{v_2}^\textup{c} \vee \cdots \vee K_{v_p}^\textup{c}.
    \end{align}
    Theorem~\ref{thm:complete_is_pm1}(a) tells us that every complete graph is $\{-1,0,1\}$-diagonalizable with $\{-1,0,1\}$-bandwidth at most $2$. Corollary~\ref{cor:join_neg_zero_one} then tells us that the join~\eqref{eq:comp_multi_via_join_of_complete} is $\{-1,0,1\}$-diagonalizable if and only if $(v_1,v_2,\ldots,v_p)$ is balanced, and its $\{-1,0,1\}$-bandwidth is at most $\max\{2,p-1\}$.
\end{proof}

Corollary~\ref{cor:complete_multipartite_graph} can be thought of as a generalization of \cite[Lemma~2.6]{adm2021weakly}, which said that a complete bipartite graph $K_{m,n}$ is $\{-1,0,1\}$-diagonalizable if and only if $m = n$ (this lemma was stated in terms of diagonalizability by weakly Hadamard matrices, but the proof applies just as well to $\{-1,0,1\}$-diagonalizability). In particular, if $p = 2$ then the only balanced vectors in $\mathbb{Z}_{+}^p$ are the multiples of $(1,1)$ (refer to Table~\ref{tab:totally_balanced_smallp}), so Corollary~\ref{cor:complete_multipartite_graph} says that $K_{m,n}$ is $\{-1,0,1\}$-diagonalizable if and only if $m = n$.

Similarly, in the $p = 3$ case, the only balanced vectors in $\mathbb{Z}_{+}^p$ are the multiples and permutations of $(1,1,1)$ and $(2,1,1)$ (refer back to Table~\ref{tab:totally_balanced_smallp}), so the $p = 3$ case of Corollary~\ref{cor:complete_multipartite_graph} tells us that $K_{\ell,m,n}$ is $\{-1,0,1\}$-diagonalizable if and only if either $\ell = m = n$ or (assuming $\ell \geq m \geq n$) $\ell = 2m = 2n$.

We now give a similar characterization of $\{-1,1\}$-diagonlizability of complete multipartite graphs, which is even more explicit (and much more restrictive):

\begin{theorem}\label{thm:complete_multipartite_graph_oneneg}
    The complete multipartite graph $K_{v_1,v_2,\ldots,v_p}$ is $\{-1,1\}$-diagonalizable if and only if $v_1 = v_2 = \cdots = v_p$, and $p, v_1 \in \{1\} \cup \{2k : k \in \mathbb{Z}_{+}\}$.
\end{theorem}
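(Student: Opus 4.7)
The plan is to determine the Laplacian spectrum of $K_{v_1,\ldots,v_p}$ explicitly and read off exactly when a $\{-1,1\}$-eigenbasis can exist. Writing $L$ for the Laplacian and using the identity $L(G^{\textup{c}}) = nI - J - L(G)$ applied to $G = K_{v_1}\sqcup\cdots\sqcup K_{v_p}$, one obtains the eigenspaces of $L$: the span of $\mathbf{1}$ at eigenvalue $0$; the $(p-1)$-dimensional space $\{(c_1\mathbf{1}_{v_1},\ldots,c_p\mathbf{1}_{v_p}) : \sum_j c_jv_j = 0\}$ at eigenvalue $n$; and, for each distinct part size $w$, an eigenspace $E_w$ at eigenvalue $n-w$ consisting of vectors supported on the union of classes of size $w$ and orthogonal to $\mathbf{1}_w$ on each such class.

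For the ``only if'' direction, I would exploit the decisive observation that a $\{-1,1\}$-vector has no zero entries. If the sizes are not all equal, then the largest distinct size $w$ satisfies $w \geq 2$ and $E_w$ is nontrivial; but every vector in $E_w$ is forced to vanish outside the classes of size $w$, so no $\{-1,1\}$-vector lies in $E_w$, contradicting the existence of a $\{-1,1\}$-eigenbasis. Hence all parts share a common size $v$. If $v = 1$ the graph is $K_p$, and Corollary~\ref{cor:is_one_neg} forces $p \in \{1\}\cup\{2k\}$. If $v \geq 2$, the eigenspace at $n-v$ must contain $\{-1,1\}$-vectors whose restriction to each class has zero sum, forcing $v$ to be even; and the eigenspace at $n$ must contain $p-1$ linearly independent $\{-1,1\}$-vectors of the form $(c_1\mathbf{1}_v,\ldots,c_p\mathbf{1}_v)$ with $(c_1,\ldots,c_p) \in \{-1,1\}^p$ summing to zero, which by Lemma~\ref{lem:one_neg_circulant} forces $p \in \{1\}\cup\{2k\}$.

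For the ``if'' direction, assume $v_1 = \cdots = v_p = v$ with $p,v \in \{1\}\cup\{2k\}$, and identify $\R^n = \R^v \otimes \R^p$ so that the second factor indexes the $p$ classes; the three eigenspaces then become $\mathrm{span}(\mathbf{1}_v\otimes\mathbf{1}_p)$, $\mathbf{1}_v\otimes\mathbf{1}_p^\perp$, and $\mathbf{1}_v^\perp \otimes \R^p$. Applying Lemma~\ref{lem:one_neg_circulant} at size $p$ produces $p-1$ vectors $\mathbf{c}_k \in \{-1,1\}^p$ orthogonal to $\mathbf{1}_p$, plus an invertible $\{-1,1\}^{p\times p}$ matrix with columns $\mathbf{d}_1,\ldots,\mathbf{d}_p$; applying it at size $v$ produces $v-1$ vectors $\mathbf{w}_k \in \{-1,1\}^v$ orthogonal to $\mathbf{1}_v$. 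Then $\mathbf{1}_v\otimes\mathbf{1}_p$, the vectors $\mathbf{1}_v\otimes\mathbf{c}_k$, and the tensors $\mathbf{w}_k\otimes\mathbf{d}_\ell$ jointly supply $1 + (p-1) + p(v-1) = n$ linearly independent $\{-1,1\}$-eigenvectors of $L$. The main obstacle will be the support argument in the ``only if'' step—precisely pinning down the eigenspaces $E_w$ and using the no-zero-entries property of $\{-1,1\}$-vectors to rule out mixed part sizes; the remaining parity constraints on $p$ and $v$ then drop out directly from Lemma~\ref{lem:one_neg_circulant}.
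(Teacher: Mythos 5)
Your proposal is correct, and its overall architecture (explicit eigenspace decomposition, a support argument to force equal part sizes, parity to force $v$ and $p$ even or $1$, and Lemma~\ref{lem:one_neg_circulant} to build the eigenbasis) matches the paper's. The one place you genuinely diverge is the construction of the $\{-1,1\}$-basis for the large eigenspace at eigenvalue $n-v$ in the ``if'' direction, and your version is the better one. The paper takes the $v-1$ columns $\mathbf{x_1},\ldots,\mathbf{x_{v-1}}$ of the matrix supplied by Lemma~\ref{lem:one_neg_circulant} and claims that the concatenations $(\mathbf{x_{j_1}},\ldots,\mathbf{x_{j_p}})$ in~\eqref{eq:spanning_set_1negzero} span the eigenspace~\eqref{eq:v1tilde_eigenspace}; this is false in general, since expressing each block in the basis $\{\mathbf{x_k}\}$ of $\mathbf{1}_v^{\perp}$ shows every vector in that span has the same coordinate-sum in each block (the marginals of the coefficient tensor all share one total), so the span has codimension $p-1$ in the eigenspace. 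Already for $K_{2,2}$ the set~\eqref{eq:spanning_set_1negzero} is the single vector $(1,-1,1,-1)$ inside a two-dimensional eigenspace, and the missing eigenvector $(1,-1,-1,1)$ is exactly of your form $\mathbf{w}\otimes\mathbf{d}$ with $\mathbf{d}\neq\mathbf{1}_p$. Your tensor construction $\mathbf{w}_k\otimes\mathbf{d}_\ell$, with $\{\mathbf{d}_\ell\}$ a full $\{-1,1\}$-basis of $\R^p$ and $\{\mathbf{w}_k\}$ linearly independent in $\mathbf{1}_v^{\perp}$, gives $p(v-1)$ independent $\{-1,1\}$-vectors spanning $\mathbf{1}_v^{\perp}\otimes\R^p$, so it closes this gap cleanly. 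Two trivial remarks: invoking Lemma~\ref{lem:one_neg_circulant} to conclude that $\sum_j c_j=0$ with $c_j\in\{-1,1\}$ forces $p$ even is more than you need (plain parity suffices, and for the ``only if'' direction a single such vector, rather than $p-1$ independent ones, already does the job), and, like the paper, you gloss over the degenerate case $p=1$, where the Laplacian is zero and the eigenspace bookkeeping collapses.
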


\begin{proof}
    If we define $\widetilde{v_j} := \sum_{i \neq j} v_i$ then the Laplacian matrix of $K_{v_1,v_2,\ldots,v_p}$ is
    \begin{align*}
        L = \begin{bmatrix}
            \widetilde{v_1}I_{v_1} & -J_{v_1,v_2} & -J_{v_1,v_3} & \cdots & -J_{v_1,v_p} \\
            -J_{v_2,v_1} & \widetilde{v_2}I_{v_2} & -J_{v_2,v_3} & \cdots & -J_{v_2,v_p} \\
            -J_{v_3,v_1} & -J_{v_3,v_2} & \widetilde{v_3}I_{v_3} & \cdots & -J_{v_3,v_p} \\
            \vdots & \vdots & \vdots & \ddots & \vdots \\
            -J_{v_p,v_1} & -J_{v_p,v_2} & -J_{v_p,v_3} & \cdots & \widetilde{v_p}I_{v_p}
        \end{bmatrix}.
    \end{align*}
    It is straightforward to verify that, for all $1 \leq i \leq p$ and $1 \leq j \leq v_i-1$, the vector
    \begin{align}\label{eq:cmg_first_evecs}
        (\mathbf{0}_{v_1}, \ldots, \mathbf{0}_{v_{i-1}}, \mathbf{e}_j - \mathbf{e}_{j+1}, \mathbf{0}_{v_{i+1}}, \ldots, \mathbf{0}_{v_p})
    \end{align}
    is an eigenvector of $L$ with corresponding eigenvalue equal to $\widetilde{v_i}$. Together with $\mathbf{1}$ (i.e., the eigenvector of $L$ with corresponding eigenvalue equal to $0$), this gives us a set of $(\sum_i v_i) - (p-1)$ linearly independent eigenvectors of $L$. The remaining $(p-1)$-dimensional eigenspace has corresponding eigenvalue $\sum_i v_i$ and is equal to
    \begin{align}\label{eq:cmg_last_evecs}
        \mathrm{span}\left\{ (c_1\mathbf{1}_{v_1}, c_2\mathbf{1}_{v_2}, \ldots, c_p\mathbf{1}_{v_p}) : \sum_i c_iv_i = 0 \right\}.
    \end{align}

    With the structure of the eigenspaces of $L$ understood, we now begin the ``only if'' direction of the proof. Suppose (for the sake of establishing a contradiction) that there exist some indices $i,j$ such that $v_i \neq v_j$ and thus $\widetilde{v_i} \neq \widetilde{v_j}$. Since the eigenspace of $L$ corresponding to the eigenvalue $\widetilde{v_i}$ is spanned by vectors of the form~\eqref{eq:cmg_first_evecs}, which all have $\mathbf{0}_{v_j}$ in their $j$-th block. It follows that there is no $\{-1,1\}$ vector in this eigenspace, so $K_{v_1,v_2,\ldots,v_p}$ is not $\{-1,1\}$-diagonalizable.

    Now suppose that $K_{v_1,v_2,\ldots,v_p}$ is $\{-1,1\}$-diagonalizable. To see that $v_1$ must be even or equal to $1$, recall (again, from Equation~\eqref{eq:cmg_first_evecs}) that if $v_1 > 1$ then every eigenvector corresponding to the eigenvalue $\widetilde{v_1}$ must have its first $v_1$ entries add to $0$. If such an eigenvector only has entries from $\{-1,1\}$ then $v_1$ must be even.

    To see that $p$ must be even or equal to $1$, recall that if $p > 1$ then the eigenspace corresponding to the eigenvalue $\sum_i v_i$ is as in Equation~\eqref{eq:cmg_last_evecs}. If a vector $(a_1\mathbf{1}_{v_1}, a_2\mathbf{1}_{v_2},\ldots,a_p\mathbf{1}_{v_p})$ with $\sum_{i=1}^p a_iv_i = v_1\sum_{i=1}^p a_i = 0$ has $a_i \in \{-1,1\}$ for all $1 \leq i \leq p$ then $p$ must be even. This complete the proof of the ``only if'' direction.

    Conversely, suppose that $v_1 = v_2 = \cdots = v_p$ and $p, v_1 \in \{1\} \cup \{2k : k \in \mathbb{Z}_{+}\}$. Then the Laplacian matrix $L$ of $K_{v_1,v_2,\ldots,v_p}$ has eigenvalues $0$, $\widetilde{v_1}$ and $pv_1$ with multiplicities $1$, $p(v_1-1)$ and $p-1$, respectively. The eigenspace corresponding to the eigenvalue $0$ is spanned by the vector $\mathbf{1}$. The eigenspace corresponding to the eigenvalue $\widetilde{v_1}$ is
    \begin{align}\label{eq:v1tilde_eigenspace}
        \big\{ (\mathbf{x_1}, \mathbf{x_2}, \ldots, \mathbf{x_p} ) : \mathbf{x_j} \cdot \mathbf{1} = 0 \ \text{for all} \ 1 \leq j \leq p \big\}.
    \end{align}
    If $v_1 = 1$ then this eigenspace is $0$-dimensional. If $v_1 > 1$ is even then, by Lemma~\ref{lem:one_neg_circulant}, there exists a full rank matrix $X \in \mathcal{M}_{v_1,v_1-1}$ with entries from $\{-1,1\}$ whose columns are all orthogonal to $\mathbf{1}$. If we let $\mathbf{x_j}$ be the $j$-th column of $X$ ($1 \leq j \leq v_1-1$), then the set
    \begin{align}\label{eq:spanning_set_1negzero}
        \big\{ (\mathbf{x_{j_1}}, \mathbf{x_{j_2}}, \ldots, \mathbf{x_{j_p}} ) : 1 \leq j_i \leq v_1-1 \ \text{for all} \ 1 \leq i \leq p \big\}
    \end{align}
    spans the eigenspace~\eqref{eq:v1tilde_eigenspace}, and each of its members has entries coming from $\{-1,1\}$. There is thus some subset of the set~\eqref{eq:spanning_set_1negzero} that is a $\{-1,1\}$-basis of the eigenspace~\eqref{eq:v1tilde_eigenspace}.

    Finally, the eigenspace corresponding to the eigenvalue $pv_1$ is
    \begin{align}\label{eq:pv1_eigenspace}
        \big\{ \mathbf{x} \otimes \mathbf{1}_{v_1} : \mathbf{x} \cdot \mathbf{1}_p = 0 \big\}.
    \end{align}
    If $p = 1$ then this eigenspace is $0$-dimensional. If $p > 1$ is even then, similarly to before, Lemma~\ref{lem:one_neg_circulant} tells us that there exists a full rank matrix $X \in \mathcal{M}_{p-1}$ with entries from $\{-1,1\}$ whose columns are all orthogonal to $\mathbf{1}$. If we let $\mathbf{x_j}$ be the $j$-th column of $X$ ($1 \leq j \leq p-1$), then the set
    \[
        \big\{ \mathbf{x_{j}} \otimes \mathbf{1}_{v_1} : 1 \leq j \leq p-1 \big\}
    \]
    is a $\{-1,1\}$-basis of the eigenspace~\eqref{eq:pv1_eigenspace}. It follows that every eigenspace has a basis consisting of vectors whose entries belong to $\{-1,1\}$, which completes the proof.
\end{proof}


Theorem~\ref{thm:complete_multipartite_graph_oneneg} shows that there are very few (fewer than $n$) complete multipartite graphs on $n$ vertices that are $\{-1,1\}$-diagonalizable. By contrast, our final result of this subsection gives a lower bound that shows that there are many $\{-1,0,1\}$-diagonalizable complete multipartite graphs on $n$ vertices. More specifically, there are superpolynomially many of them:

\begin{corollary}\label{cor:lots_of_diag_graphs}
    There exists $N \in \mathbb{Z}_{+}$ such that, whenever $n \geq N$, there are at least $13^{\sqrt{n}}$ complete multipartite graphs on $n$ vertices that are $\{-1,0,1\}$-diagonalizable but not $\{-1,1\}$-diagonalizable.
\end{corollary}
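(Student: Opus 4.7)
By Corollary \ref{cor:complete_multipartite_graph}, $\{-1,0,1\}$-diagonalizable complete multipartite graphs on $n$ vertices correspond to balanced partitions of $n$; by Theorem \ref{thm:complete_multipartite_graph_oneneg}, at most $n$ of them are also $\{-1,1\}$-diagonalizable (they correspond to divisor pairs of $n$ with parity constraints). My plan is therefore to exhibit at least $13^{\sqrt n}+n$ balanced partitions of $n$ for all sufficiently large $n$.

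The main ingredient will be the following sufficient condition: if $\mu=(\mu_1,\ldots,\mu_k)$ is a partition of some $m$ with all parts $\ge 2$ and $\mu_1\le n-m$, then $\lambda := (\mu,\mathbf{1}_{n-m})$ is a balanced partition of $n$. I would verify this by writing down an explicit $\{-1,0,1\}$-matrix witnessing Definition \ref{defn:totally_balanced}: for each $1\le i\le k$, take a row with $-1$ in position $i$ and $\mu_i$ entries equal to $+1$ among the trailing $n-m$ positions (valid since $\mu_i\le\mu_1\le n-m$); then append one adjacent-difference row for each consecutive pair of trailing $1$'s. These rows are linearly independent, all orthogonal to $\lambda$, and give the required rank $|\lambda|-1$. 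Since $\mu$ can be recovered as the non-$1$ parts of $\lambda$, different $\mu$'s yield different balanced partitions of $n$.

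To count such $\mu$, I set $r := n - |\mu|$, so the condition becomes that $\mu$ is a partition of $n - r$ with all parts in $\{2, \ldots, r\}$. My plan is to take $r := C\sqrt n \log n$ for a sufficiently large constant $C$: by the Erd\H{o}s--Lehner asymptotic for the typical largest part of a uniformly random partition, a positive fraction of partitions of $n-r$ satisfy $\mu_1\le r$, while a $\Theta(1/\sqrt n)$-fraction of partitions of $n-r$ have no $1$ parts, yielding at least $\Omega(p(n-r)/\sqrt n)$ valid $\mu$.

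Finally, the Hardy--Ramanujan asymptotic $p(n)\sim(4n\sqrt 3)^{-1}\exp(\pi\sqrt{2n/3})$, combined with the crucial numerical inequality $\pi\sqrt{2/3}\approx 2.5651 > \log 13\approx 2.5649$, would give $p(n-r) \ge \mathrm{poly}(n)\cdot 13^{\sqrt n}$ for $n$ sufficiently large, more than enough to absorb the $\Omega(1/\sqrt n)$ loss above as well as the $+n$ term. The hardest part will be carefully bookkeeping the polynomial error factors from Hardy--Ramanujan and from our restrictions (no $1$ parts, max part $\le r$), since the inequality $\pi\sqrt{2/3}>\log 13$ is extremely tight (a gap of only about $1.5\times 10^{-4}$): the threshold $N$ will need to be large enough that $0.00015\sqrt n$ dominates an $O(\log n)$ error term arising from these restrictions.
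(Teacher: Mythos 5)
Your proposal is correct in outline, but it takes a genuinely different --- and considerably more laborious --- route than the paper. The paper's proof is essentially three lines: it observes that every regular vector summing to $n$ is balanced (Proposition~\ref{prop:regular_sequences}), that regular vectors summing to $n$ are in bijection with complete partitions of $n-1$, and then simply cites the known asymptotic $f(n) \sim e^{\pi\sqrt{2n/3}}/(4\sqrt{3}\,n)$ for the number $f(n)$ of complete partitions, so that $f(n-1)-n > 13^{\sqrt{n}}$ for large $n$ because $e^{\pi\sqrt{2/3}} \approx 13.0019 > 13$. Your family $\lambda = (\mu,\mathbf{1}_{n-m})$ with all parts of $\mu$ at least $2$ and $\mu_1 \le n-m$ is in fact a subfamily of these regular vectors (each such $\lambda$ satisfies Definition~\ref{defn:regular_sequence}), and your explicit witness matrix is valid: the adjacent-difference rows force the trailing coordinates to a common value $c$ and the row for part $i$ forces coordinate $i$ to equal $\mu_i c$, so the null space is exactly $\mathrm{span}(\lambda)$ and the rank is $|\lambda|-1$ as required. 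What your approach buys is self-containedness --- you re-derive a (weaker but sufficient) lower bound on the number of such partitions from Hardy--Ramanujan rather than quoting the complete-partition asymptotic --- at the cost of the delicate tail bookkeeping you flag. One step as written is logically loose: intersecting a ``positive fraction'' event (largest part $\le r$) with a ``$\Theta(1/\sqrt{n})$-fraction'' event (no parts equal to $1$) does not by itself give an $\Omega(1/\sqrt{n})$ fraction, since the intersection could a priori be empty. The fix is routine but should be stated: for $C$ large the number of partitions of $N=n-r$ with largest part exceeding $r$ is at most $\sum_{j>r} p(N-j) \le N\,p(N-r) = p(N)\cdot N^{1-\Omega(C)} = o\bigl(p(N)/\sqrt{N}\bigr)$, so subtracting this tail from the count of $1$-free partitions still leaves $\Omega\bigl(p(N)/\sqrt{N}\bigr)$. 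With that repair, and since $e^{0.00015\sqrt{n}}$ eventually dominates every polynomial factor, your argument goes through; both proofs ultimately rest on the same razor-thin numerical margin $\pi\sqrt{2/3} > \ln 13$.
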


\begin{proof}
    If $f(n)$ denotes the number of complete partitions of $n$ then it is known (see \cite{oeisA126796} and the references therein) that
    \[
        \lim_{n\rightarrow\infty} \frac{4\sqrt{3} n f(n)}{e^{\pi\sqrt{2n/3}}} = 1.
    \]
    Since $e^{\pi\sqrt{2/3}} \approx 13.0019 > 13$, it follows that there exists $N \in \mathbb{Z}_{+}$ such that
    \[
        \frac{f(n-1) - n}{13^{\sqrt{n}}} > 1
    \]
    whenever $n \geq N$. Since $f(n-1)$ equals the number of regular vectors with sum equal to $n$, we conclude that $f(n-1) - n$ is a lower bound on the number of complete multipartite graphs on $n$ vertices that are $\{-1,0,1\}$-diagonalizable but not $\{-1,1\}$-diagonalizable. The result follows.
\end{proof}

For small values of $n$, we can compute the exact number of $n$-vertex $\{-1,0,1\}$-diagonalizable complete multipartite graphs by computing the number of balanced vectors in $\Z^p_{+}$ (for any $p$) with non-decreasing entries that sum to $n$. A list of all such vectors for $1 \leq n \leq 9$ is provided in Table~\ref{tab:totally_balanced_smallsum}.

\begin{table}[!htb]
    \centering
    \begin{tabular}{cl}\toprule
        $n$ & balanced vectors $\mathbf{v}$ \\\toprule
        $1$ & $(1)$ \\\midrule
        $2$ & $(1,1)$, $(2)$ \\\midrule
        $3$ & $(1,1,1)$, $(3)$ \\\midrule
        $4$ & $(1,1,1,1)$, $(2,1,1)$, $(2,2)$, $(4)$ \\\midrule
        $5$ & $(1,1,1,1,1)$, $(2,1,1,1)$, $(5)$ \\\midrule
        $6$ & $(1,1,1,1,1,1)$, $(2,1,1,1,1)$, $(2,2,1,1)$, $(3,1,1,1)$, $(2,2,2)$, $(3,3)$, $(6)$ \\\midrule
        $7$ & $(1,1,1,1,1,1,1)$, $(2,1,1,1,1,1)$, $(2,2,1,1,1)$, $(3,1,1,1,1)$, $(3,2,1,1)$, $(7)$ \\\midrule
        $8$ & $(1,1,1,1,1,1,1,1)$, $(2,1,1,1,1,1,1)$, $(2,2,1,1,1,1)$, $(3,1,1,1,1,1)$, $(2,2,2,1,1)$, \\
        & $(3,2,1,1,1)$, $(4,1,1,1,1)$, $(2,2,2,2)$, $(3,2,2,1)$, $(4,2,1,1)$, $(4,2,2)$, $(4,4)$, $(8)$ \\\midrule
        $9$ & $(1,1,1,1,1,1,1,1,1)$, $(2,1,1,1,1,1,1,1)$, $(2,2,1,1,1,1,1)$, $(3,1,1,1,1,1,1)$, $(2,2,2,1,1,1)$, \\
        & $(3,2,1,1,1,1)$, $(4,1,1,1,1,1)$, $(3,2,2,1,1)$, $(3,3,1,1,1)$, $(4,2,1,1,1)$, $(3,3,3)$, $(9)$ \\\bottomrule
    \end{tabular}
    \caption{A list of all balanced vectors $\mathbf{v} \in \Z^p_{+}$ whose sum is $n$, up to re-ordering of entries, for $1 \leq n \leq 9$. Compare with Table~\ref{tab:totally_balanced_smallp} and with the complete multipartite graphs listed in Table~\ref{tab:small_graphs}.}\label{tab:totally_balanced_smallsum}
\end{table}

\subsection{Graphs on nine or fewer vertices}\label{sec:small_graphs}

Here (in Table~\ref{tab:small_graphs}) we list all simple connected graphs on $1 \leq n \leq 9$ vertices that are $\{-1,0,1\}$- or $\{-1,1\}$-diagonalizable, and their corresponding bandwidths. These graphs and their diagonalizations were computed via brute-force computer search; details and code are provided at \cite{matlab}. In most cases when $n \in \{1,2\} \cup \{4k : k \in \mathbb{Z}_{+}\}$ the bandwidths are $1$, implying that the quasi-orthogonality condition of weakly Hadamard diagonalizable is typically only necessary for graphs of other sizes.

Most simple connected weakly Hadamard diagonalizable graphs on nine or fewer vertices were listed in \cite[Appendix~A]{adm2021weakly}. We use an asterisk ($*$) in the ``Name'' column of Table~\ref{tab:small_graphs} to identify the $\{-1,0,1\}$-diagonalizable graphs that were absent from the list appearing in \cite{adm2021weakly} (indicating that they have $\{-1,0,1\}$-bandwidth strictly greater than $2$, or they were missed in the computer search performed in \cite{adm2021weakly}). We note that \cite[Example~9]{MMP} showed that there are at least 23 nonisomorphic simple connected graphs on $8$ vertices that have $\{-1,0,1\}$-bandwidth equal to $1$, 6 of which are Hadamard diagonalizable (i.e., have $\{-1,1\}$-bandwidth equal to $1$), and the other 17 of which are not (since they are not regular). This agrees with the results displayed in Table~\ref{tab:small_graphs}, which also includes 3 additional graphs on eight vertices that have $\{-1,0,1\}$-bandwidth $2$.

We also note, however, that the small bandwidths displayed in Table~\ref{tab:small_graphs} could be an artifact of that fact that these graphs all have very few vertices. In particular, since eigenvectors coming from distinct eigenspaces of a graph's Laplacian matrix are necessarily orthogonal to each other, the largest finite bandwidth that a graph can have is the largest multiplicity of an eigenvalue of its Laplacian (which is typically a small integer when $n \leq 9$). A larger number of vertices would allow for larger eigenspaces, in which case there will likely be far more graphs with $\{-1,0,1\}$-bandwidth larger than $2$. 

\renewcommand\cellalign{cc}
\setlength{\LTcapwidth}{\textwidth}
\begin{longtable}{ c c c c c c c } 
    \toprule
    $n$ & Graph & Name &  $\{-1,0,1\}$-bandwidth & $\{-1,1\}$-bandwidth \\
    \midrule
    
    $1$ & \begin{tikzpicture}[scale=0.75,every node/.style={circle,fill=black,inner sep=2.5pt}]
        \node (1) at (0,0) {};
    \end{tikzpicture} & $K_1$  & $1$   & $1$ \\\midrule
    
    $2$ & \begin{tikzpicture}[scale=0.75,every node/.style={circle,fill=black,inner sep=2.5pt}]
        \node (1) at (0,0) {};
        \node (2) at (1,0) {};
        \draw (1) -- (2);
    \end{tikzpicture} & $K_2$ &  $1$ &   $1$ \\\midrule

    \makecell{$3$} & \makecell{\begin{tikzpicture}[scale=0.75,every node/.style={circle,fill=black,inner sep=2.5pt}]
        \node (1) at (0,0.866) {};
        \node (2) at (0.5,0) {};
        \node (3) at (-0.5,0) {};
        \draw (1) -- (2) -- (3) -- (1);
    \end{tikzpicture}} & \makecell{$K_3$}  & \makecell{$2$} &  \makecell{$\infty$} \\\midrule

    \makecell{$4$} & \makecell{\begin{tikzpicture}[scale=0.75,every node/.style={circle,fill=black,inner sep=2.5pt}]
        \node (1) at (-0.5,-0.5) {};
        \node (2) at (-0.5,0.5) {};
        \node (3) at (0.5,0.5) {};
        \node (4) at (0.5,-0.5) {};
        \draw (1) -- (3) -- (4) -- (2) -- (1);
    \end{tikzpicture}} & \makecell{$K_{2,2}$} &   \makecell{$1$} &   \makecell{$1$} \\
    
    & \makecell{\begin{tikzpicture}[scale=0.75,every node/.style={circle,fill=black,inner sep=2.5pt}]
        \node (1) at (-0.5,-0.5) {};
        \node (2) at (-0.5,0.5) {};
        \node (3) at (0.5,0.5) {};
        \node (4) at (0.5,-0.5) {};
        \draw (1) -- (3) -- (4) -- (2) -- (1);
        \draw (2) -- (3);
    \end{tikzpicture}} & \makecell{$K_{2,1,1}$}  & \makecell{$1$}   & \makecell{$\infty$} \\
    
    & \makecell{\begin{tikzpicture}[scale=0.75,every node/.style={circle,fill=black,inner sep=2.5pt}]
        \node (1) at (-0.5,-0.5) {};
        \node (2) at (-0.5,0.5) {};
        \node (3) at (0.5,0.5) {};
        \node (4) at (0.5,-0.5) {};
        \draw (1) -- (2) -- (3) -- (4) -- (1);
        \draw (1) -- (3);
        \draw (2) -- (4);
    \end{tikzpicture}} & \makecell{$K_4$}   & \makecell{$1$}  & \makecell{$1$} \\\midrule

    \makecell{$5$} & \makecell{\begin{tikzpicture}[scale=0.5,every node/.style={circle,fill=black,inner sep=2.5pt}]
        \node (1) at (0,1) {};
        \node (2) at (0.951,0.309) {};
        \node (3) at (-0.951,0.309) {};
        \node (4) at (0.588,-0.809) {};
        \node (5) at (-0.588,-0.809) {};
        \draw (1) -- (2) -- (3) -- (4);
        \draw (5) -- (1) -- (3) -- (5) -- (2) -- (4) -- (1);
    \end{tikzpicture}} & \makecell{$K_{2,1,1,1}$} & \makecell{$2$} &  \makecell{$\infty$} \\
    
    & \makecell{\begin{tikzpicture}[scale=0.5,every node/.style={circle,fill=black,inner sep=2.5pt}]
        \node (1) at (0,1) {};
        \node (2) at (0.951,0.309) {};
        \node (3) at (-0.951,0.309) {};
        \node (4) at (0.588,-0.809) {};
        \node (5) at (-0.588,-0.809) {};
        \draw (1) -- (2) -- (3) -- (4) -- (5) -- (1);
        \draw (1) -- (3) -- (5) -- (2) -- (4) -- (1);
    \end{tikzpicture}} & \makecell{$K_5$}   & \makecell{$2$} &  \makecell{$\infty$} \\\midrule

    \makecell{$6$} & \makecell{\begin{tikzpicture}[scale=0.5,every node/.style={circle,fill=black,inner sep=2.5pt}]
        \node (1) at (0.5,0.866) {};
        \node (2) at (1,0) {};
        \node (3) at (0.5,-0.866) {};
        \node (4) at (-0.5,-0.866) {};
        \node (5) at (-1,0) {};
        \node (6) at (-0.5,0.866) {};
        \draw (1) -- (2) -- (3) -- (4) -- (5) -- (6) -- (1);
    \end{tikzpicture}} & \makecell{$(K_3 \mathbin{\square} K_2)^{\textup{c}}$} &   \makecell{$2$} &  \makecell{$\infty$} \\
    
    & \makecell{\begin{tikzpicture}[scale=0.5,every node/.style={circle,fill=black,inner sep=2.5pt}]
        \node (1) at (0.5,0.866) {};
        \node (2) at (1,0) {};
        \node (3) at (0.5,-0.866) {};
        \node (4) at (-0.5,-0.866) {};
        \node (5) at (-1,0) {};
        \node (6) at (-0.5,0.866) {};
        \draw (1) -- (2) -- (3) -- (4) -- (5) -- (6) -- (1);
        \draw (1) -- (4);
        \draw (2) -- (5);
        \draw (3) -- (6);
    \end{tikzpicture}} & \makecell{$K_{3,3}$} &  \makecell{$2$}   & \makecell{$\infty$} \\
    
    & \makecell{\begin{tikzpicture}[scale=0.5,every node/.style={circle,fill=black,inner sep=2.5pt}]
        \node (1) at (0.5,0.866) {};
        \node (2) at (1,0) {};
        \node (3) at (0.5,-0.866) {};
        \node (4) at (-0.5,-0.866) {};
        \node (5) at (-1,0) {};
        \node (6) at (-0.5,0.866) {};
        \draw (1) -- (2) -- (3) -- (4) -- (5) -- (6) -- (1);
        \draw (1) -- (4);
        \draw (2) -- (5);
        \draw (3) -- (6);
        \draw (2) -- (4) -- (6) -- (2);
    \end{tikzpicture}} & \makecell{$K_{3,1,1,1}$}   & \makecell{$2$}  & \makecell{$\infty$} \\
    
    & \makecell{\begin{tikzpicture}[scale=0.5,every node/.style={circle,fill=black,inner sep=2.5pt}]
        \node (1) at (0.5,0.866) {};
        \node (2) at (1,0) {};
        \node (3) at (0.5,-0.866) {};
        \node (4) at (-0.5,-0.866) {};
        \node (5) at (-1,0) {};
        \node (6) at (-0.5,0.866) {};
        \draw (1) -- (4);
        \draw (2) -- (5);
        \draw (3) -- (6);
        \draw (2) -- (4) -- (6) -- (2);
        \draw (1) -- (3) -- (5) -- (1);
    \end{tikzpicture}} & \makecell{$K_3 \mathbin{\square} K_2$} &   \makecell{$2$} & \makecell{$\infty$} \\
    
    & \makecell{\begin{tikzpicture}[scale=0.5,every node/.style={circle,fill=black,inner sep=2.5pt}]
        \node (1) at (0.5,0.866) {};
        \node (2) at (1,0) {};
        \node (3) at (0.5,-0.866) {};
        \node (4) at (-0.5,-0.866) {};
        \node (5) at (-1,0) {};
        \node (6) at (-0.5,0.866) {};
        \draw (1) -- (2) -- (3);
        \draw (4) -- (5) -- (6);
        \draw (1) -- (4);
        \draw (3) -- (6);
        \draw (2) -- (4) -- (6) -- (2);
        \draw (1) -- (3) -- (5) -- (1);
    \end{tikzpicture}} & \makecell{$K_{2,2,2}$} &   \makecell{$2$} &   \makecell{$\infty$} \\
    
    & \makecell{\begin{tikzpicture}[scale=0.5,every node/.style={circle,fill=black,inner sep=2.5pt}]
        \node (1) at (0.5,0.866) {};
        \node (2) at (1,0) {};
        \node (3) at (0.5,-0.866) {};
        \node (4) at (-0.5,-0.866) {};
        \node (5) at (-1,0) {};
        \node (6) at (-0.5,0.866) {};
        \draw (1) -- (2) -- (3);
        \draw (4) -- (5) -- (6);
        \draw (1) -- (4);
        \draw (2) -- (5);
        \draw (3) -- (6);
        \draw (2) -- (4) -- (6) -- (2);
        \draw (1) -- (3) -- (5) -- (1);
    \end{tikzpicture}} & \makecell{$K_{2,2,1,1}$} & \makecell{$2$}  & \makecell{$\infty$} \\
    
    & \makecell{\begin{tikzpicture}[scale=0.5,every node/.style={circle,fill=black,inner sep=2.5pt}]
        \node (1) at (0.5,0.866) {};
        \node (2) at (1,0) {};
        \node (3) at (0.5,-0.866) {};
        \node (4) at (-0.5,-0.866) {};
        \node (5) at (-1,0) {};
        \node (6) at (-0.5,0.866) {};
        \draw (1) -- (2) -- (3);
        \draw (4) -- (5) -- (6) -- (1);
        \draw (1) -- (4);
        \draw (2) -- (5);
        \draw (3) -- (6);
        \draw (2) -- (4) -- (6) -- (2);
        \draw (1) -- (3) -- (5) -- (1);
    \end{tikzpicture}} & \makecell{$K_{2,1,1,1,1}$} &   \makecell{$2$} &  \makecell{$\infty$} \\
    
    & \makecell{\begin{tikzpicture}[scale=0.5,every node/.style={circle,fill=black,inner sep=2.5pt}]
        \node (1) at (0.5,0.866) {};
        \node (2) at (1,0) {};
        \node (3) at (0.5,-0.866) {};
        \node (4) at (-0.5,-0.866) {};
        \node (5) at (-1,0) {};
        \node (6) at (-0.5,0.866) {};
        \draw (1) -- (2) -- (3) -- (4) -- (5) -- (6) -- (1);
        \draw (1) -- (4);
        \draw (2) -- (5);
        \draw (3) -- (6);
        \draw (2) -- (4) -- (6) -- (2);
        \draw (1) -- (3) -- (5) -- (1);
    \end{tikzpicture}} & \makecell{$K_6$}  & \makecell{$2$} & \makecell{$5$} \\\midrule

    \makecell{$7$} & \makecell{\begin{tikzpicture}[scale=0.5,every node/.style={circle,fill=black,inner sep=2.2pt}]
        \node (1) at (0,1) {};
        \node (2) at (0.782,0.623) {};
        \node (3) at (0.975,-0.223) {};
        \node (4) at (0.434,-0.901) {};
        \node (5) at (-0.434,-0.901) {};
        \node (6) at (-0.975,-0.223) {};
        \node (7) at (-0.782,0.623) {};
        \draw (2) -- (3) -- (4);
        \draw (5) -- (6) -- (7);
        \draw (2) -- (4) -- (6) -- (1) -- (3) -- (5) -- (7) -- (3) -- (6) -- (2) -- (5) -- (1) -- (4) -- (7);
    \end{tikzpicture}} & \makecell{$K_{3,2,1,1}$}  & \makecell{$2$} & \makecell{$\infty$} \\
    
    & \makecell{\begin{tikzpicture}[scale=0.5,every node/.style={circle,fill=black,inner sep=2.2pt}]
        \node (1) at (0,1) {};
        \node (2) at (0.782,0.623) {};
        \node (3) at (0.975,-0.223) {};
        \node (4) at (0.434,-0.901) {};
        \node (5) at (-0.434,-0.901) {};
        \node (6) at (-0.975,-0.223) {};
        \node (7) at (-0.782,0.623) {};
        \draw (2) -- (3) -- (4) -- (5) -- (6) -- (7);
        \draw (2) -- (4) -- (6) -- (1) -- (3) -- (5) -- (7) -- (3) -- (6) -- (2) -- (5) -- (1) -- (4) -- (7);
    \end{tikzpicture}} & \makecell{$K_{3,1,1,1,1}$} &  \makecell{$2$} &  \makecell{$\infty$} \\
    
    & \makecell{\begin{tikzpicture}[scale=0.5,every node/.style={circle,fill=black,inner sep=2.2pt}]
        \node (1) at (0,1) {};
        \node (2) at (0.782,0.623) {};
        \node (3) at (0.975,-0.223) {};
        \node (4) at (0.434,-0.901) {};
        \node (5) at (-0.434,-0.901) {};
        \node (6) at (-0.975,-0.223) {};
        \node (7) at (-0.782,0.623) {};
        \draw (1) -- (2);
        \draw (3) -- (4) -- (5) -- (6);
        \draw (7) -- (1) -- (3) -- (5) -- (7) -- (2) -- (4) -- (6) -- (1) -- (4) -- (7) -- (3) -- (6) -- (2) -- (5) -- (1);
    \end{tikzpicture}} & \makecell{$K_{2,2,1,1,1}$}   & \makecell{$2$} &  \makecell{$\infty$} \\
    
    & \makecell{\begin{tikzpicture}[scale=0.5,every node/.style={circle,fill=black,inner sep=2.2pt}]
        \node (1) at (0,1) {};
        \node (2) at (0.782,0.623) {};
        \node (3) at (0.975,-0.223) {};
        \node (4) at (0.434,-0.901) {};
        \node (5) at (-0.434,-0.901) {};
        \node (6) at (-0.975,-0.223) {};
        \node (7) at (-0.782,0.623) {};
        \draw (1) -- (2) -- (3) -- (4);
        \draw (5) -- (6) -- (7) -- (1) -- (3) -- (5) -- (7) -- (2) -- (4) -- (6) -- (1) -- (4) -- (7) -- (3) -- (6) -- (2) -- (5) -- (1);
    \end{tikzpicture}} & \makecell{$K_{2,1,1,1,1,1}$} &  \makecell{$2$} &  \makecell{$\infty$} \\
    
    & \makecell{\begin{tikzpicture}[scale=0.5,every node/.style={circle,fill=black,inner sep=2.2pt}]
        \node (1) at (0,1) {};
        \node (2) at (0.782,0.623) {};
        \node (3) at (0.975,-0.223) {};
        \node (4) at (0.434,-0.901) {};
        \node (5) at (-0.434,-0.901) {};
        \node (6) at (-0.975,-0.223) {};
        \node (7) at (-0.782,0.623) {};
        \draw (1) -- (2) -- (3) -- (4) -- (5) -- (6) -- (7) -- (1) -- (3) -- (5) -- (7) -- (2) -- (4) -- (6) -- (1) -- (4) -- (7) -- (3) -- (6) -- (2) -- (5) -- (1);
    \end{tikzpicture}} & \makecell{$K_7$}  & \makecell{$2$}  & \makecell{$\infty$} \\\midrule

    \makecell{$8$} & \makecell{\begin{tikzpicture}[scale=0.55,every node/.style={circle,fill=black,inner sep=2pt}]
        \node (1) at (0.383,0.924) {};
        \node (2) at (0.924,0.383) {};
        \node (3) at (0.924,-0.383) {};
        \node (4) at (0.383,-0.924) {};
        \node (5) at (-0.383,-0.924) {};
        \node (6) at (-0.924,-0.383) {};
        \node (7) at (-0.924,0.383) {};
        \node (8) at (-0.383,0.924) {};
        \draw (1) -- (5);\draw (1) -- (6);\draw (1) -- (7);
        \draw (2) -- (5);\draw (2) -- (6);\draw (2) -- (8);
        \draw (3) -- (5);\draw (3) -- (7);\draw (3) -- (8);
        \draw (4) -- (6);\draw (4) -- (7);\draw (4) -- (8);
    \end{tikzpicture}} & \makecell{$(K_4 \mathbin{\square} K_2)^{\textup{c}}$} &   \makecell{$1$}  & \makecell{$1$} \\
    
    & \makecell{\begin{tikzpicture}[scale=0.55,every node/.style={circle,fill=black,inner sep=2pt}]
        \node (1) at (0.383,0.924) {};
        \node (2) at (0.924,0.383) {};
        \node (3) at (0.924,-0.383) {};
        \node (4) at (0.383,-0.924) {};
        \node (5) at (-0.383,-0.924) {};
        \node (6) at (-0.924,-0.383) {};
        \node (7) at (-0.924,0.383) {};
        \node (8) at (-0.383,0.924) {};
        \draw (1) -- (5) -- (2) -- (6) -- (1) -- (7) -- (2) -- (8) -- (1);
        \draw (3) -- (5) -- (4) -- (6) -- (3) -- (7) -- (4) -- (8) -- (3);
    \end{tikzpicture}} & \makecell{$K_{4,4}$}   & \makecell{$1$}  & \makecell{$1$} \\
    
    & \makecell{\begin{tikzpicture}[scale=0.55,every node/.style={circle,fill=black,inner sep=2pt}]
        \node (1) at (0.383,0.924) {};
        \node (2) at (0.924,0.383) {};
        \node (3) at (0.924,-0.383) {};
        \node (4) at (0.383,-0.924) {};
        \node (5) at (-0.383,-0.924) {};
        \node (6) at (-0.924,-0.383) {};
        \node (7) at (-0.924,0.383) {};
        \node (8) at (-0.383,0.924) {};
        \draw (1) -- (5) -- (2) -- (6) -- (1) -- (7) -- (2) -- (8) -- (1);
        \draw (3) -- (5) -- (4) -- (6) -- (3) -- (7) -- (4) -- (8) -- (3);
        \draw (6) -- (7);
    \end{tikzpicture}} & \makecell{$(K_4 \sqcup K_{2,1,1})^{\textup{c}}$}  & \makecell{$1$}  & \makecell{$\infty$} \\
    
    & \makecell{\begin{tikzpicture}[scale=0.55,every node/.style={circle,fill=black,inner sep=2pt}]
        \node (1) at (0.383,0.924) {};
        \node (2) at (0.924,0.383) {};
        \node (3) at (0.924,-0.383) {};
        \node (4) at (0.383,-0.924) {};
        \node (5) at (-0.383,-0.924) {};
        \node (6) at (-0.924,-0.383) {};
        \node (7) at (-0.924,0.383) {};
        \node (8) at (-0.383,0.924) {};
        \draw (1) -- (5) -- (2) -- (6) -- (1) -- (7) -- (2) -- (8) -- (1);
        \draw (3) -- (5) -- (4) -- (6) -- (3) -- (7) -- (4) -- (8) -- (3);
        \draw (5) -- (6);\draw (7) -- (8);
    \end{tikzpicture}} & \makecell{$(K_4 \sqcup K_{2,2})^{\textup{c}}$} & \makecell{$1$}  & \makecell{$\infty$} \\
    
    & \makecell{\begin{tikzpicture}[scale=0.55,every node/.style={circle,fill=black,inner sep=2pt}]
        \node (1) at (0.383,0.924) {};
        \node (2) at (0.924,0.383) {};
        \node (3) at (0.924,-0.383) {};
        \node (4) at (0.383,-0.924) {};
        \node (5) at (-0.383,-0.924) {};
        \node (6) at (-0.924,-0.383) {};
        \node (7) at (-0.924,0.383) {};
        \node (8) at (-0.383,0.924) {};
        \draw (1) -- (5) -- (2) -- (6) -- (1) -- (7) -- (2) -- (8) -- (1);
        \draw (3) -- (5) -- (4) -- (6) -- (3) -- (7) -- (4) -- (8) -- (3);
        \draw (5) -- (7) -- (6) -- (8) -- (5);
    \end{tikzpicture}} & \makecell{$K_{4,2,2}$} &  \makecell{$1$} &   \makecell{$\infty$} \\
    
    & \makecell{\begin{tikzpicture}[scale=0.55,every node/.style={circle,fill=black,inner sep=2pt}]
        \node (1) at (0.383,0.924) {};
        \node (2) at (0.924,0.383) {};
        \node (3) at (0.924,-0.383) {};
        \node (4) at (0.383,-0.924) {};
        \node (5) at (-0.383,-0.924) {};
        \node (6) at (-0.924,-0.383) {};
        \node (7) at (-0.924,0.383) {};
        \node (8) at (-0.383,0.924) {};
        \draw (1) -- (5) -- (2) -- (6) -- (1) -- (7) -- (2) -- (8) -- (1);
        \draw (3) -- (5) -- (4) -- (6) -- (3) -- (7) -- (4) -- (8) -- (3);
        \draw (5) -- (7) -- (6) -- (8) -- (5);\draw (7) -- (8);
    \end{tikzpicture}} & \makecell{$K_{4,2,1,1}$} &   \makecell{$1$} &  \makecell{$\infty$} \\
    
    & \makecell{\begin{tikzpicture}[scale=0.55,every node/.style={circle,fill=black,inner sep=2pt}]
        \node (1) at (0.383,0.924) {};
        \node (2) at (0.924,0.383) {};
        \node (3) at (0.924,-0.383) {};
        \node (4) at (0.383,-0.924) {};
        \node (5) at (-0.383,-0.924) {};
        \node (6) at (-0.924,-0.383) {};
        \node (7) at (-0.924,0.383) {};
        \node (8) at (-0.383,0.924) {};
        \draw (1) -- (5) -- (2) -- (6) -- (1) -- (7) -- (2) -- (8) -- (1);
        \draw (3) -- (5) -- (4) -- (6) -- (3) -- (7) -- (4) -- (8) -- (3);
        \draw (5) -- (7) -- (6) -- (8) -- (5);\draw (7) -- (8);\draw (5) -- (6);
    \end{tikzpicture}} & \makecell{$K_{4,1,1,1,1}$} &   \makecell{$1$} &  \makecell{$\infty$} \\
    
    & \makecell{\begin{tikzpicture}[scale=0.55,every node/.style={circle,fill=black,inner sep=2pt}]
        \node (1) at (0.383,0.924) {};
        \node (2) at (0.924,0.383) {};
        \node (3) at (0.924,-0.383) {};
        \node (4) at (0.383,-0.924) {};
        \node (5) at (-0.383,-0.924) {};
        \node (6) at (-0.924,-0.383) {};
        \node (7) at (-0.924,0.383) {};
        \node (8) at (-0.383,0.924) {};
        \draw (1) -- (5) -- (2) -- (6) -- (1) -- (7) -- (2) -- (8) -- (1);
        \draw (3) -- (5) -- (4) -- (6) -- (3) -- (7) -- (4) -- (8) -- (3);
        \draw (6) -- (7);\draw (2) -- (3);
    \end{tikzpicture}} & \makecell{$(K_{2,1,1} \sqcup K_{2,1,1})^{\textup{c}}$} & \makecell{$1$}& \makecell{$\infty$} \\
    
    & \makecell{\begin{tikzpicture}[scale=0.55,every node/.style={circle,fill=black,inner sep=2pt}]
        \node (1) at (0.383,0.924) {};
        \node (2) at (0.924,0.383) {};
        \node (3) at (0.924,-0.383) {};
        \node (4) at (0.383,-0.924) {};
        \node (5) at (-0.383,-0.924) {};
        \node (6) at (-0.924,-0.383) {};
        \node (7) at (-0.924,0.383) {};
        \node (8) at (-0.383,0.924) {};
        \draw (1) -- (5) -- (2) -- (6) -- (1) -- (7) -- (2) -- (8) -- (1);
        \draw (3) -- (5) -- (4) -- (6) -- (3) -- (7) -- (4) -- (8) -- (3);
        \draw (5) -- (7) -- (6) -- (8) -- (5);\draw (2) -- (3);
    \end{tikzpicture}} & \makecell{$(K_{2,1,1} \sqcup K_{2} \sqcup K_{2})^{\textup{c}}$} & \makecell{$1$} & \makecell{$\infty$} \\
    
    & \makecell{\begin{tikzpicture}[scale=0.55,every node/.style={circle,fill=black,inner sep=2pt}]
        \node (1) at (0.383,0.924) {};
        \node (2) at (0.924,0.383) {};
        \node (3) at (0.924,-0.383) {};
        \node (4) at (0.383,-0.924) {};
        \node (5) at (-0.383,-0.924) {};
        \node (6) at (-0.924,-0.383) {};
        \node (7) at (-0.924,0.383) {};
        \node (8) at (-0.383,0.924) {};
        \draw (1) -- (5) -- (2) -- (6) -- (1) -- (7) -- (2) -- (8) -- (1);
        \draw (3) -- (5) -- (4) -- (6) -- (3) -- (7) -- (4) -- (8) -- (3);
        \draw (5) -- (7) -- (6) -- (8) -- (5);\draw (7) -- (8);\draw (2) -- (3);
    \end{tikzpicture}} & \makecell{$(K_{2,1,1} \sqcup K_{2} \sqcup K_1 \sqcup K_1)^{\textup{c}}$,*} &   \makecell{$1$} &   \makecell{$\infty$} \\
    
    & \makecell{\begin{tikzpicture}[scale=0.55,every node/.style={circle,fill=black,inner sep=2pt}]
        \node (1) at (0.383,0.924) {};
        \node (2) at (0.924,0.383) {};
        \node (3) at (0.924,-0.383) {};
        \node (4) at (0.383,-0.924) {};
        \node (5) at (-0.383,-0.924) {};
        \node (6) at (-0.924,-0.383) {};
        \node (7) at (-0.924,0.383) {};
        \node (8) at (-0.383,0.924) {};
        \draw (1) -- (5) -- (2) -- (6) -- (1) -- (7) -- (2) -- (8) -- (1);
        \draw (3) -- (5) -- (4) -- (6) -- (3) -- (7) -- (4) -- (8) -- (3);
        \draw (5) -- (7) -- (6) -- (8) -- (5);\draw (7) -- (8);\draw (5) -- (6);\draw (2) -- (3);
    \end{tikzpicture}} & \makecell{$(K_{2,1,1} \sqcup K_1 \sqcup K_1 \sqcup K_1 \sqcup K_1)^{\textup{c}}$} & \makecell{$1$}   & \makecell{$\infty$} \\
    
    & \makecell{\begin{tikzpicture}[scale=0.55,every node/.style={circle,fill=black,inner sep=2pt}]
        \node (1) at (0.383,0.924) {};
        \node (2) at (0.924,0.383) {};
        \node (3) at (0.924,-0.383) {};
        \node (4) at (0.383,-0.924) {};
        \node (5) at (-0.383,-0.924) {};
        \node (6) at (-0.924,-0.383) {};
        \node (7) at (-0.924,0.383) {};
        \node (8) at (-0.383,0.924) {};
        \draw (1) -- (4) -- (7) -- (1);
        \draw (1) -- (5) -- (3) -- (8) -- (5);
        \draw (7) -- (2) -- (4) -- (8) -- (6);
        \draw (2) -- (6) -- (3) -- (7);
    \end{tikzpicture}} & \makecell{$K_{2,1,1} \mathbin{\square} K_2$,*}  & \makecell{$1$} & \makecell{$\infty$} \\
    
    & \makecell{\begin{tikzpicture}[scale=0.55,every node/.style={circle,fill=black,inner sep=2pt}]
        \node (1) at (0.383,0.924) {};
        \node (2) at (0.924,0.383) {};
        \node (3) at (0.924,-0.383) {};
        \node (4) at (0.383,-0.924) {};
        \node (5) at (-0.383,-0.924) {};
        \node (6) at (-0.924,-0.383) {};
        \node (7) at (-0.924,0.383) {};
        \node (8) at (-0.383,0.924) {};
        \draw (2) -- (5) -- (1) -- (4) -- (2);
        \draw (2) -- (6) -- (1) -- (7) -- (2);
        \draw (1) -- (8) -- (4);
        \draw (5) -- (3) -- (4);
        \draw (4) -- (6) -- (3) -- (7) -- (5);
        \draw (5) -- (8) -- (6);
        \draw (7) -- (8);
    \end{tikzpicture}} & \makecell{$(K_{2,2} \sqcup K_{2,1,1})^{\textup{c}}$} & \makecell{$1$}  & \makecell{$\infty$} \\
    
    & \makecell{\begin{tikzpicture}[scale=0.55,every node/.style={circle,fill=black,inner sep=2pt}]
        \node (1) at (0.383,0.924) {};
        \node (2) at (0.924,0.383) {};
        \node (3) at (0.924,-0.383) {};
        \node (4) at (0.383,-0.924) {};
        \node (5) at (-0.383,-0.924) {};
        \node (6) at (-0.924,-0.383) {};
        \node (7) at (-0.924,0.383) {};
        \node (8) at (-0.383,0.924) {};
        \draw (3) -- (4);
        \draw (5) -- (6);
        \draw (7) -- (8) -- (1);
        \draw (1) -- (4) -- (7) -- (2) -- (5) -- (8) -- (3) -- (6) -- (1);
        \draw (3) -- (5) -- (7) -- (1) -- (5);
        \draw (2) -- (4) -- (6) -- (8) -- (2) -- (6);
        \draw (3) -- (7);
        \draw (4) -- (8);
    \end{tikzpicture}} & \makecell{$K_{3,2,2,1}$} & \makecell{$2$} &  \makecell{$\infty$} \\
    
    & \makecell{\begin{tikzpicture}[scale=0.55,every node/.style={circle,fill=black,inner sep=2pt}]
        \node (1) at (0.383,0.924) {};
        \node (2) at (0.924,0.383) {};
        \node (3) at (0.924,-0.383) {};
        \node (4) at (0.383,-0.924) {};
        \node (5) at (-0.383,-0.924) {};
        \node (6) at (-0.924,-0.383) {};
        \node (7) at (-0.924,0.383) {};
        \node (8) at (-0.383,0.924) {};
        \draw (3) -- (4);
        \draw (5) -- (6);
        \draw (7) -- (8) -- (1);
        \draw (1) -- (4) -- (7) -- (2) -- (5) -- (8) -- (3) -- (6) -- (1);
        \draw (3) -- (5) -- (7) -- (1) -- (5);
        \draw (2) -- (4) -- (6) -- (8) -- (2) -- (6);
        \draw (3) -- (7) -- (6);
        \draw (4) -- (8);
    \end{tikzpicture}} & \makecell{$K_{3,2,1,1,1}$} &  \makecell{$2$} &   \makecell{$\infty$} \\
    
    & \makecell{\begin{tikzpicture}[scale=0.55,every node/.style={circle,fill=black,inner sep=2pt}]
        \node (1) at (0.383,0.924) {};
        \node (2) at (0.924,0.383) {};
        \node (3) at (0.924,-0.383) {};
        \node (4) at (0.383,-0.924) {};
        \node (5) at (-0.383,-0.924) {};
        \node (6) at (-0.924,-0.383) {};
        \node (7) at (-0.924,0.383) {};
        \node (8) at (-0.383,0.924) {};
        \draw (3) -- (4) -- (5) -- (6);
        \draw (7) -- (8) -- (1);
        \draw (1) -- (4) -- (7) -- (2) -- (5) -- (8) -- (3) -- (6) -- (1);
        \draw (3) -- (5) -- (7) -- (1) -- (5);
        \draw (2) -- (4) -- (6) -- (8) -- (2) -- (6);
        \draw (3) -- (7) -- (6);
        \draw (4) -- (8);
    \end{tikzpicture}} & \makecell{$K_{3,1,1,1,1,1}$}  & \makecell{$2$}   & \makecell{$\infty$} \\
    
    & \makecell{\begin{tikzpicture}[scale=0.55,every node/.style={circle,fill=black,inner sep=2pt}]
        \node (1) at (0.383,0.924) {};
        \node (2) at (0.924,0.383) {};
        \node (3) at (0.924,-0.383) {};
        \node (4) at (0.383,-0.924) {};
        \node (5) at (-0.383,-0.924) {};
        \node (6) at (-0.924,-0.383) {};
        \node (7) at (-0.924,0.383) {};
        \node (8) at (-0.383,0.924) {};
        \draw (3) -- (8) -- (4) -- (7) -- (3);
        \draw (6) -- (1) -- (3) -- (5) -- (7) -- (1) -- (5) -- (2) -- (4) -- (6) -- (8) -- (2) -- (6);
    \end{tikzpicture}} & \makecell{$K_4 \mathbin{\square} K_2$} &  \makecell{$1$}  & \makecell{$1$} \\
    
    & \makecell{\begin{tikzpicture}[scale=0.55,every node/.style={circle,fill=black,inner sep=2pt}]
        \node (1) at (0.383,0.924) {};
        \node (2) at (0.924,0.383) {};
        \node (3) at (0.924,-0.383) {};
        \node (4) at (0.383,-0.924) {};
        \node (5) at (-0.383,-0.924) {};
        \node (6) at (-0.924,-0.383) {};
        \node (7) at (-0.924,0.383) {};
        \node (8) at (-0.383,0.924) {};
        \draw (4) -- (7) -- (2) -- (5) -- (4) -- (8) -- (1) -- (3) -- (5) -- (7) -- (1) -- (5);
        \draw (8) -- (3) -- (6) -- (1);
        \draw (2) -- (4) -- (6) -- (8) -- (2) -- (6);
        \draw (3) -- (7);
    \end{tikzpicture}} & \makecell{$(K_{2,2} \sqcup K_{2,2})^{\textup{c}}$} & \makecell{$1$} & \makecell{$1$} \\
    
    & \makecell{\begin{tikzpicture}[scale=0.55,every node/.style={circle,fill=black,inner sep=2pt}]
        \node (1) at (0.383,0.924) {};
        \node (2) at (0.924,0.383) {};
        \node (3) at (0.924,-0.383) {};
        \node (4) at (0.383,-0.924) {};
        \node (5) at (-0.383,-0.924) {};
        \node (6) at (-0.924,-0.383) {};
        \node (7) at (-0.924,0.383) {};
        \node (8) at (-0.383,0.924) {};
        \draw (4) -- (7) -- (2) -- (5) -- (4) -- (8) -- (1) -- (3) -- (5) -- (7) -- (1) -- (5) -- (8) -- (3) -- (6) -- (1);
        \draw (2) -- (4) -- (6) -- (8) -- (2) -- (6) -- (7) -- (3);
    \end{tikzpicture}} & \makecell{$(K_{2,2} \sqcup K_{2} \sqcup K_2)^{\textup{c}}$} & \makecell{$1$}   & \makecell{$\infty$} \\
    
    & \makecell{\begin{tikzpicture}[scale=0.55,every node/.style={circle,fill=black,inner sep=2pt}]
        \node (1) at (0.383,0.924) {};
        \node (2) at (0.924,0.383) {};
        \node (3) at (0.924,-0.383) {};
        \node (4) at (0.383,-0.924) {};
        \node (5) at (-0.383,-0.924) {};
        \node (6) at (-0.924,-0.383) {};
        \node (7) at (-0.924,0.383) {};
        \node (8) at (-0.383,0.924) {};
        \draw (4) -- (7) -- (2) -- (5) -- (4) -- (8) -- (1) -- (3) -- (5) -- (7) -- (1) -- (5) -- (8) -- (3) -- (6) -- (1);
        \draw (2) -- (4) -- (6) -- (8) -- (2) -- (6) -- (7) -- (3);
        \draw (7) -- (8);
    \end{tikzpicture}} & \makecell{$(K_{2,2} \sqcup K_{2} \sqcup K_1 \sqcup K_1)^{\textup{c}}$} & \makecell{$1$}  & \makecell{$\infty$} \\
    
    & \makecell{\begin{tikzpicture}[scale=0.55,every node/.style={circle,fill=black,inner sep=2pt}]
        \node (1) at (0.383,0.924) {};
        \node (2) at (0.924,0.383) {};
        \node (3) at (0.924,-0.383) {};
        \node (4) at (0.383,-0.924) {};
        \node (5) at (-0.383,-0.924) {};
        \node (6) at (-0.924,-0.383) {};
        \node (7) at (-0.924,0.383) {};
        \node (8) at (-0.383,0.924) {};
        \draw (4) -- (7) -- (2) -- (5) -- (4) -- (8) -- (1) -- (3) -- (5) -- (7) -- (1) -- (5) -- (8) -- (3) -- (6) -- (1);
        \draw (2) -- (4) -- (6) -- (8) -- (2) -- (6) -- (7) -- (3);
        \draw (7) -- (8);
        \draw (5) -- (6);
    \end{tikzpicture}} & \makecell{$(K_{2,2} \sqcup K_1 \sqcup K_1 \sqcup K_1 \sqcup K_1)^{\textup{c}}$} & \makecell{$1$}  & \makecell{$\infty$} \\
    
    & \makecell{\begin{tikzpicture}[scale=0.55,every node/.style={circle,fill=black,inner sep=2pt}]
        \node (1) at (0.383,0.924) {};
        \node (2) at (0.924,0.383) {};
        \node (3) at (0.924,-0.383) {};
        \node (4) at (0.383,-0.924) {};
        \node (5) at (-0.383,-0.924) {};
        \node (6) at (-0.924,-0.383) {};
        \node (7) at (-0.924,0.383) {};
        \node (8) at (-0.383,0.924) {};
        \draw (7) -- (3) -- (4);
        \draw (2) -- (1) -- (4) -- (7) -- (2) -- (5) -- (8) -- (3) -- (6) -- (1) -- (3) -- (5) -- (7) -- (1) -- (5);
        \draw (4) -- (8) -- (7);
        \draw (2) -- (4) -- (6) -- (8) -- (2) -- (6) -- (5);
    \end{tikzpicture}} & \makecell{$K_{2,2,2,2}$} &  \makecell{$1$} & \makecell{$1$} \\
    
    & \makecell{\begin{tikzpicture}[scale=0.55,every node/.style={circle,fill=black,inner sep=2pt}]
        \node (1) at (0.383,0.924) {};
        \node (2) at (0.924,0.383) {};
        \node (3) at (0.924,-0.383) {};
        \node (4) at (0.383,-0.924) {};
        \node (5) at (-0.383,-0.924) {};
        \node (6) at (-0.924,-0.383) {};
        \node (7) at (-0.924,0.383) {};
        \node (8) at (-0.383,0.924) {};
        \draw (2) -- (3) -- (4);
        \draw (5) -- (6) -- (7);
        \draw (8) -- (1) -- (4) -- (7) -- (2) -- (5) -- (8) -- (3) -- (6) -- (1);
        \draw (1) -- (3) -- (5) -- (7) -- (1) -- (5);
        \draw (2) -- (4) -- (6) -- (8) -- (2) -- (6);
        \draw (3) -- (7);
        \draw (4) -- (8);
    \end{tikzpicture}} & \makecell{$K_{2,2,2,1,1}$}   & \makecell{$1$}  & \makecell{$\infty$} \\
    
    & \makecell{\begin{tikzpicture}[scale=0.55,every node/.style={circle,fill=black,inner sep=2pt}]
        \node (1) at (0.383,0.924) {};
        \node (2) at (0.924,0.383) {};
        \node (3) at (0.924,-0.383) {};
        \node (4) at (0.383,-0.924) {};
        \node (5) at (-0.383,-0.924) {};
        \node (6) at (-0.924,-0.383) {};
        \node (7) at (-0.924,0.383) {};
        \node (8) at (-0.383,0.924) {};
        \draw (3) -- (4);
        \draw (1) -- (2) -- (3) -- (7) -- (6);
        \draw (1) -- (4) -- (7) -- (2) -- (5) -- (8) -- (3) -- (6) -- (1) -- (3) -- (5) -- (7) -- (1) -- (5);
        \draw (4) -- (8) -- (7);
        \draw (2) -- (4) -- (6) -- (8) -- (2) -- (6) -- (5);
    \end{tikzpicture}} & \makecell{$K_{2,2,1,1,1,1}$}   & \makecell{$1$}   & \makecell{$\infty$} \\
    
    & \makecell{\begin{tikzpicture}[scale=0.55,every node/.style={circle,fill=black,inner sep=2pt}]
        \node (1) at (0.383,0.924) {};
        \node (2) at (0.924,0.383) {};
        \node (3) at (0.924,-0.383) {};
        \node (4) at (0.383,-0.924) {};
        \node (5) at (-0.383,-0.924) {};
        \node (6) at (-0.924,-0.383) {};
        \node (7) at (-0.924,0.383) {};
        \node (8) at (-0.383,0.924) {};
        \draw (3) -- (4);
        \draw (1) -- (2) -- (3) -- (7) -- (6);
        \draw (8) -- (1) -- (4) -- (7) -- (2) -- (5) -- (8) -- (3) -- (6) -- (1) -- (3) -- (5) -- (7) -- (1) -- (5);
        \draw (4) -- (8) -- (7);
        \draw (2) -- (4) -- (6) -- (8) -- (2) -- (6) -- (5);
    \end{tikzpicture}} & \makecell{$K_{2,1,1,1,1,1,1}$}   & \makecell{$1$} &  \makecell{$\infty$} \\
    
    & \makecell{\begin{tikzpicture}[scale=0.55,every node/.style={circle,fill=black,inner sep=2pt}]
        \node (1) at (0.383,0.924) {};
        \node (2) at (0.924,0.383) {};
        \node (3) at (0.924,-0.383) {};
        \node (4) at (0.383,-0.924) {};
        \node (5) at (-0.383,-0.924) {};
        \node (6) at (-0.924,-0.383) {};
        \node (7) at (-0.924,0.383) {};
        \node (8) at (-0.383,0.924) {};
        \draw (1) -- (2) -- (3) -- (4) -- (5) -- (6) -- (7) -- (8) -- (1);
        \draw (1) -- (4) -- (7) -- (2) -- (5) -- (8) -- (3) -- (6) -- (1);
        \draw (1) -- (3) -- (5) -- (7) -- (1) -- (5);
        \draw (2) -- (4) -- (6) -- (8) -- (2) -- (6);
        \draw (3) -- (7);
        \draw (4) -- (8);
    \end{tikzpicture}} & \makecell{$K_8$}   & \makecell{$1$}   & \makecell{$1$} \\\midrule

    \makecell{$9$} & \makecell{\begin{tikzpicture}[scale=0.55,every node/.style={circle,fill=black,inner sep=2pt}]
        \node (1) at (0,1) {};
        \node (2) at (0.642,0.766) {};
        \node (3) at (0.985,0.174) {};
        \node (4) at (0.866,-0.5) {};
        \node (5) at (0.342,-0.940) {};
        \node (6) at (-0.342,-0.940) {};
        \node (7) at (-0.866,-0.5) {};
        \node (8) at (-0.985,0.174) {};
        \node (9) at (-0.642,0.766) {};
        \draw (6) -- (7) -- (8) -- (9) -- (1);
        \draw (5) -- (4) -- (6) -- (8) -- (1);
        \draw (4) -- (7) -- (1) -- (5) -- (9) -- (4) -- (8) -- (3) -- (7) -- (2) -- (6) -- (1);
        \draw (3) -- (6) -- (9) -- (3) -- (5) -- (7) -- (9) -- (2) -- (5) -- (8) -- (2);
    \end{tikzpicture}} & \makecell{$K_{4,2,1,1,1}$, *} & \makecell{$3$} &   \makecell{$\infty$} \\

    & \makecell{\begin{tikzpicture}[scale=0.55,every node/.style={circle,fill=black,inner sep=2pt}]
        \node (1) at (0,1) {};
        \node (2) at (0.642,0.766) {};
        \node (3) at (0.985,0.174) {};
        \node (4) at (0.866,-0.5) {};
        \node (5) at (0.342,-0.940) {};
        \node (6) at (-0.342,-0.940) {};
        \node (7) at (-0.866,-0.5) {};
        \node (8) at (-0.985,0.174) {};
        \node (9) at (-0.642,0.766) {};
        \draw (6) -- (7) -- (8) -- (9) -- (1);
        \draw (6) -- (5) -- (4) -- (6) -- (8) -- (1);
        \draw (4) -- (7) -- (1) -- (5) -- (9) -- (4) -- (8) -- (3) -- (7) -- (2) -- (6) -- (1);
        \draw (3) -- (6) -- (9) -- (3) -- (5) -- (7) -- (9) -- (2) -- (5) -- (8) -- (2);
    \end{tikzpicture}} & \makecell{$K_{4,1,1,1,1,1}$} & \makecell{$2$} &  \makecell{$\infty$} \\

    & \makecell{\begin{tikzpicture}[scale=0.55,every node/.style={circle,fill=black,inner sep=2pt}]
        \node (1) at (0,1) {};
        \node (2) at (0.642,0.766) {};
        \node (3) at (0.985,0.174) {};
        \node (4) at (0.866,-0.5) {};
        \node (5) at (0.342,-0.940) {};
        \node (6) at (-0.342,-0.940) {};
        \node (7) at (-0.866,-0.5) {};
        \node (8) at (-0.985,0.174) {};
        \node (9) at (-0.642,0.766) {};
        \draw (6) -- (7) -- (8) -- (9) -- (1) -- (4);
        \draw (5) -- (4) -- (6) -- (8) -- (1);
        \draw (4) -- (7) -- (1) -- (5) -- (9) -- (4) -- (8) -- (3) -- (7) -- (2) -- (6) -- (1);
        \draw (3) -- (6) -- (9) -- (3) -- (5) -- (7) -- (9) -- (2) -- (5) -- (8) -- (2);
    \end{tikzpicture}} & \makecell{$(K_{2,1,1} \sqcup K_2 \sqcup K_1 \sqcup K_1 \sqcup K_1)^{\textup{c}}$, *}   & \makecell{$3$}  & \makecell{$\infty$} \\
    
    & \makecell{\begin{tikzpicture}[scale=0.55,every node/.style={circle,fill=black,inner sep=2pt}]
        \node (1) at (0,1) {};
        \node (2) at (0.642,0.766) {};
        \node (3) at (0.985,0.174) {};
        \node (4) at (0.866,-0.5) {};
        \node (5) at (0.342,-0.940) {};
        \node (6) at (-0.342,-0.940) {};
        \node (7) at (-0.866,-0.5) {};
        \node (8) at (-0.985,0.174) {};
        \node (9) at (-0.642,0.766) {};
        \draw (6) -- (7) -- (8) -- (9) -- (1) -- (4);
        \draw (6) -- (5) -- (4) -- (6) -- (8) -- (1);
        \draw (4) -- (7) -- (1) -- (5) -- (9) -- (4) -- (8) -- (3) -- (7) -- (2) -- (6) -- (1);
        \draw (3) -- (6) -- (9) -- (3) -- (5) -- (7) -- (9) -- (2) -- (5) -- (8) -- (2);
    \end{tikzpicture}} & \makecell{$(K_{2,1,1} \sqcup K_1 \sqcup K_1 \sqcup K_1 \sqcup K_1 \sqcup K_1)^{\textup{c}}$} & \makecell{$2$} &  \makecell{$\infty$} \\
    
    & \makecell{\begin{tikzpicture}[scale=0.55,every node/.style={circle,fill=black,inner sep=2pt}]
        \node (1) at (0,1) {};
        \node (2) at (0.642,0.766) {};
        \node (3) at (0.985,0.174) {};
        \node (4) at (0.866,-0.5) {};
        \node (5) at (0.342,-0.940) {};
        \node (6) at (-0.342,-0.940) {};
        \node (7) at (-0.866,-0.5) {};
        \node (8) at (-0.985,0.174) {};
        \node (9) at (-0.642,0.766) {};
        \draw (3) -- (5) -- (7);
        \draw (6) -- (8) -- (1) -- (4) -- (7);
        \draw (3) -- (6) -- (9) -- (1) -- (5) -- (9) -- (2) -- (4) -- (8) -- (3) -- (7) -- (2) -- (6);
    \end{tikzpicture}} & \makecell{$K_3 \mathbin{\square} K_3$} & \makecell{$2$} &  \makecell{$\infty$} \\
    
    & \makecell{\begin{tikzpicture}[scale=0.55,every node/.style={circle,fill=black,inner sep=2pt}]
        \node (1) at (0,1) {};
        \node (2) at (0.642,0.766) {};
        \node (3) at (0.985,0.174) {};
        \node (4) at (0.866,-0.5) {};
        \node (5) at (0.342,-0.940) {};
        \node (6) at (-0.342,-0.940) {};
        \node (7) at (-0.866,-0.5) {};
        \node (8) at (-0.985,0.174) {};
        \node (9) at (-0.642,0.766) {};
        \draw (3) -- (6) -- (9) -- (3) -- (4);
        \draw (6) -- (7);
        \draw (9) -- (1);
        \draw (3) -- (5) -- (7);
        \draw (9) -- (2) -- (4);
        \draw (6) -- (8) -- (1) -- (4) -- (7) -- (1) -- (5) -- (9) -- (4) -- (8) -- (3) -- (7) -- (2) -- (6) -- (1);
        \draw (2) -- (5) -- (8) -- (2);
    \end{tikzpicture}} & \makecell{$K_{3,3,3}$} & \makecell{$2$} &   \makecell{$\infty$} \\
    
    & \makecell{\begin{tikzpicture}[scale=0.55,every node/.style={circle,fill=black,inner sep=2pt}]
        \node (1) at (0,1) {};
        \node (2) at (0.642,0.766) {};
        \node (3) at (0.985,0.174) {};
        \node (4) at (0.866,-0.5) {};
        \node (5) at (0.342,-0.940) {};
        \node (6) at (-0.342,-0.940) {};
        \node (7) at (-0.866,-0.5) {};
        \node (8) at (-0.985,0.174) {};
        \node (9) at (-0.642,0.766) {};
        \draw (3) -- (6) -- (9) -- (3) -- (4);
        \draw (6) -- (7) -- (8) -- (9) -- (7);
        \draw (9) -- (1);
        \draw (3) -- (5) -- (7);
        \draw (9) -- (2) -- (4);
        \draw (6) -- (8) -- (1) -- (4) -- (7) -- (1) -- (5) -- (9) -- (4) -- (8) -- (3) -- (7) -- (2) -- (6) -- (1);
        \draw (2) -- (5) -- (8) -- (2);
    \end{tikzpicture}} & \makecell{$K_{3,3,1,1,1}$} & \makecell{$2$} &  \makecell{$\infty$} \\
    
    & \makecell{\begin{tikzpicture}[scale=0.55,every node/.style={circle,fill=black,inner sep=2pt}]
        \node (1) at (0,1) {};
        \node (2) at (0.642,0.766) {};
        \node (3) at (0.985,0.174) {};
        \node (4) at (0.866,-0.5) {};
        \node (5) at (0.342,-0.940) {};
        \node (6) at (-0.342,-0.940) {};
        \node (7) at (-0.866,-0.5) {};
        \node (8) at (-0.985,0.174) {};
        \node (9) at (-0.642,0.766) {};
        \draw (3) -- (4);
        \draw (5) -- (6);
        \draw (7) -- (8) -- (9) -- (1);
        \draw (3) -- (5) -- (7) -- (9) -- (2) -- (4) -- (6) -- (8) -- (1) -- (4) -- (7) -- (1) -- (5) -- (9) -- (4) -- (8) -- (3) -- (7) -- (2) -- (6) -- (1);
        \draw (2) -- (5) -- (8) -- (2);
        \draw (3) -- (6) -- (9) -- (3);
    \end{tikzpicture}} & \makecell{$K_{3,2,2,1,1}$, *}  & \makecell{$2$} & \makecell{$\infty$} \\
    
    & \makecell{\begin{tikzpicture}[scale=0.55,every node/.style={circle,fill=black,inner sep=2pt}]
        \node (1) at (0,1) {};
        \node (2) at (0.642,0.766) {};
        \node (3) at (0.985,0.174) {};
        \node (4) at (0.866,-0.5) {};
        \node (5) at (0.342,-0.940) {};
        \node (6) at (-0.342,-0.940) {};
        \node (7) at (-0.866,-0.5) {};
        \node (8) at (-0.985,0.174) {};
        \node (9) at (-0.642,0.766) {};
        \draw (3) -- (4);
        \draw (5) -- (6) -- (7) -- (8) -- (9) -- (1);
        \draw (3) -- (5) -- (7) -- (9) -- (2) -- (4) -- (6) -- (8) -- (1) -- (4) -- (7) -- (1) -- (5) -- (9) -- (4) -- (8) -- (3) -- (7) -- (2) -- (6) -- (1);
        \draw (2) -- (5) -- (8) -- (2);
        \draw (3) -- (6) -- (9) -- (3);
    \end{tikzpicture}} & \makecell{$K_{3,2,1,1,1,1}$, *} & \makecell{$2$} &   \makecell{$\infty$} \\
    
    & \makecell{\begin{tikzpicture}[scale=0.55,every node/.style={circle,fill=black,inner sep=2pt}]
        \node (1) at (0,1) {};
        \node (2) at (0.642,0.766) {};
        \node (3) at (0.985,0.174) {};
        \node (4) at (0.866,-0.5) {};
        \node (5) at (0.342,-0.940) {};
        \node (6) at (-0.342,-0.940) {};
        \node (7) at (-0.866,-0.5) {};
        \node (8) at (-0.985,0.174) {};
        \node (9) at (-0.642,0.766) {};
        \draw (3) -- (4) -- (5) -- (6) -- (7) -- (8) -- (9) -- (1);
        \draw (3) -- (5) -- (7) -- (9) -- (2) -- (4) -- (6) -- (8) -- (1) -- (4) -- (7) -- (1) -- (5) -- (9) -- (4) -- (8) -- (3) -- (7) -- (2) -- (6) -- (1);
        \draw (2) -- (5) -- (8) -- (2);
        \draw (3) -- (6) -- (9) -- (3);
    \end{tikzpicture}} & \makecell{$K_{3,1,1,1,1,1,1}, *$} & \makecell{$2$} &  \makecell{$\infty$} \\
    
    & \makecell{\begin{tikzpicture}[scale=0.55,every node/.style={circle,fill=black,inner sep=2pt}]
        \node (1) at (0,1) {};
        \node (2) at (0.642,0.766) {};
        \node (3) at (0.985,0.174) {};
        \node (4) at (0.866,-0.5) {};
        \node (5) at (0.342,-0.940) {};
        \node (6) at (-0.342,-0.940) {};
        \node (7) at (-0.866,-0.5) {};
        \node (8) at (-0.985,0.174) {};
        \node (9) at (-0.642,0.766) {};
        \draw (4) -- (5);
        \draw (6) -- (7) -- (8) -- (9) -- (1) -- (3) -- (5) -- (7) -- (9) -- (2) -- (4) -- (6) -- (8) -- (1);
        \draw (4) -- (7) -- (1) -- (5) -- (9) -- (4) -- (8) -- (3) -- (7) -- (2) -- (6) -- (1);
        \draw (2) -- (5) -- (8) -- (2);
        \draw (3) -- (6) -- (9) -- (3);
    \end{tikzpicture}} & \makecell{$(K_{2,2} \sqcup K_2 \sqcup K_1 \sqcup K_1 \sqcup K_1)^{\textup{c}}$, *} & \makecell{$3$}   & \makecell{$\infty$} \\
    
    & \makecell{\begin{tikzpicture}[scale=0.55,every node/.style={circle,fill=black,inner sep=2pt}]
        \node (1) at (0,1) {};
        \node (2) at (0.642,0.766) {};
        \node (3) at (0.985,0.174) {};
        \node (4) at (0.866,-0.5) {};
        \node (5) at (0.342,-0.940) {};
        \node (6) at (-0.342,-0.940) {};
        \node (7) at (-0.866,-0.5) {};
        \node (8) at (-0.985,0.174) {};
        \node (9) at (-0.642,0.766) {};
        \draw (4) -- (5) -- (6) -- (7) -- (8) -- (9) -- (1) -- (3) -- (5) -- (7) -- (9) -- (2) -- (4) -- (6) -- (8) -- (1);
        \draw (4) -- (7) -- (1) -- (5) -- (9) -- (4) -- (8) -- (3) -- (7) -- (2) -- (6) -- (1);
        \draw (2) -- (5) -- (8) -- (2);
        \draw (3) -- (6) -- (9) -- (3);
    \end{tikzpicture}} & \makecell{$(K_{2,2} \sqcup K_1 \sqcup K_1 \sqcup K_1 \sqcup K_1 \sqcup K_1)^{\textup{c}}$} & \makecell{$2$} & \makecell{$\infty$} \\
    
    & \makecell{\begin{tikzpicture}[scale=0.55,every node/.style={circle,fill=black,inner sep=2pt}]
        \node (1) at (0,1) {};
        \node (2) at (0.642,0.766) {};
        \node (3) at (0.985,0.174) {};
        \node (4) at (0.866,-0.5) {};
        \node (5) at (0.342,-0.940) {};
        \node (6) at (-0.342,-0.940) {};
        \node (7) at (-0.866,-0.5) {};
        \node (8) at (-0.985,0.174) {};
        \node (9) at (-0.642,0.766) {};
        \draw (1) -- (2);
        \draw (3) -- (4) -- (5);
        \draw (6) -- (7) -- (8);
        \draw (9) -- (1) -- (3) -- (5) -- (7) -- (9) -- (2) -- (4) -- (6) -- (8) -- (1) -- (4) -- (7) -- (1) -- (5) -- (9) -- (4) -- (8) -- (3) -- (7) -- (2) -- (6) -- (1);
        \draw (2) -- (5) -- (8) -- (2);
        \draw (3) -- (6) -- (9) -- (3);
    \end{tikzpicture}} & \makecell{$K_{2,2,2,1,1,1}$}  & \makecell{$2$}  & \makecell{$\infty$} \\
    
    & \makecell{\begin{tikzpicture}[scale=0.55,every node/.style={circle,fill=black,inner sep=2pt}]
        \node (1) at (0,1) {};
        \node (2) at (0.642,0.766) {};
        \node (3) at (0.985,0.174) {};
        \node (4) at (0.866,-0.5) {};
        \node (5) at (0.342,-0.940) {};
        \node (6) at (-0.342,-0.940) {};
        \node (7) at (-0.866,-0.5) {};
        \node (8) at (-0.985,0.174) {};
        \node (9) at (-0.642,0.766) {};
        \draw (1) -- (2) -- (3);
        \draw (4) -- (5) -- (6) -- (7);
        \draw (8) -- (9) -- (1) -- (3) -- (5) -- (7) -- (9) -- (2) -- (4) -- (6) -- (8) -- (1) -- (4) -- (7) -- (1) -- (5) -- (9) -- (4) -- (8) -- (3) -- (7) -- (2) -- (6) -- (1);
        \draw (2) -- (5) -- (8) -- (2);
        \draw (3) -- (6) -- (9) -- (3);
    \end{tikzpicture}} & \makecell{$K_{2,2,1,1,1,1,1}$} & \makecell{$2$}  & \makecell{$\infty$} \\
    
    & \makecell{\begin{tikzpicture}[scale=0.55,every node/.style={circle,fill=black,inner sep=2pt}]
        \node (1) at (0,1) {};
        \node (2) at (0.642,0.766) {};
        \node (3) at (0.985,0.174) {};
        \node (4) at (0.866,-0.5) {};
        \node (5) at (0.342,-0.940) {};
        \node (6) at (-0.342,-0.940) {};
        \node (7) at (-0.866,-0.5) {};
        \node (8) at (-0.985,0.174) {};
        \node (9) at (-0.642,0.766) {};
        \draw (1) -- (2) -- (3) -- (4) -- (5);
        \draw (6) -- (7) -- (8) -- (9) -- (1) -- (3) -- (5) -- (7) -- (9) -- (2) -- (4) -- (6) -- (8) -- (1) -- (4) -- (7) -- (1) -- (5) -- (9) -- (4) -- (8) -- (3) -- (7) -- (2) -- (6) -- (1);
        \draw (2) -- (5) -- (8) -- (2);
        \draw (3) -- (6) -- (9) -- (3);
    \end{tikzpicture}} & \makecell{$K_{2,1,1,1,1,1,1,1}$} & \makecell{$2$} & \makecell{$\infty$} \\
    
    & \makecell{\begin{tikzpicture}[scale=0.55,every node/.style={circle,fill=black,inner sep=2pt}]
        \node (1) at (0,1) {};
        \node (2) at (0.642,0.766) {};
        \node (3) at (0.985,0.174) {};
        \node (4) at (0.866,-0.5) {};
        \node (5) at (0.342,-0.940) {};
        \node (6) at (-0.342,-0.940) {};
        \node (7) at (-0.866,-0.5) {};
        \node (8) at (-0.985,0.174) {};
        \node (9) at (-0.642,0.766) {};
        \draw (1) -- (2) -- (3) -- (4) -- (5) -- (6) -- (7) -- (8) -- (9) -- (1) -- (3) -- (5) -- (7) -- (9) -- (2) -- (4) -- (6) -- (8) -- (1) -- (4) -- (7) -- (1) -- (5) -- (9) -- (4) -- (8) -- (3) -- (7) -- (2) -- (6) -- (1);
        \draw (2) -- (5) -- (8) -- (2);
        \draw (3) -- (6) -- (9) -- (3);
    \end{tikzpicture}} & \makecell{$K_9$} & \makecell{$2$} &  \makecell{$\infty$} \\\bottomrule
    \caption{The bandwidth of all simple connected graphs on $1 \leq n \leq 9$ vertices that are $\{-1,0,1\}$-diagonalizable. Hadamard-diagonalizable graphs are those with $\{-1,1\}$-bandwidth equal to $1$. Weakly Hadamard-diagonalizable graphs are those with $\{-1,0,1\}$-bandwidth at most $2$.  An asterisk in the ``Name'' column is used to identify the $\{-1,0,1\}$-diagonalizable graphs that have $\{-1,0,1\}$-bandwidth strictly greater than $2$ or have $\{-1,0,1\}$-bandwidth at most 2 but were missed in the computer search performed in \cite{adm2021weakly}.}
    \label{tab:small_graphs}
\end{longtable}

It is interesting that every single graph on $9$ or fewer vertices that is $\{-1,0,1\}$-diagonalizable can be constructed using $K_1$ via the operations and theorems described earlier in the paper:
\begin{itemize}
    \item complementation (Proposition~\ref{prop:neg101properties}(a) and Theorem~\ref{thm:complement_of_neg_zero_one});
    
    \item disjoint union (Proposition~\ref{prop:neg101properties}(b)); and

    \item the Cartesian product (Proposition~\ref{prop:neg101properties}(c)).
\end{itemize}

\section{Generalization to sets other than $\{-1,0,1\}$ and $\{-1,1\}$}\label{sec:other_set_S}

Many of our definitions and results do not actually require that we work with the sets $\{-1,0,1\}$ or $\{-1,1\}$; we were simply motivated to study these sets based on recent work in the area. In this section, we briefly clarify which of our results can generalized to diagonalizations involving other sets of numbers.

\subsection{Generalizations of definitions}\label{sec:other_set_S_defn}

To start, we generalize Definition~\ref{defn:S_diag} as follows:

\begin{definition}\label{defn:S_diagS}
    Given a set $S \subseteq \C$, we say that a graph $G$ is \emph{$S$-diagonalizable} if there is a basis of eigenvectors for the Laplacian $L$ of $G$ whose entries all belong to $S$. Equivalently, $G$ is $S$-diagonalizable if there exists a matrix $P$, whose entries all belong to $S$, with the property that $P^{-1}LP$ is diagonal.
\end{definition}

We considered the cases $S = \{-1,0,1\}$ and $S = \{-1,1\}$ exclusively in the earlier sections of the paper. However, we also showed just prior to Definition~\ref{defn:S_diag} that every Laplacian integral graph is $S$-diagonalizable when $S = \mathbb{Z}$. 

We can also generalize the definition of a graph's $\{-1,0,1\}$- or $\{-1,1\}$-bandwidth (Definition~\ref{def:bandwidth}) as follows:

\begin{definition}\label{def:bandwidthS}
    Let $S \subseteq \C$ and let $G$ be a graph with Laplacian matrix $L$. 
    \begin{enumerate}
        \item[a)] If $G$ is not $S$-diagonalizable then the \emph{$S$-bandwidth} of $G$ is $\infty$.
        
        \item[b)] If $G$ is $S$-diagonalizable then the \emph{$S$-bandwidth} of $G$ is the smallest $k \in \mathbb{Z}_+$ for which there exists a matrix $P$ with all of the following properties:
        \begin{itemize}
            \item all of its entries belong to $S$;

            \item $P^{-1}LP$ is diagonal; and

            \item $P^TP$ has bandwidth $k$.
        \end{itemize}
    \end{enumerate}
\end{definition}

These notions of $S$-diagonalizability and $S$-bandwidth are not restricted to the real numbers. For example, taking $S$ to be the unit circle in the complex plane results in the graphs with $S$-bandwidth equal to $1$ being exactly the complex Hadamard diagonalizable graphs \cite{chan2020}. In this way, Definition~\ref{defn:S_diagS} includes several recently-investigated families of graphs as special cases.

Finally, we can also generalize our definition of balanced vectors (Definition~\ref{defn:totally_balanced}) to sets other than $\{-1,0,1\}$:

\begin{definition}\label{defn:totally_balancedS}
    Let $S \subseteq \C$. A vector $\mathbf{v} \in \C^p$ ($p \geq 2$) is \emph{$S$-balanced} if there exists a matrix $A \in \mathcal{M}_{p-1,p}$, all of whose entries belong to $S$, with $\mathrm{null}(A) = \mathrm{span}(\mathbf{v})$.
\end{definition}

For example, the vector $(2,1,1,1,1)$ is $\{-1,1\}$-balanced, as evidenced by the $\{-1,1\}$-matrix
\[
    A = \begin{bmatrix}
        1 & 1 & -1 & -1 & -1 \\
        1 & -1 & 1 & -1 & -1 \\
        1 & -1 & -1 & 1 & -1 \\
        1 & -1 & -1 & -1 & 1
    \end{bmatrix},
\]
which has $\mathrm{null}(A) = \mathrm{span}\big((2,1,1,1,1)\big)$.

\subsection{Generalizations of results}\label{sec:other_set_S_res}

To start, we generalize Proposition~\ref{prop:neg101properties} to arbitrary sets $S$ as follows:

\begin{proposition}\label{prop:neg101propertiesS}
    Let $S \subseteq \C$, and let $G$ and $H$ be graphs with $m$ and $n$ vertices, respectively.
   \begin{enumerate}
       \item[a)] If $G$ is connected and $S$-diagonalizable with bandwidth $k$ then its complement $G^{\textup{c}}$ is $S$-diagonalizable with bandwidth at most $k$.
       
       \item[b)] If $0 \in S$ and $G$ and $H$ are $S$-diagonalizable with bandwidths $k$ and $\ell$, respectively, then the disjoint union $G \sqcup H$ is $S$-diagonalizable with bandwidth $\max\{k,\ell\}$.
       
       \item[c)] If $S$ is closed under multiplication and $G$ and $H$ are $S$-diagonalizable with bandwidths $k$ and $\ell$, respectively, then their Cartesian product $G \mathbin{\square} H$ is $S$-diagonalizable with bandwidth at most $\min\{n(k-1)+\ell,m(\ell-1)+k\}$.
   \end{enumerate} 
\end{proposition}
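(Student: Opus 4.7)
The plan is to establish each part by producing an explicit $S$-valued matrix that diagonalizes the Laplacian in question and then track how its Gram matrix's bandwidth behaves.

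For part~(a), I would start with an $S$-valued matrix $P$ diagonalizing the Laplacian $L$ of $G$ with $P^T P$ of bandwidth $k$, and observe that the Laplacian of $G^{\textup{c}}$ is $L^{\textup{c}} = nI - L - J$. Since $G$ is connected, the kernel of $L$ is one-dimensional and spanned by $\mathbf{1}$, so exactly one column of $P$ is a nonzero scalar multiple of $\mathbf{1}$; every other column is an eigenvector of $L$ orthogonal to $\mathbf{1}$, hence is annihilated by $J$ and is therefore also an eigenvector of $L^{\textup{c}}$ (with eigenvalue shifted by $n$). Since $L^{\textup{c}}\mathbf{1} = \mathbf{0}$ as well, the same matrix $P$ diagonalizes $L^{\textup{c}}$, so $P^T P$ is literally unchanged and the bandwidth is at most $k$.

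For part~(b), given $S$-valued $P_G$ and $P_H$ diagonalizing $L_G$ and $L_H$ with Gram-matrix bandwidths $k$ and $\ell$, I would form the block-diagonal matrix
\[
    P = \begin{bmatrix} P_G & O \\ O & P_H \end{bmatrix},
\]
which is $S$-valued (this is exactly where the hypothesis $0 \in S$ is used, to fill the off-diagonal blocks) and which diagonalizes the block-diagonal Laplacian $L_{G \sqcup H} = L_G \oplus L_H$. A direct computation gives $P^T P = (P_G^T P_G) \oplus (P_H^T P_H)$, whose bandwidth is $\max\{k, \ell\}$.

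For part~(c), I would use the identity $L_{G \square H} = L_G \otimes I_n + I_m \otimes L_H$, which implies that $P := P_G \otimes P_H$ diagonalizes $L_{G \square H}$. Closure of $S$ under multiplication ensures that $P$ is $S$-valued. The identity $(P_G \otimes P_H)^T (P_G \otimes P_H) = (P_G^T P_G) \otimes (P_H^T P_H)$ then reduces the bandwidth question to a computation about Kronecker products: writing a Kronecker index difference as $r - c = (i_1 - j_1)n + (i_2 - j_2)$ and using the triangle inequality shows that $A \otimes B$ has bandwidth at most $n(k-1) + \ell$ whenever $A \in \mathcal{M}_m$ and $B \in \mathcal{M}_n$ have bandwidths $k$ and $\ell$ respectively. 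Using $P_H \otimes P_G$ in place of $P_G \otimes P_H$ yields the symmetric bound $m(\ell-1) + k$, and taking the minimum of the two gives the claim.

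The argument is largely routine bookkeeping; the only places to be careful are (i) in part~(a), recognizing that no new eigenvectors need to be constructed so that $P^T P$ is literally the same matrix in both cases; and (ii) in part~(c), correctly carrying out the Kronecker bandwidth calculation. The hypotheses $0 \in S$ in~(b) and closure under multiplication in~(c) are precisely what is needed for the natural constructions $P_G \oplus P_H$ and $P_G \otimes P_H$ to remain $S$-valued, so there is no substantive obstacle beyond these observations.
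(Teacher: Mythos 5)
Your proof is correct and takes essentially the same approach as the paper's: reuse the same matrix $P$ for the complement, the block-diagonal matrix for the disjoint union, and the Kronecker product $P_G \otimes P_H$ for the Cartesian product. The paper's version is terser --- it leaves the Gram-matrix bandwidth bookkeeping implicit (the Kronecker bandwidth bound is only stated in the discussion of the $\{-1,0,1\}$ special case earlier in the paper) --- whereas you carry those computations out explicitly.
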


The above proposition can be proved in an identical fashion to how analogous results were proved in \cite{adm2021weakly}; we just provide the generalized proofs here for completeness.

\begin{proof}[Proof of Proposition~\ref{prop:neg101propertiesS}]
    To prove~(a), let $L$ be the Laplacian of $G$ and note that the Laplacian of $G^{\textup{c}}$ is $(nI - \mathbf{1}\mathbf{1}^T) - L$. Since $G$ is connected, all eigenvectors of $L$ that are not multiples of $\mathbf{1}$ are orthogonal to $\mathbf{1}$. It follows that any matrix $P$ that diagonalizes $L$ also diagonalizes $(nI - \mathbf{1}\mathbf{1}^T) - L$.

    To prove~(b), simply notice that if $P$ and $Q$ diagonalize the Laplacians of $G$ and $H$, respectively, then the block matrix
    \begin{align}\label{eq:block_diag_with_zeros}
        \begin{bmatrix}
            P & O \\ O & Q
        \end{bmatrix}
    \end{align}
    diagonalizes the Laplacian of $G \sqcup H$.

    Finally, to prove~(c), we just note that if $P$ and $Q$ diagonalize the Laplacians of $G$ and $H$, respectively, then
    \[
        P \otimes Q
    \]
    diagonalizes the Laplacian of $G \mathbin{\square} H$.
\end{proof}

In part~(b), the hypothesis $0 \in S$ is required because the columns of the block diagonal matrix~\eqref{eq:block_diag_with_zeros} has zeroes in it. This is necessary; we noted earlier that property~(b) fails even when $S = \{-1,1\}$. In property~(c), it is perhaps worth noting that the restriction that $S$ is closed under multiplication is satisfied, for example, by the sets $S = \{-1,0,1\}$, $S = \{-1,1\}$, and $S = \{z \in \C : |z| = 1\}$, so this property applies to all cases of interest that we are aware of.

The only other result from before Section~\ref{sec:complements} that generalizes to other sets $S$ in any meaningful way is Proposition~\ref{prop:tot_bal_rescale}, which still works as long as $S \subseteq \mathbb{Z}$.

However, our main results concerning graph complements and joins in Section~\ref{sec:complements} do generalize straightforwardly to other sets $S$. In particular, Theorem~\ref{thm:complement_of_neg_zero_one} and Corollary~\ref{cor:join_neg_zero_one} can be generalized from $\{-1,0,1\}$-diagonalizability and balanced vectors to $S$-diagonalizability and $S$-balanced vectors with no substantial changes to their proofs:

\begin{theorem}\label{thm:complement_of_neg_zero_oneS}
    Let $S \subseteq \C$ and suppose that $G$ is an $S$-diagonalizable graph with $p$ connected components of sizes $v_1$, $v_2$, $\ldots$, $v_p$, respectively. Then $G^{\textup{c}}$ is $S$-diagonalizable if and only if the vector $(v_1,v_2,\ldots,v_p)$ is $S$-balanced. Furthermore, if $G$ has $S$-bandwidth $k$ then $G^{\textup{c}}$ has $S$-bandwidth at most $\max\{p-1,k\}$.
\end{theorem}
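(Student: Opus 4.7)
The plan is to follow the proof of Theorem~\ref{thm:complement_of_neg_zero_one} essentially verbatim, since the only facts about the set $\{-1,0,1\}$ used there were the definitions of $\{-1,0,1\}$-balanced and $\{-1,0,1\}$-diagonalizable, both of which generalize directly via Definition~\ref{defn:totally_balancedS} and Definition~\ref{defn:S_diagS}. First I would dispose of the $p=1$ case using Proposition~\ref{prop:neg101propertiesS}(a). Assume $p > 1$, let $L$ be the Laplacian of $G$, and observe that the Laplacian of $G^{\textup{c}}$ is $L^{\textup{c}} = nI - L - J$. The key observation is that any eigenvector of $L$ for a nonzero eigenvalue $\lambda$ is orthogonal to the 0-eigenspace of $L$, hence to $\mathbf{1}$, hence annihilated by $J$; it therefore remains an eigenvector of $L^{\textup{c}}$ with eigenvalue $n - \lambda$. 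The hypothesis that $G$ is $S$-diagonalizable supplies $S$-bases for each of these nonzero eigenspaces.

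Next I would analyze $L^{\textup{c}}$ on the $p$-dimensional 0-eigenspace of $L$. Order the vertices so that this space consists of block-constant vectors $\mathbf{v}_{\mathbf{c}} = (c_1 \mathbf{1}_{v_1}, \ldots, c_p \mathbf{1}_{v_p})$; then a direct calculation yields $L^{\textup{c}} \mathbf{v}_{\mathbf{c}} = n \mathbf{v}_{\mathbf{c}} - \bigl(\sum_j c_j v_j\bigr) \mathbf{1}$. Since $G^{\textup{c}}$ is connected, the 0-eigenspace of $L^{\textup{c}}$ is spanned by $\mathbf{1}$, and the remaining $(p-1)$ dimensions inside this block-constant subspace all lie in the $n$-eigenspace and are exactly the $\mathbf{v}_{\mathbf{c}}$ with $\sum_j c_j v_j = 0$. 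Therefore, producing an $S$-basis of eigenvectors for $L^{\textup{c}}$ reduces to producing $p-1$ linearly independent vectors $\mathbf{a}_1, \ldots, \mathbf{a}_{p-1} \in S^p$ with $\mathbf{a}_j \cdot (v_1, \ldots, v_p) = 0$; by Definition~\ref{defn:totally_balancedS}, such vectors exist precisely when $(v_1, \ldots, v_p)$ is $S$-balanced. This establishes the biconditional.

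The bandwidth claim follows from tracking orthogonality among the constructed eigenvectors of $L^{\textup{c}}$: the vector $\mathbf{1}$ is orthogonal to everything else, the $p-1$ new eigenvectors form a single $n$-eigenspace block whose Gram contributions stay inside that block, and the remaining eigenvectors inherit their mutual orthogonality pattern, and hence bandwidth at most $k$, from the given $S$-diagonalization of $L$. Combined, this yields $P^T P$ with bandwidth at most $\max\{1, p-1, k\} = \max\{p-1, k\}$. The only real subtlety is that the argument tacitly uses $\mathbf{1}$ as an eigenvector of $L^{\textup{c}}$, which requires $S$ to contain a nonzero scalar so that some multiple of $\mathbf{1}$ lies in $S^n$; this is a vacuous assumption whenever $S$-diagonalizability is nontrivial. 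I expect no substantive obstacle, as the proof is a transparent re-reading of the $\{-1,0,1\}$ argument with ``balanced'' replaced by ``$S$-balanced'' throughout.
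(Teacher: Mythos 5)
Your proposal is correct and takes essentially the same route as the paper: the paper proves Theorem~\ref{thm:complement_of_neg_zero_oneS} precisely by observing that the proof of Theorem~\ref{thm:complement_of_neg_zero_one} carries over with $S$ in place of $\{-1,0,1\}$ and ``$S$-balanced'' in place of ``balanced,'' which is exactly the argument you give (your added remark that $S$ must contain a nonzero scalar is a harmless observation that is automatic from $G$ being $S$-diagonalizable).
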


\begin{corollary}\label{cor:join_neg_zero_oneS}
    Let $S \subseteq \C$ have $0 \in S$ and let $p \in \mathbb{Z}_{+}$. Suppose that, for $1 \leq j \leq p$, $G_j$ is a connected $v_j$-vertex $S$-diagonalizable graph with $S$-bandwidth $k_j$. Then the join
    \[
        G_1^{\textup{c}} \vee G_2^{\textup{c}} \vee \cdots \vee G_p^{\textup{c}}
    \]
    is $S$-diagonalizable if and only if the vector $(v_1, v_2, \ldots, v_p)$ is $S$-balanced. Furthermore, it has $S$-bandwidth at most $\max\{p-1,k_1,k_2,\ldots,k_p\}$.
\end{corollary}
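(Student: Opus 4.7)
The proof will mirror the proof of Corollary~\ref{cor:join_neg_zero_one}, substituting the generalized ingredients from Section~\ref{sec:other_set_S_res} in place of their $\{-1,0,1\}$-specific counterparts. The plan is to exploit the identity
\[
    G_1^{\textup{c}} \vee G_2^{\textup{c}} \vee \cdots \vee G_p^{\textup{c}} = (G_1 \sqcup G_2 \sqcup \cdots \sqcup G_p)^{\textup{c}},
\]
so that the $S$-diagonalizability of the join reduces to the $S$-diagonalizability of a graph complement whose base is a controlled disjoint union.

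First I would apply Proposition~\ref{prop:neg101propertiesS}(b) to the $G_j$. This step is where the hypothesis $0 \in S$ is used: it lets us combine the diagonalizing matrices of the $G_j$ into a single block-diagonal matrix whose off-diagonal zero blocks lie in $S$. The resulting conclusion is that $G := G_1 \sqcup G_2 \sqcup \cdots \sqcup G_p$ is $S$-diagonalizable with $S$-bandwidth at most $\max\{k_1, k_2, \ldots, k_p\}$.

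Next I would feed $G$ into Theorem~\ref{thm:complement_of_neg_zero_oneS}. By construction, $G$ has exactly $p$ connected components, of sizes $v_1, v_2, \ldots, v_p$ respectively, so the theorem says that $G^{\textup{c}}$ (which equals the join in question) is $S$-diagonalizable if and only if the vector $(v_1, v_2, \ldots, v_p)$ is $S$-balanced. Moreover, it bounds the $S$-bandwidth of $G^{\textup{c}}$ by $\max\{p-1, \max\{k_1, \ldots, k_p\}\} = \max\{p-1, k_1, k_2, \ldots, k_p\}$, which is exactly the stated bound.

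There is essentially no obstacle here beyond verifying that the two generalized lemmas apply cleanly with $S$ in place of $\{-1,0,1\}$; the logical skeleton is the same as in the proof of Corollary~\ref{cor:join_neg_zero_one}. The only subtle point worth flagging in the write-up is the role of $0 \in S$, since without it the disjoint-union step fails and the argument cannot even get started.
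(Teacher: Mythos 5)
Your proposal is correct and follows exactly the route the paper intends: rewrite the join as $(G_1 \sqcup \cdots \sqcup G_p)^{\textup{c}}$, apply Proposition~\ref{prop:neg101propertiesS}(b) (using $0 \in S$) to the disjoint union, and then invoke Theorem~\ref{thm:complement_of_neg_zero_oneS}, which is precisely the argument of Corollary~\ref{cor:join_neg_zero_one} with the generalized ingredients substituted in. No gaps.
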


It is worth noting that the ``$0 \in S$'' hypothesis of Corollary~\ref{cor:join_neg_zero_oneS} really is necessary (which should not be surprising, since the proof of this corollary makes use of Proposition~\ref{prop:neg101propertiesS}(b), which requires $0 \in S$). For example, we noted earlier that the vector $(2,1,1,1,1)$ is $\{-1,1\}$-balanced, and it is also straightforward to show that $K_2$ and $K_1$ are each $\{-1,1\}$-diagonalizable. However, the graph
\[
    K_2^{\textup{c}} \vee K_1^{\textup{c}} \vee K_1^{\textup{c}} \vee K_1^{\textup{c}} \vee K_1^{\textup{c}} = K_{2,1,1,1,1}
\]
is not $\{-1,1\}$-diagonalizable, since the eigenspace corresponding to the eigenvalue $4$ of its Laplacian is $\mathrm{span}\{(1,-1,0,0,0,0)\}$, which does not contain a vector with entries from $\{-1,1\}$.

\section{Conclusion and future work}\label{sec:conclusions}

Motivated by the notion of weak Hadamard diagonalizable graphs introduced in \cite{adm2021weakly}, we studied graphs whose Laplacian matrix is diagonalized by matrices with all entries belonging to $\{-1,0,1\}$ or $\{-1,1\}$, with or without the assumption of orthogonality or quasi-orthogonality of its columns. 

Our work significantly clarifies numerous questions concerning $\{-1,0,1\}$-diagonalizability of graphs. For example, it was noted in the novel work \cite{adm2021weakly} that $K_{3,2,2,1}$ is $\{-1,0,1\}$-diagonalizable, despite none of the results therein really capturing ``why''. By contrast, our results (Corollary~\ref{cor:complete_multipartite_graph} in particular) makes this transparent: $K_{3,2,2,1}$ is $\{-1,0,1\}$-diagonalizable because the vector $(3,2,2,1)$ is balanced. However, our work has also raised numerous questions that we believe are worth exploring:

\begin{itemize}
    \item Is Conjecture~\ref{conj:hadamard_zero} true? This is a natural variant of the well-known Hadamard conjecture (i.e., Conjecture~\ref{conj:hadamard}). 

    \item All $\{-1,1\}$-diagonalizable graphs that we have found are regular. Are they indeed all regular? It was proved in \cite[Theorem~5]{BFK11} that this is true for Hadamard-diagonalizable graphs, but it is unclear whether or not the proof can be modified to work without orthogonality of the rows and/or columns of the $\{-1,1\}$-matrix.

    \item Related to the previous question, a brute-force computer search reveals that the only regular connected graph on $n = 10$ vertices that is $\{-1,1\}$-diagonalizable is the complete graph (refer back to Theorem~\ref{thm:complete_is_pm1}(b)). It thus is natural to ask whether or not the complete graph is the only connected graph that is $\{-1,1\}$-diagonalizable  when $n \equiv 2 \pmod{4}$.
\end{itemize}

\section*{Acknowledgements}
   The authors thank Karen Meagher providing insight into how the graphs in \cite[Appendix A]{adm2021weakly} were generated. The authors thank Luis Varona and Benjamin Talbot for finding and correcting some bandwidth calculation mistakes in Table~\ref{tab:small_graphs}.  N.J.\ thanks Margaret-Ellen Messinger   for helpful conversations. 
N.J.\ was supported by NSERC Discovery Grant number RGPIN-2022-04098. S.P.\ was supported by NSERC Discovery Grant number 1174582, the Canada Foundation for Innovation (CFI) grant number 35711, and the Canada Research Chairs (CRC) Program grant number 231250. 

\bibliographystyle{alpha}
\bibliography{ref}

\end{document}